\documentclass[letterpaper, 11pt]{amsart}
\pdfoutput=1
\usepackage{amsmath,amssymb,amsthm,pinlabel,tikz,hyperref,mathrsfs,color, thmtools}
\usepackage{versions}
\usepackage{fullpage}
\DeclareSymbolFontAlphabet{\amsmathbb}{AMSb}%
\usepackage{verbatim}
\usepackage{float}
\usepackage{caption}
\usepackage{subcaption}
\usepackage{enumitem}
\usepackage{tikz-cd}
\newcommand{\nc}{\newcommand}
\nc{\dmo}{\DeclareMathOperator}
\dmo{\ra}{\rightarrow}
\dmo{\Prob}{\mathbb{P}}
\dmo{\E}{\mathbb{E}}
\dmo{\N}{\mathbb{N}}
\dmo{\Z}{\mathbb{Z}}
\dmo{\Q}{\mathbb{Q}}
\dmo{\R}{\mathbb{R}}
\dmo{\C}{\mathcal{C}}
\dmo{\X}{\mathcal{X}}
\dmo{\U}{\mathcal{U}}
\dmo{\T}{\mathcal{T}}
\dmo{\F}{\mathcal{F}}
\dmo{\AC}{\mathcal{AC}}
\dmo{\MIN}{\mathcal{MIN}}
\dmo{\Mod}{Mod}
\dmo{\w}{\omega}
\dmo{\PMod}{PMod}
\dmo{\PMF}{\mathcal{PMF}}
\dmo{\Mat}{Mat}
\dmo{\supp}{supp}
\dmo{\UE}{\mathcal{UE}}
\dmo{\vol}{vol}
\dmo{\B}{B}
\dmo{\PB}{PB}
\dmo{\PR}{PSL(2,\mathbb{R})}
\dmo{\GL}{GL(k, \mathbb{C})}
\dmo{\SL}{SL(2, \mathbb{Z})}
\dmo{\Isom}{Isom}
\dmo{\RP}{\mathbb{R} \mathrm{P}}
\dmo{\I}{\mathcal{I}}
\dmo{\el}{\ell_{\C}}
\dmo{\NN}{\mathcal{N}}
\dmo{\rk}{rank}
\dmo{\tr}{tr}
\dmo{\llangle}{\langle\langle}
\dmo{\rrangle}{\rangle\rangle}
\dmo{\Unif}{Unif}
\dmo{\Out}{Out}
\dmo{\diam}{\operatorname{diam}}
\dmo{\Aut}{\operatorname{Aut}}
\dmo{\sumRho}{\mathscr{B}}
\dmo{\stopping}{\vartheta}
\dmo{\diffPivot}{\mathcal{P}}
\dmo{\diffEvenPivot}{\mathcal{Q}}
\dmo{\varGam}{\Upsilon}
\dmo{\prodSeq}{\Pi}
\dmo{\NSupp}{N_{supp}}
\dmo{\KSleep}{\mathnormal{K_{sleep}}}
\dmo{\Devi}{\upsilon}
\dmo{\DeviStr}{\iota}
\dmo{\DeviDisc}{\varrho}
\dmo{\DeviUni}{\varsigma}
\dmo{\wVar}{\check{Z}}
\dmo{\muVar}{\eta}
\dmo{\sublinear}{\Delta}
\dmo{\pivotComplete}{\tilde{\mathrm{S}}}
\dmo{\pivotRW}{\mathcal{S}}
\dmo{\EvenPivot}{Q}
\dmo{\Proj}{Pr}
\dmo{\axes}{\mathbf{Y}}

\usetikzlibrary{decorations.markings, patterns}
\usetikzlibrary{decorations.pathmorphing}
\usetikzlibrary{calc, arrows, positioning}
\tikzset{->-/.style={decoration={
  markings,
  mark=at position #1 with {\arrow{>}}},postaction={decorate}}}

\setcounter{tocdepth}{1}

\nc{\nt}{\newtheorem}
\nt{theorem}{Theorem}

\newtheorem{thm}{{\bf Theorem}}[section]
\newtheorem{definition}[thm]{Definition}
\newtheorem{lem}[thm]{{\bf Lemma}}
\newtheorem{cor}[thm]{{\bf Corollary}}
\newtheorem{conv}[thm]{{\bf Convention}}

\newtheorem{prop}[thm]{{\bf Proposition}}
\newtheorem{fact}[thm]{Fact}

\newtheorem{remark}[thm]{Remark}

\newtheorem{obs}[thm]{Observation}

\title[Random walks and contracting elements I]{Random walks and contracting elements I: Deviation inequality and Limit laws}

\date{\today}
\author{Inhyeok Choi}
\email{%
        inhyeokchoi@kias.re.kr
        }
\address{%
		June E Huh Center for Mathematical Challenges, KIAS\\
		85 Hoegiro Dongdaemun-gu, Seoul 02455, Republic of Korea
}

\begin{document}

\begin{abstract}
We study random walks on metric spaces with contracting isometries. In this first article of the series, we establish sharp deviation inequalities by adapting Gou{\"e}zel's pivotal time construction. As an application, we establish the exponential bounds for deviation from below, central limit theorem, law of the iterated logarithms and the geodesic tracking of random walks on mapping class groups and CAT(0) spaces.

\noindent{\bf Keywords.} Random walk, CAT(0) space, Mapping class group, Central limit theorem, Geodesic tracking

\noindent{\bf MSC classes:} 20F67, 30F60, 57M60, 60G50
\end{abstract}

\maketitle

\tableofcontents

\section{Introduction} \label{section:intro}


This is the first in the series of articles concerning random walks on metric spaces with contracting elements. This series is a reformulation of the preprint \cite{choi2022limit} announced by the author, aiming for a more concise and systematic exposition.

Let $G$ be a countable group of isometries of a metric space $(X, d)$ with basepoint $o \in X$. We consider the random walk generated by a probability measure $\mu$ on $G$, which entails the product $Z_{n} = g_{1} \cdots g_{n}$ of independent random isometries $g_{i}$'s chosen with law $\mu$. We are interested in the asymptotic behavior of a random path $(Z_{n})_{n >0}$ seen by $X$, or in other words, the behavior of a random \emph{orbit path} $(Z_{n} o)_{ n >0}$ on $X$. For instance, we can ask the following questions:
\begin{itemize}
\item Does the random variable $\frac{1}{n} d(o, Z_{n} o)$ converge to a constant almost surely?
\item Does the random variable $\frac{1}{\sqrt{n}} d(o, Z_{n} o)$ converge in law to a Gaussian law?
\item How fast does $\Prob\left(an \le d(o, Z_{n} o) \le bn\right)$ decay for $0 \le a \le b$?
\end{itemize}
These questions are associated with the so-called \emph{moment conditions}. For each $p>0$ we define the $p$-th moment of $\mu$ by \[
\E_{\mu} [d(o, go)^{p}] = \int_{G} d(o, go)^{p} \, d\mu,
\]
and the exponential moment (with a parameter $K>0$) of $\mu$ by \[
\E_{\mu} \operatorname{exp}(K d(o, go)) = \int_{G} e^{K d(o, go)} \, d\mu.
\]
In the classical setting of $X = \mathbb{R}$, the previous three questions are answered when $\mu$ has finite first moment, finite second moment and finite exponential moment, respectively.

A particularly interesting examples come from isometric actions on non-positively curved spaces. This setting includes Gromov hyperbolic groups (\cite{benoist2016central}, \cite{boulanger2022large}, \cite{gouezel2022exponential}); relatively hyperbolic groups (\cite{sisto2017tracking}, \cite{qing2020sublinearly}); groups with nontrivial Floyd boundary (\cite{gekhtman2021martin}); the mapping class group of a finite-type hyperbolic surface acting on Teichm{\"u}ller space (\cite{kaimanovich1996poisson}, \cite{horbez2018clt}, \cite{dahmani2018spectral}, \cite{baik2021linear}) or the curve complex (\cite{maher2010linear}, \cite{maher2010heegaard}, \cite{maher2011random}); the outer automorphism group of a finite-rank free group acting on the Culler-Vogtmann Outer space (\cite{horbez2018clt}, \cite{dahmani2018spectral}) and the free factor complex; groups acting on CAT(0) spaces (\cite{karlsson1999ergodic}, \cite{karlsson2006lln}, \cite{fernos2018the-furstenberg-poisson}, \cite{le-bars2022random}, \cite{le-bars2022central}). 

In this paper, we propose a unified theory for random walks on the aforementioned spaces. We first study the case where $X$ possesses strongly contracting isometries (see Convention \ref{conv:strong}), and $\mu$ is non-elementary (see Subsection \ref{subsection:RW}). This condition is mild enough to cover all the aforementioned spaces (except for Outer space, which will be studied carefully in \cite{choi2022random3} due to the asymmetry issue). At the same time, this is just the right amount of restriction that leads to limit laws under optimal moment conditions.

We also present a parallel theory for metric spaces with weakly contracting isometries (see Convention \ref{conv:weak}). As a result, we obtain limit laws on hierarchically hyperbolic groups (HHGs) with optimal moment conditions. We describe the case of mapping class group for concreteness.

\begin{theorem} \label{thm:expBdMod}
Let $G$ be the mapping class group of a finite-type surface, let $d$ be a word metric on $G$, let $(Z_{n})_{n\ge0}$ be the random walk generated by a non-elementary probability measure $\mu$ on $G$, and let \[
\lambda = \lambda(\mu) := \lim_{n \rightarrow \infty} \frac{1}{n} \E [d(id, Z_{n})]
\]
be the drift of $\mu$ on $G$. Then for each $0 < L < \lambda$, the probability $\Prob (d(id, Z_{n}) \le Ln)$ decays exponentially as $n$ tends to infinity.
\end{theorem}

This is an analogue of the result of Gou{\"e}zel \cite[Theorem 1.3]{gouezel2022exponential}, who established the exponential bound for Gromov hyperbolic spaces. Note that, for every admissible probability measure $\mu$ on the mapping class group $G$, the spectral radius of $\mu$ is strictly smaller than $1$ due to the non-amenability of $G$ \cite{kesten1959full}. Combining this with the exponential growth of $G$, one can obtain $L>0$ for which $\Prob( d(id, Z_{n}) \le Ln)$ decays exponentially. Hence, the nontrivial part of Theorem \ref{thm:expBdMod} is that $L$ can be as close to $\lambda$ as we want.

We also obtain the deviation inequalities with optimal moment conditions (see Proposition \ref{prop:deviation}). Combining this with Mathieu-Sisto's theory \cite{mathieu2020deviation}, we establish the central limit theorem (CLT) and law of the iterated logarithms (LIL) on mapping class groups.

\begin{theorem}\label{thm:CLT}
Let $G$ be the mapping class group of a finite-type hyperbolic surface, let $d$ be a word metric on $G$, and let $(Z_{n})_{n\ge 0}$ be the random walk generated by a non-elementary probability measure $\mu$ on $G$ with finite second moment. Then there exists $\sigma(\mu)\ge 0$ such that $\frac{1}{\sqrt{n}} (d(id, Z_{n}) - n \lambda(\mu))$ converges in law to the Gaussian law $\mathcal{N}(0, \sigma(\mu))$ of variance $\sigma(\mu)^{2}$, and moreover, \[
\limsup_{n \rightarrow \infty}  \frac{d(id, Z_{n}) - n\lambda(\mu)}{\sqrt{2n \log \log n}}  = \sigma(\mu) \quad \textrm{almost surely.}
\]
\end{theorem}

In acylindrically hyperbolic groups, Mathieu and Sisto established CLT for random walks with finite exponential moment (\cite[Theorem 13.4]{mathieu2020deviation}). We strengthen their result by weakening the moment condition.

Lastly, we address the geodesic tracking of random paths.

\begin{theorem} \label{thm:tracking}
Let $G$ be the mapping class group of a finite-type surface, let $d$ be a word metric on $G$, and let $(Z_{n})_{n\ge 0}$ be the random walk generated by a non-elementary measure $\mu$ on $G$.

\begin{enumerate}
\item Let $p>0$ and suppose that $\mu$ has finite $p$-th moment. Then for almost every sample path $(Z_{n})_{n\ge 0}$, there exists a geodesic $\gamma$ on $G$ such that \[
\lim_{n \rightarrow \infty} \frac{1}{n^{1/p}} d(Z_{n}, \gamma) = 0.
\]
\item If $\mu$ has finite exponential moment, then there exists $K < \infty$ such that the following holds. For almost every sample path $(Z_{n})_{n\ge 0}$, there exists a geodesic $\gamma$ on $G$ such that \[
\limsup_{n \rightarrow \infty} \frac{1}{\log n} d(Z_{n}, \gamma) < K.
\]
\end{enumerate}
\end{theorem}

For finitely supported random walks, Sisto established the deviation rate $d(Z_{n}, \gamma)  = O(\sqrt{n \log n})$ (\cite[Theorem 1.2]{sisto2017tracking}). Later, Qing, Rafi and Tiozzo obtained the rate $d(Z_{n}, \gamma) = O(\log^{3g - 3 + b}(t))$, where $g$ and $b$ denote the genus and the number of punctures of the surface (\cite[Theorem C]{qing2020sublinearly}). We refine these results by suggesting the deviation rate $O(\log(t))$ for random walks with finite exponential moment. 

In full generality, the main results  hold in the setting of Convention \ref{conv:strong} and Convention \ref{conv:weak}. In particular, Theorem \ref{thm:expBdMod}, \ref{thm:CLT} and \ref{thm:tracking} apply to random walks on rank-1 CAT(0) spaces. This extends the author's previous work \cite{choi2023central} that deals with Gromov hyperbolic spaces and Teichm{\"u}ller space, and recovers several results by Le Bars \cite{le-bars2022random}, \cite{le-bars2022central}.

To obtain the main theorems, we blend the pioneering theories due to  Gou{\"e}zel \cite{gouezel2022exponential} and due to Mathieu and Sisto \cite{mathieu2020deviation}. Gou{\"e}zel's method effectively captures the alignment of the orbit path on $X$(see Subsection \ref{subsection:pivotState}), while Mathieu-Sisto's technique provides the desired limit theorems when appropriate deviation inequalities are given. Both of these theories rely on the Gromov hyperbolicity of the ambient space. Our contribution is to replace the Gromov hyperbolicity with weaker notion of hyperbolicity. In particular, we obtain large deviation principle, CLT and geodesic tracking on (possibly non-proper) CAT(0) spaces. Moreover, we generalize Mathieu-Sisto's theory by lifting the moment condition, leading to the exponential bounds for the escape to infinity and CLT for random walks without finite exponential moment.

\subsection{Context} \label{subsection:context}

Random walks on groups have often been studied via their actions on Gromov hyperbolic spaces. For instance, random walks on Teichm{\"u}ller space and Outer space have been understood by coupling them with the curve complex and the free factor complex, respectively (\cite{horbez2018clt}, \cite{dahmani2018spectral}). A similar strategy was recently pursued for proper CAT(0) spaces by Le Bars (\cite{le-bars2022random}, \cite{le-bars2022central}), building upon a new hyperbolic model for CAT(0) spaces (\cite{petyt2024hyperbolic}).

These strategies eventually depend on the following ingredients: \begin{itemize}
\item the non-atomness of the stationary measure on the Gromov boundary (\cite[Proposition 5.1]{maher2018random});
\item CLT for martingales arising from Busemann cocycles (\cite[Theorem 4.7]{benoist2016central});
\item linear progress with exponential decay (\cite[Theorem 1.2]{maher2012exp}), or
\item linear progress using the acylindricity of the action (\cite[Theorem 9.1, Proposition 9.4]{mathieu2020deviation}).
\end{itemize}
The first two items require a nice (e.g., compact) boundary structure of $X$. These boundary structures are also available in some class of non-Gromov-hyperbolic spaces (such as Teichm{\"u}ller space, Outer space and finite-dimensional CAT(0) cube complices  -- see \cite{fernos2018the-furstenberg-poisson}, \cite{fernos2018the-furstenberg-poisson}, \cite{fernos2024contact}) but are hard to come by in the general case. 

To establish the third item, Maher considered a stopping time that arises when a random path penetrates nested shadows, which relies on moment conditions: see \cite{maher2012exp} and \cite{sunderland2020linear}. For the last item, Mathieu and Sisto assumed finite exponential moment condition to couple the random paths on $G$ with the corresponding paths on $G$ in probability.

It is not straightforward to apply the aforementioned strategies to, say, random walks on (non-proper) CAT(0) spaces. Even in well-known settings such as Gromov hyperbolic groups, moment conditions are often necessary. Our goal is to lift these restrictions: we want a structure for random walks on a wide class of spaces $X$ that: \begin{itemize}
\item does not assume global Gromov hyperbolicity of $X$;
\item does not rely on any boundary structure of $X$;
\item does not assume any moment condition \emph{a priori}, and
\item effectively captures the `alignment' of a sample path on $X$.
\end{itemize}

The first goal was studied by Sisto in \cite{sisto2018contracting}. Not assuming global Gromov hyperbolicity of $X$, Sisto presented a random walk theory using strongly contracting isometries, which are found in both Gromov hyperbolic spaces and CAT(0) spaces. Note that the existence of strongly contracting isometries also has implications on the growth problem and counting problem (\cite{arzhantseva2015growth}, \cite{yang2014growth}, \cite{yang2019statistically}, \cite{yang2020genericity}, \cite{legaspi2022constricting} and \cite{coulon2022patterson}).

The second goal was pursued by Mathieu and Sisto for acylindrically hyperbolic groups in \cite{mathieu2020deviation}, establishing deviation inequalities without referring to the boundary of $X$.

The first and the second goals were also pursued by Boulanger, Mathieu, Sert and Sisto in \cite{boulanger2022large}. They discuss Corollary \ref{cor:LDPMod} for Gromov hyperbolic spaces and pointed out the versatility of Schottky sets in other spaces. For more detail, see Section \ref{section:LDP}.

All the goals except the first one were achieved in Gou{\"e}zel's recent paper \cite{gouezel2022exponential}. In \cite{gouezel2022exponential}, Gou{\"e}zel establishes Theorem \ref{thm:expBdMod} for Gromov hyperbolic spaces by recording the Schottky directions aligned along a random path. Such a recording, called the set of pivotal times, grows linearly with exponential decay. More importantly, this growth is uniform and \emph{is independent of the intermediate non-Schottky steps}. 

Our theory achieves the 4 goals in the setting of Convention \ref{conv:strong}. For this purpose, we combine Gou{\"e}zel's pivotal time construction with Sisto's theory of random walks involving strongly contracting isometries. This was indirectly pursued for Teichm{\"u}ller space in \cite{choi2023central}. Our usage of strongly contracting isometries is also hugely influenced by Yang's series of papers (\cite{yang2014growth}, \cite{yang2019statistically}, \cite{yang2020genericity}) in the context of counting problems.

Although strongly contracting isometries are found in various groups, it is not known whether the Cayley graph of a mapping class group possesses strongly contracting isometries. A related issue  arises when one considers a group $G$ that is quasi-isometric to another group $H$. Having a strongly contracting isometry is not passed through quasi-isometries: it is even not preserved under the change of finite generating set of a group \cite[Theorem 4.19]{arzhantseva2019negative}.

This is why we provide a parallel theory in the language of weakly contracting isometries. We note that having a weakly contracting \emph{infinite quasigeodesic} is stable under quasi-isometry. Strictly speaking, our setting is not stable under quasi-isometry: we consider two coarsely equivariant $G$-actions, one involving weak contraction and the other one involving strong contraction. Nevertheless, the present theory is an attempt towards QI-invariant random walk theory. We record recent breakthrough in this direction by Goldborough and Sisto \cite{goldsborough2021markov}, showing that certain QI-invariant group-theoretic property (that involves an action on a hyperbolic space) guarantees a CLT for simple random walks.

\subsection{Strategy} \label{subsection:strategy}
Morally, contracting directions constitute a tree-like structure. As a toy model, consider  \[
G = F_{2} \ast \Z^{2}= \langle a, b, c, d \, |\,  cd = dc \rangle
\] acting on its Cayley graph $X$. A geodesic $\gamma = abaaba$ in $X$ is composed of edges $e_{1} = [id, a]$, $e_{2} = [a, ab]$, $e_{3} = [ab, aba]$ and so on. The geodesicity of $\gamma$ forces the local alignment among $e_{i}$'s: $e_{i}$ projects onto $e_{i+1}$ at the beginning point of $e_{i+1}$ and $e_{i+1}$ projects onto $e_{i}$ at the ending point of $e_{i}$. Conversely, this local alignment implies that $\gamma$ is geodesic. (This is false when $e_{i}$'s are directions in a flat, e.g., $e_{1} = [id, c]$, $e_{2}  = [c, cd]$ and $e_{3} = [cd, cdc^{-1}]$.) The same conclusion holds even if we insert edges in the flats in between $e_{i}$'s. For example, consider \[
e_{1} = [c, ca], \,\,e_{2} = [cacd, cacdb^{2}], \,\,g = cacdb^{2} cd.
\]
Observe that $(id, e_{1})$, $(e_{1}, e_{2})$ and $(e_{2}, g)$ satisfy the local alignment conditions. This forces that $e_{1}$ and $e_{2}$ are subsegments of any geodesic between $id$ and $g$ even if such a geodesic is not unique due to flat parts. We will formulate this more precisely in the alignment lemma in Section \ref{section:strongAlign}.

We will then construct many independent ``tree-like'' directions. In our example, the set \[
S_{M, m} = \{(s_{1}s_{2} \cdots s_{M})^{m} : s_{i} \in \{a, b\} \}
\]
consists of $2^{M}$ directions in the free factor. We have the following property: \begin{enumerate}
\item For any $x \in X$, $d(id, [x, s^{\pm 1}])< M$ for all but at most 1 element $s \in S_{M, m}$.
\item For all $s \in S_{M, m}$, the geodesic $[s^{-1}, s]$ passes through $id$.
\end{enumerate}
This property will be captured by the notion of Schottky sets (Definition \ref{dfn:Schottky}). Note that one can increase the cardinality of $S_{M, m}$ by taking larger $M$.

Let us now consider the random walk $Z$ generated by a probability measure $\mu$ with $\mu(a), \mu(b) > 0$. Then for any $M, m>0$, each element of the Schottky set $S_{M, m}$ is admitted by $\mu^{\ast Mm}$. By decomposing $\mu^{\ast Mm}$ into a uniform measure on $S_{M, m}$ and the remainder, a random path $(Z_{n})_{n}$ can be modelled by the concatenation of some non-Schottky isometries $w_{i}$'s and Schottky isometries $s_{i}$'s, where the timing for Schottky progresses are given by a renewal process. That means, for a large $K$, a random word $Z_{n} = g_{1} \cdots g_{n}$ is of the form \[
Z_{n} = w_{0} s_{1} w_{1} \cdots s_{n/K} w_{n/K},
\]
where $s_{i}$'s are drawn from $S_{M, m}$. Now Gou{\"e}zel's construction of pivotal times provides a large $K'$ such that the following holds: among $\{1, \ldots, n/K\}$, we can pick indices $i(1) < \ldots < i(n/KK')$ at which the Schottky segment is aligned along the entire progress, i.e., $w_{0}s_{1} \cdots w_{i(k)} [id, s_{i(k)}]$'s are subsegments of $[id, Z_{n}]$ $(\ast$). Now pick $x \in X$. We have plenty of Schottky isometries available for the slot $s_{i(k)}$'s. By choosing the right choice among them (i.e., by pivoting), we can also assure that $(x, w_{0}s_{1} \cdots w_{i(k)} [id, s_{i(k)}])$ is aligned. Combined with $(\ast$), this means that we have a bound of $d(id, [x, Z_{n}])$ in terms of an initial subsegment $w_{0}s_{1} \cdots w_{i(k)}$ of the random path. All these phenomena are exponentially generic (see Lemma \ref{lem:Devi}). We subsequently obtain deviation inequalities (Proposition \ref{prop:deviation}), central limit theorem and geodesic tracking. A more involved combinatorial model for random paths leads to the large deviation principle.

In this example, the contracting property of a tree-like edge $e$ is as strong as possible: any geodesic $\gamma$ connecting the left and the right of $e$'s passes through $e$. We study two variants of such a contracting property. If we require that $\gamma$ passes through a bounded neighborhood of $e$, we say that $e$ is strongly contracting. If we require that $\gamma$ passes through a $\log(\diam(e))$-neighborhood of $e$, than we say that $e$ is weakly contracting. The argument so far also works for strongly contracting directions, up to a finite error. A more delicate argument is required for weakly contracting isometries. We will deal with these notions  in Part \ref{part:strong} and Part \ref{part:weak}, respectively.

\subsection*{Acknowledgments}
The author thanks Hyungryul Baik, Kunal Chawla, Ilya Gekhtman, Vivian He, Sang-hyun Kim, Joseph Maher, Hidetoshi Masai, Yulan Qing, Kasra Rafi,  Samuel Taylor, Giulio Tiozzo and Wenyuan Yang for helpful discussions. The author is indebted to the anonymous referee's helpful and careful comments. The author is also grateful to the American Institute of Mathematics and the organizers and the participants of the workshop ``Random walks beyond hyperbolic groups'' in April 2022 for helpful and inspiring discussions.

The author is supported by Samsung Science \& Technology Foundation (SSTF-BA1702-01 and SSTF-BA1301-51) and by a KIAS Individual Grant (SG091901) via the June E Huh Center for Mathematical Challenges at KIAS. This work constitutes part of the author's PhD thesis.

\section{Preliminaries} \label{section:prelim}

Before entering Part \ref{part:strong}, we review basic notions and lemmata. We fix a metric space $(X, d)$ and a basepoint $o \in X$. For $x, y , z \in X$, we define the \emph{Gromov product} of $x$ and $z$ with respect to $y$ by \[
(x, z)_{y} := \frac{1}{2} \big(d(x, y) + d(y, z) - d(x, z)\big).
\]

\subsection{Paths} \label{subsection:path}

Let $A$ and $B$ be subsets of $X$. \emph{$A$  is $K$-coarsely contained in $B$} if $A$ is contained in the $K$-neighborhood of $B$. $A$ and $B$ are \emph{$K$-coarsely equivalent} if $A$ is $K$-coarsely contained in $B$ and vice versa. $A$ is \emph{$K$-coarsely connected} if for every $x, y \in A$ there exists a chain $x = a_{0}, a_{1}, \ldots, a_{n} = y$ of points in $A$ such that $d(a_{i}, a_{i+1}) \le K$ for each $i$.

A \emph{path} on $X$ is a map $\gamma : I \rightarrow X$ from a $1$-coarsely connected subset $I$ of $\mathbb{R}$, called a domain, to $X$. A \emph{subdomain} $J$ of $I$ is of the form $I \cap [a, b]$ for some $a, b \in \mathbb{R}$. The restriction of $\gamma$ on $J$ is called a \emph{subpath} of $\gamma$. We denote this subpath by $\gamma|_{[a, b]}$.

For paths $\gamma : I \rightarrow X$ and $\gamma' : I' \rightarrow X$, we say that $\gamma'$ is a \emph{reparametrization of $\gamma$} when there exists a non-decreasing map $\rho : I' \rightarrow I$ such that $\gamma' = \gamma \circ \rho$. We say that two paths $\kappa : I \rightarrow X$ and $\eta : J \rightarrow X$ are \emph{$K$-fellow traveling} if there exists a reparametrization $\kappa' : J \rightarrow X$ of $\kappa$ such that $d(\kappa'(t), \eta(t)) \le K$ for every $t \in J$. In this case, note that the images of $\kappa$ and $\eta$ are within Hausdorff distance $K$ and the endpoints of $\kappa$ and $\eta$ are pairwise $K$-near. By abuse of notation, for a path $\kappa : I \rightarrow X$, $\kappa$ will often refer to the set-theoretical image $\kappa(I)$ of $\kappa$. For instance, when we say that a path $\kappa: I \rightarrow X$ is $K$-close to a point $x$, it means $d(\kappa(t), x) < K$ for some $t \in I$.

We say that $X$ is \emph{geodesic} if for each pair of points $x, y \in X$ there exists a geodesic connecting $x$ to $y$. Given two points $x, y \in X$, we denote by $[x, y]$ an arbitrary geodesic connecting $x$ to $y$. 

Let $[x, y]$ be a geodesic on $X$ and $A_{1}, \ldots, A_{N}$ be subsets of $[x, y]$. We say that $A_{1}, \ldots, A_{N}$ are \emph{in order from left to right} if $d(x, x_{1}) \le d(x, x_{2}) \le \ldots \le d(x, x_{N})$ for any choices of $x_{i} \in A_{i}$.

We will construct a path for a sequence of isometries as follows. Given a sequence $\alpha = (\phi_{1}, \ldots, \phi_{k})$ of isometries of $X$, we denote the product of its entries $\phi_{1} \cdots \phi_{k}$ by $\prodSeq(\alpha)$. Now let  \[
x_{mk + i} := \Pi(s)^{m}\phi_{1} \cdots \phi_{i} o= (\phi_{1} \cdots \phi_{k})^{m} \phi_{1} \cdots \phi_{i} o
\] for each $m \in \Z$ and $i = 0, \ldots, k-1$; see Figure \ref{fig:seqOrbit}. We let  $\Gamma^{m}(\alpha) := (x_{0}, x_{1}, \ldots, x_{mk})$ when $m \ge 0$ and $\Gamma^{m}(\alpha) := (x_{0}, x_{-1}, \ldots, x_{mk})$ when $m <0$. For $m = \pm 1$, we also use a simpler notation \[\begin{aligned}
\Gamma^{+}(s) &:= (x_{0},x_{1}, \ldots, x_{k}), \\
\Gamma^{-}(s)&:= (x_{0},x_{-1}, \ldots, x_{-k}).
\end{aligned}
\]
In other words, we write:
\[\begin{aligned}
 \Gamma^{+}(\phi_{1}, \ldots, \phi_{k}) &:= (o, \,\,\phi_{1} o, \quad\phi_{1} \phi_{2} o,\quad \quad\ldots, \,\,\phi_{1}\phi_{2} \cdots \phi_{k} o), \\
\Gamma^{-}(\phi_{1}, \ldots, \phi_{k}) &:= (o, \,\,\phi_{k}^{-1} o,\,\, \phi_{k}^{-1} \phi_{k-1}^{-1} o,\,\, \ldots, \,\,\phi_{k}^{-1} \cdots \phi_{1}^{-1} o).
\end{aligned}
\]
Given a path $\gamma = (y_{1}, \ldots, y_{N})$, we denote by $\bar{\gamma}$ its \emph{reversal}, defined by \[
\bar{\gamma} := (y_{N}, \ldots, y_{1}).
\] For example, the reversal of $\Gamma^{-}(\alpha)$ is denoted by $\bar{\Gamma}^{-}(\alpha)$, which is \[\begin{aligned}
\bar{\Gamma}^{-}(\phi_{1},\ldots, \phi_{k}) := &\,\,(x_{-k}, x_{-(k-1)}, \ldots, x_{0}) \\
=&\,\,
(\phi_{k}^{-1} \cdots \phi_{1}^{-1} o, \,\,\ldots, \,\, \phi_{k}^{-1} \phi_{k-1}^{-1} o, \,\, \phi_{k}^{-1} o, \,\, o).
\end{aligned}
\]

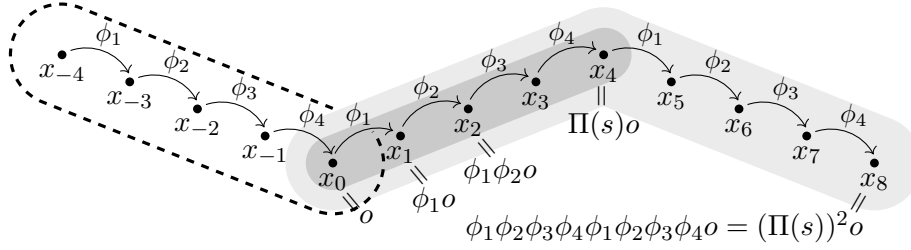
\begin{figure}
\begin{tikzpicture}[scale=0.9]
\def\c{0.7}
\def\ce{0.75}
\def\b{0.4}

\fill[black!8, rotate=21.74] (0, -\c) -- (4.32, -\c) arc (-90:90: \c) -- (0, \c) arc (90:270:\c);
\fill[black!8, shift={(8, 0)}, rotate=180-21.74] (0, -\c) -- (4.32, -\c) arc (-90:90: \c) -- (0, \c) arc (90:270:\c);

\draw[dashed, very thick,  rotate=180-21.74] (0.3, -\ce) -- (4.32, -\ce) arc (-90:90: \ce) -- (0, \ce) arc (90:240:\ce);

\fill[opacity=0.14, rotate=21.74] (0, -\b) -- (4.32, -\b) arc (-90:90: \b) -- (0, \b) arc (90:270:\b);

\fill (-4, 1.6) circle (0.065);
\foreach \i in {1, 2, 3, 4}{
\fill (1*\i - 4, 1.6 - 0.4*\i) circle (0.065);
\fill (1*\i, 0.4*\i) circle (0.065);
\fill (1*\i + 4, 1.6 - 0.4*\i) circle (0.065);
\draw[->, shift={(0.03 + \i - 1, 0.14+ 0.4*\i - 0.4)}, rotate=21.4] (0, 0) arc (130:50:0.72);
\draw[->, shift={(0.126 + \i - 1+4, 0.064- 0.4*\i + 0.4+1.6)}, rotate=-21.4] (0, 0) arc (130:50:0.72);
\draw[->, shift={(0.126 + \i - 1-4, 0.064- 0.4*\i + 0.4+1.6)}, rotate=-21.4] (0, 0) arc (130:50:0.72);
}

\foreach \i in {1, 2, 3}{
\draw[shift={(0.25 + \i - 1, -0.6 + 0.4*\i - 0.4)}] (0, 0) node[rotate=-50] {$=$};
}
\draw (0, -0.28) node {$x_{0}$};
\draw (1, 0.12) node {$x_{1}$}; 
\draw (2, 0.52) node {$x_{2}$}; 
\draw (3, 0.92) node {$x_{3}$}; 
\draw (0.39, 0.72) node {\small{$\phi_{1}$}};
\draw (0.39 + 1, 0.72 + 0.4) node {\small{$\phi_{2}$}};
\draw (0.39 + 2, 0.72 + 0.4*2) node {\small{$\phi_{3}$}};
\draw (0.39 + 3, 0.72 + 0.4*3) node {\small{$\phi_{4}$}};
\begin{scope}[shift={(-8, 0)}]
\draw (4.7, 1.9) node {\small{$\phi_{1}$}};
\draw (4.7 + 1, 1.9 - 0.4) node {\small{$\phi_{2}$}};
\draw (4.7 + 2, 1.9 - 0.4*2) node {\small{$\phi_{3}$}};
\draw (4.7 + 3, 1.9 - 0.4*3) node {\small{$\phi_{4}$}};
\end{scope}
\draw (-4, 1.32) node {$x_{-4}$}; 
\draw (-3, 0.92) node {$x_{-3}$}; 
\draw (-2, 0.52) node {$x_{-2}$}; 
\draw (-1, 0.12) node {$x_{-1}$}; 

\draw (5, 0.92) node {$x_{5}$}; 
\draw (6, 0.52) node {$x_{6}$}; 
\draw (7, 0.12) node {$x_{7}$}; 
\draw (8, -0.28) node {$x_{8}$};
\draw (4.7, 1.9) node {\small{$\phi_{1}$}};
\draw (4.7 + 1, 1.9 - 0.4) node {\small{$\phi_{2}$}};
\draw (4.7 + 2, 1.9 - 0.4*2) node {\small{$\phi_{3}$}};
\draw (4.7 + 3, 1.9 - 0.4*3) node {\small{$\phi_{4}$}};

\draw (0.49, -0.8) node {$o$};
\draw (0.49 + 1, -0.8 + 0.25) node {$\phi_{1} o$};
\draw (0.47 + 2, -0.8 + 0.68) node {$\phi_{1}\phi_{2} o$};
\draw (4, 0.4*4-0.28) node {$x_{4}$};
\draw (4, 0.4*4-0.6) node[rotate=90] {$=$};
\draw (4, 0.4*4-1.05) node {$\prodSeq(s)o$};

\draw (8 - 0.212, -0.595) node[rotate=60] {$=$};
\draw (4.9, -0.94) node {$\phi_{1}\phi_{2}\phi_{3}\phi_{4}\phi_{1}\phi_{2}\phi_{3}\phi_{4} o = (\prodSeq(s))^{2} o$};

\end{tikzpicture}
\caption{Axes associated with a sequence of isometries $s = (\phi_{1}, \phi_{2}, \phi_{3}, \phi_{4})$. Points inside the darker shadow constitute $\Gamma^{+}(s)$, and those inside the lighter shadow constitute $\Gamma^{2}(s)$. Points in the dashed region constitute $\Gamma^{-}(s)$.}
\label{fig:seqOrbit}
\end{figure}

\subsection{Strong contraction} \label{subsection:contracting}

Given a subset $A$ of $X$, we define the closest point projection $\pi_{A} : X \rightarrow 2^{A}$ onto $A$ by \[
\pi_{A}(x) := \big\{ a \in A : d(x, a) = d(x, A)\big\}.
\]
Note that $\pi_{A}(x)$ is nonempty for each $x \in X$ when $A$ is a closed and locally compact set.

\begin{definition}\label{dfn:contractingSet}
Let $K>0$. A subset $A$ of $X$ is \emph{$K$-strongly contracting} if the following holds for the closest point projection $\pi_{A}$: \[
\diam_{X}\big(\pi_{A}(x) \cup \pi_{A}(y)\big) \le K
\]
for all $x, y \in X$ that satisfy $d_{X}(x, y) \le d_{X}(x, A)$.
\end{definition}

A $K$-strongly contracting $K$-quasigeodesic is called a \emph{$K$-contracting axis}. A lemma follows:

\begin{lem}\label{lem:projBdd}
Let $A$ be a $K$-strongly contracting subset of $X$. Then the closest point projection $\pi_{A} : X \rightarrow A$ is $(1, 4K)$-coarsely Lipschitz, i.e., for each $x, y \in X$ we have\[
\diam(\pi_{A}(x) \cup \pi_{A}(y)) < d(x, y) + 4K
\]
\end{lem}

This lemma is well-known in various forms (\cite[Lemma 2.11]{arzhantseva2015growth}, \cite[Lemma 2.4]{sisto2018contracting} and \cite[Proposition 2.4(4)]{yang2019statistically}). The explicit constant $4K$ is given as a consequence of Lemma \ref{lem:BGIPHausdorff}.

\begin{lem}[{\cite[Proposition 2.2 (3)]{yang2020genericity}}]\label{lem:BGIPRestriction}
For each $K>1$ there exists a constant $K' = K'(K)$ such that any subpath of a $K$-contracting axis is a $K'$-contracting axis.
\end{lem}

\begin{lem}[{\cite[Lemma 2.15]{arzhantseva2015growth}}, {\cite[Proposition 2.2(2)]{yang2020genericity}}]\label{lem:ContractingHausdorff}
Let $A$ and $A'$ be coarsely equivalent subsets of $X$. Then $A$ is strongly contracting if and only if $A'$ is strongly contracting.
\end{lem}

\begin{definition}\label{dfn:contractingIso}
An isometry $g$ of $X$ is \emph{strongly contracting} if its orbit $\{g^{i} o\}_{i \in \Z}$ is a strongly contracting quasigeodesic.
\end{definition}

\begin{definition} \label{dfn:independence}
We say that isometries $g$ and $h$ of $X$ are \emph{independent} if for any $x \in X$ the map \[
(m, n) \mapsto d(g^{m} o, h^{n} o)
\] is proper, i.e., $\{ (m, n) : d(g^{m} o, h^{n} o) < M\}$ is bounded for each $M > 0$.
\end{definition}

The following lemma will be proved in Subsection \ref{subsection:contractingGeod}.

\begin{lem}\label{lem:indepEquiv}
Two strongly contracting isometries $g$ and $h$ of $X$ are independent if and only if $\pi_{\{ g^{i} o : i \in \Z\}}(\{h^{i} o : i \in \Z\})$ and $\pi_{\{ h^{i} o : i \in \Z\}}(\{g^{i} o : i \in \Z\})$ have finite diameters.
\end{lem}

\subsection{Weak contraction}\label{subsection:weakContracting}

This subsection only matters in Part \ref{part:weak}; readers interested in Part \ref{part:strong} may skip this subsection.

\begin{definition}\label{dfn:weakContractingSet}
Let $K>0$ and $A \subset X$. A \emph{$K$-projection} onto $A$ is a $K$-coarsely Lipschitz map $\pi : X \rightarrow A$ such that $d(a, \pi(a)) \le K$ for each $a \in A$. Note that  for each $x \in X$ we have\begin{equation}\label{eqn:KProjection}
\begin{aligned}
d(x, \pi(x)) &\le  \inf_{a \in A} \big[d(x, a) + d(a, \pi(a)) + d(\pi(a), \pi(x))\big] \\
& \le  \inf_{a \in A} \big[(K+1) d(x, a) + 2K\big] \le (K+1) d(x, A) + 2K
\end{aligned}
\end{equation}

A set $A$ is  \emph{$K$-weakly contracting} if there exists a $K$-projection $\pi_{A}$ such that\begin{equation}\label{eqn:weak}
\diam_{X} \Big( \pi_{A}(x) \cup \pi_{A}(y)\Big) \le K
\end{equation}
holds for all $x, y \in X$ that satisfy $d(x, y) \le d(x, A)/K$.
\end{definition}

\begin{lem}\label{lem:quickLanding}
For each $K, M>1$ there exists $K' > K$ such that the following holds. 

Let $x, y \in X$. Let $A$ be a $K$-weakly contracting set such that $d(x, A) \ge K'$ and such that $\diam\big(\pi_{A}(x)\cup \pi_{A}(y)\big) \ge K'$. Then there exists $p \in[x, y]$ such that $\diam\big(\pi_{A}(x)\cup \pi_{A}(p)\big) \le 2K'$ and such that either:\[
d(x, A) \ge M d(p, A) \quad \textrm{or} \quad M d(x, A) \le d(p, A).
\]
\end{lem}

\begin{proof}
We set $K' = K^{2}\big( M(M+7)(K+1) + 1\big)$.

Let $\eta : [0, L] \rightarrow X$ be a geodesic connecting $x$ to $y$. Note that for \[
\tau := \inf \Big\{ 0 \le t\le L : \diam\big(\pi_{A}(x) \cup \pi_{A}(\eta(t)) \big) > K'+ K\Big\}
\]
The $K$-coarse Lipschitzness of $\pi_{A}$ and Inequality \ref{eqn:weak} imply\[\begin{aligned}
\lim_{\epsilon \rightarrow 0+} \diam\big( \pi_{A}\big(\eta(\tau)\big) \cup \pi_{A}\big(\eta(\tau+\epsilon)\big) \Big) &\le K, \\
\diam\big(\pi_{A}(x) \cup \pi_{A}(\eta(\tau)) \big) \ge (K'+ K) - K &= K'.
\end{aligned}
\]
Hence, by replacing $y$ with $\eta(\tau)$, we may assume $\diam\big(\pi_{A}(x) \cup \pi_{A}(\eta(t))\big) \le K' + K$ for $t \in [0, L]$. If  $d(\eta(t), A) < \frac{1}{M} d(x, A)$ for some $t \in [0, L]$, then we are done; suppose not. We inductively take\[
t_{0} := 0, \quad
t_{i} := \min \left\{ t_{i-1} + \frac{1}{MK} d(x, A), \,\, L \right\} \quad( i> 0).
\]
The process halts at step $N$ when $t_{N}$ reaches $L$. We then have \[
d\big(\eta(t_{i-1}), \eta(t_{i})\big) = t_{i} - t_{i-1} \le \frac{1}{MK} d(x, A) \le \frac{1}{K} d\big(\eta(t_{i-1}), A\big)
\]
for each $i$. Using Inequality \ref{eqn:weak}, we deduce \[
\diam \big( \pi_{A}(x) \cup \pi_{A}(y)\big) \le \sum_{i=1}^{N} \diam \big(\pi_{A}(\eta(t_{i-1})) \cup \pi_{A}(\eta(t_{i}))\big) \le NK.
\]
Since the LHS is at least $K'$, we have $N \ge K' /K \ge 2KM(M+7)(K+1) + 1$.

Meanwhile, $t_{i} - t_{i-1} = d(x, A)/MK$ holds for $i \le N-1$. This implies \[
d(x, y) \ge t_{N-1} - t_{0} \ge (N-1) \frac{1}{MK} d(x, A),
\]
and considering the assumption $d(x, A) \ge K'\ge K$ we deduce\[\begin{aligned}
d(x, y) - \diam \big(\pi_{A}(x) \cup \pi_{A}(y)\big) &\ge (N-1) \frac{1}{MK} d(x, A) - (K'+ K)\\
&\ge MK(M+7)(K+1) \cdot \frac{1}{MK} d(x, A) - 2K'\\
& = (M+7) (K+1) d(x, A) -2K' \\
&\ge (M+1)(K+1) d(x, A)+ 4K.
\end{aligned}
\]
Now using Inequality \ref{eqn:KProjection} twice, we get\[ \begin{aligned}
d(y, A) &\ge \frac{1}{K+1} [d(y, \pi_{A}(y)) - 2K] \\
&\ge \frac{1}{K+1}[d(y, x) - d(x, \pi_{A}(x)) - \diam (\pi_{A}(x) \cup \pi_{A}(y)) - 2K] \\
&\ge \frac{1}{K+1} [d(x, y) - \diam (\pi_{A}(x) \cup \pi_{A}(y)) - (K+1) d(x, A) - 4K]\\
&\ge M d(x, A).\qedhere
\end{aligned}
\]
\end{proof}

\begin{lem}\label{lem:weakContractingConvex}
For each $K>1$ there exists $K'>0$ satisfying the following.

Let $A$ be a $K$-weakly contracting set, let $x, y \in X$, let $p$ be a point on $[x, y]$ and define \[
\begin{aligned}
D_{1} := \diam \big( \pi_{A}(x)\cup \pi_{A}(p)\big), \quad D_{2} := \diam \big(\pi_{A}(y)\cup \pi_{A}(p)\big).
\end{aligned}
\]
Then we have \begin{equation}\label{eqn:weakContractingConvex}
d(p, A) \le K'e^{-D_{1}/K'} d(x, A) + K'e^{-D_{2}/K'} d(y, A) + K'.
\end{equation}
\end{lem}

\begin{proof}
Let $M:= 2K+4$, let $K_{1} := K'(K, M)$ be as in Lemma \ref{lem:quickLanding}, and let $K':= 9 MK_{1}$.

Suppose to the contrary that Inequality \ref{eqn:weakContractingConvex} does not hold. Our goal is to find a triple $x', y', z'$ on $[x, y]$, in order from left to right, such that \[\begin{aligned}
d(y', A) &>  \max\big( Md(x', A), Md(z', A), K'\big),\\
4K_{1} & \ge \diam(\pi_{A}\{x', y', z'\}).
\end{aligned}
\]
If we find such triple, then we have \[\begin{aligned}
d(y', A) &> \frac{K+1}{2K+4} \cdot M\cdot  d(x', A) + \frac{K+1}{2K+4} \cdot M \cdot d(z', A) + \frac{2}{2K+4} \cdot K' \\
&\ge (K+1) d(x', A) + (K+1) d(z', A) + 18K_{1} \\
&> (K+1) d(x', A) + (K+1) d(z', A) + (2K_{1} + 4K).
\end{aligned}
\]
This will then lead to the contradiction \[ \begin{aligned}
d(x', z') &\le d\big(x', \pi_{A}(x')\big) + \diam \big(\pi_{A}(x')\cup \pi_{A}(z')\big) + d\big(\pi_{A}(z'), z'\big) \\
&\le \big( (K+1) d(x', A) + 2K \big) + 4K_{1} + \big( (K+1) d(z', A) + 2K \big) \\
&< 2 d(y', A) - (K+1) d(x', A) - (K+1) d(z', A) - 4K \\
&\le \left[ d(y', A) - d\big(x', \pi_{A}(x')\big) \right] +  \left[ d(y', A) - d\big(z', \pi_{A}(z')\big) \right] \\
&\le d(x', y') + d(y', z').
\end{aligned}
\]

Let $\eta : [0, L] \rightarrow X$ be the geodesic connecting $p$ to $x$ and let $t_{0} = 0$. Given $t_{i-1} \in [0, L)$, we pick $t_{i} \in [t_{i-1}, L]$ such that \begin{equation}\label{eqn:choosingRule}
\diam \big(\pi_{A}(\eta(t_{i-1}))\cup  \pi_{A}(\eta(t_{i}))\big) \le 2K_{1}, \quad d(\eta(t_{i}), A) \ge M d(\eta(t_{i-1}), A).
\end{equation} If such $t_{N}$ does not exist at step $N$, we let $t_{N} = L$ and stop. 

Recall that we are assuming \[
d(\eta(t_{0}), A) \ge K'e^{-D_{1}/K'} d(x, A) + K'e^{-D_{2}/K'} d(y, A) + K' \ge K'.
\]
Hence, $d(\eta(t_{i}), A) \ge M^{i} K' \ge K'$ for $i = 0, \ldots, N-1$. ($\ast$) Since $\eta$ is bounded, the process must stop at some $N$. We always have $t_{N} = L$ and $\eta(t_{N}) = x$. We discuss possible scenarios: 
\begin{enumerate}
\item $d\big(\pi_{A}(\eta(t_{N-1}))\cup \pi_{A}(\eta(t_{N}))\big) > 2K_{1}$. Recall Lemma \ref{lem:quickLanding}: there exists $\tau \in [t_{N-1}, t_{N}]$ such that $\diam \big(\pi_{A}(\eta(t_{N-1})\cup \eta(\tau)\big) \le 2K_{1}$ and either $d(\eta(\tau), A) \ge Md(\eta(t_{N-1}), A)$ or $d(\eta(\tau), A) \le \frac{1}{M} d(\eta(t_{N-1}), A)$. Since the first possibility is excluded, we conclude that $d(\eta(\tau), A) \le \frac{1}{M}d(\eta(t_{N-1}), A)$. There are two subcases. \begin{enumerate}[label=(\alph*)]
\item $N \ge 2$: in this case, $d(\eta(t_{N-1}), A) \ge M d(\eta(t_{N-2}), A)$ holds by our choice in Display \ref{eqn:choosingRule}. By ($\ast$), we also know that $d(\eta(t_{N-1}), A) \ge K'$. Lastly, $\pi_{A}\{\eta(t_{N-2}), \eta(t_{N-1}), \eta(\tau)\}$ has diameter at most $4K_{1}$. Hence, we can take $x' = \eta(\tau)$, $y' = \eta(t_{N-1})$ and $z' = \eta(t_{N-2})$.
\item $N = 1$: in this case, we have  $d(\eta(\tau), A) \le \frac{1}{M} d(p, A)$. We first pick $x' = \eta(\tau)$ and will pick $y'$ and $z'$ later.
\end{enumerate}
\item $\diam \big(\pi_{A}(\eta(t_{N-1})), \pi_{A}(\eta(t_{N})\big) \le 2K_{1}$. Then we have \[
D_{1} = \diam \big(\pi_{A}(\eta(0)), \pi_{A}(\eta(t_{N})\big) \le \sum_{i=1}^{N} \diam \big(\pi_{A}(\eta(t_{i-1})), \pi_{A}(\eta(t_{i}))\big) \le 2K_{1} N.
\]
Since $K'\ge 4K_{1}$, $K' \ge e^{2}$ and $e<4< 2K+4=M$, we deduce \[
d(x, A) \le \frac{1}{K'} e^{D_{1} / K'} d(p, A) \le \frac{1}{K'} e^{N} d(p, A) \le (2K+4)^{N-2} d(\eta(t_{0}) , A) \le \frac{1}{M} d(\eta(t_{N-1}), A).
\]
Given this, when $N\ge 2$, we can pick $x' = x=\eta(t_{N})$, $y'= \eta(t_{N-1})$ and $z' = \eta(t_{N-2})$ and deduce a similar contradiction. When $N = 1$, we set $x' = x$. 
\end{enumerate}
So far, we have obtained either the desired triple $(x', y', z')$, or a point $x' \in [x, p]$ such that \[
\diam\big(\pi_{A}(x')\cup  \pi_{A}(p)\big) \le 2K', \quad d(x', A) \le \frac{1}{M} d(p, A).
\] A similar discussion on $[p, y]$ also gives either the desired triple, or a point $z' \in [p, y]$ such that $d(\pi_{A}(z'), \pi_{A}(p)) \le 2K'$ and $d(z', A) \le \frac{1}{M}d(p, A)$. If we fall into the latter cases in both discussions, we let $y' = p$ and deduce the contradiction.
\end{proof}

\subsection{Random walks} \label{subsection:RW}

Let $\mu$ be a probability measure on a discrete group $G$ acting on a metric space $(X, d)$. We denote by $\check{\mu}$ the \emph{reflected version of $\mu$}, which by definition satisfies $\check{\mu}(g) := \mu(g^{-1})$. The \emph{random walk} generated by $\mu$ is the Markov chain on $G$ with the transition probability $p(g, h) := \mu(g^{-1} h)$.

Consider the \emph{step space} $(G^{\Z}, \mu^{\Z})$, the product space of $G$ equipped with the product measure of $\mu$. Each element $(g_{n})_{n \in \Z}$ of the step space is called a \emph{step path}, and there is a corresponding \emph{(bi-infinite) sample path} $(Z_{n})_{n \in \Z}$ under the correspondence \[
Z_{n} = \left\{ \begin{array}{cc} g_{1} \cdots g_{n} & n > 0 \\ id & n=0 \\ g_{0}^{-1} \cdots g_{n+1}^{-1} & n < 0. \end{array}\right.
\]
We also introduce the notation $\check{g}_{n} = g_{-n+1}^{-1}$ and $\check{Z}_{n} = Z_{-n}$. Note that we have an isomorphism $(G^{\Z}, \mu^{\Z}) \rightarrow (G^{\Z_{>0}}, \mu^{\Z_{>0}}) \times (G^{\Z_{>0}}, \check{\mu}^{\Z_{>0}})$ by $(g_{n})_{n \in \Z} \mapsto ((g_{n})_{n >0} , (\check{g}_{n})_{n > 0})$. In view of this, we sometimes write the bi-infinite sample path as $((Z_{n})_{n\ge0}, (\check{Z}_{n})_{n\ge0})$, where the distribution of $(Z_{n})_{n}$ and $(\check{Z}_{n})_{n}$ are independent.

In certain circumstances, it is beneficial to consider a probability space $(\Omega, \Prob)$ where the step distributions for the random walk is defined, together with some other RVs. For this purpose, we say that $(\Omega, \Prob)$ is a \emph{probability space for $\mu$} if there is a measure-preserving map from $(\Omega, \Prob)$ to $(G^{\Z_{>0}}, \mu^{\Z_{>0}})$, or equivalently, if independent step RVs $\{g_{n}(\w)\}_{n>0}$ are defined and distributed according to $\mu$.  We  similarly define a probability space $(\check{\Omega}, \Prob)$ for $\check{\mu}$, together with RVs $\{g_{n}(\check{\w})\}_{n >0}$. Then the product space $(\Omega \times \check{\Omega}, \Prob)$ models the (bi-infinite) random walk generated by $\mu$. We often omit $\w$ while writing e.g. $g_{n} = g_{n}(\w)$ and $Z_{n} = Z_{n}(\w)$. To make a distinction, we mark RVs on $\check{\Omega}$ with the `check' sign, e.g., $\check{g}_{n} := g_{n}(\check{\w})$, $\check{Z}_{n} := Z_{n}(\check{\w})$.

We define the \emph{support} of $\mu$, denoted by $\supp \mu$, as the set of elements in $G$ that are assigned nonzero values of $\mu$. 
We denote by $\mu^{N}$ the product measure of $N$ copies of $\mu$, and by $\mu^{\ast N}$ the $N$-th convolution measure of $\mu$. We say that $\mu$ is \emph{non-elementary} if the subsemigroup generated by the support of $\mu$ contains two independent strongly contracting isometries $g, h$ of $X$. By taking suitable powers, we may assume that $g$ and $h$ belong to the same $\supp \mu^{\ast N}$ for some $N > 0$. 

When a constant $M_{0}$ (to be fixed later) is understood, we use the notation \[
\axes_{i}(\w):= \left(Z_{i - M_{0}}(\w) o, \,\,Z_{i-M_{0} + 1}(\w) o,\,\, \ldots,\,\, Z_{i}(\w) o\right).
\]
Similarly, we denote $(Z_{i-M_{0}}(\check{\w})o, \ldots, Z_{i}(\check{\w}) o)$ by $\axes_{i}(\check{\w})$.

\part{Random walks with strongly contracting isometries} \label{part:strong}

In Part \ref{part:strong}, we develop a theory of random walks that involve strongly contracting isometries. The following convention is employed throughout Part \ref{part:strong}.

\begin{conv}\label{conv:strong}
We assume that: \begin{itemize}
\item $(X, d)$ is a geodesic metric space;
\item $G$ is a countable group of isometries of $X$, and 
\item $G$ contains two independent strongly contracting isometries.
\end{itemize}
We also fix a basepoint $o \in X$.
\end{conv}

We emphasize that no further requirements (properness, WPD-ness, etc.) are imposed on $X$ or $G$. Convention \ref{conv:strong} includes the following situations:
\begin{enumerate}
\item $(X, d)$ is a geodesic Gromov hyperbolic space and $G$ contains independent loxodromics, e.g. 
\begin{enumerate}
\item $(X, d)$ is the curve complex of a finite-type hyperbolic surface and $G$ is the mapping class group, or
\item $(X, d)$ is the complex of free factors of the free group of rank $N \ge 3$ and $G$ is the outer automorphism group $\Out(F_{N})$;
\end{enumerate}
\item $X$ is Teichm{\"u}ller space of finite type, $G$ is the corresponding mapping class group, and $d$ is either the Teichm{\"u}ller metric $d_{\T}$ \cite{minsky1996quasi-projections} or the Weil-Petersson metric $d_{WP}$ \cite{bestvina2002bounded};
\item $(X, d)$ is the Cayley graph of a braid group modulo its center $B_{n}/Z(B_{n})$ with respect to its Garside generating set, and $G$ is the braid group $B_{n}$ \cite{calvez2021morse};
\item $(X, d)$ is the Cayley graph of a group $G$ with nontrivial Floyd boundary \cite{karlsson2003free}, \cite{gerasimov2013quasi-isometric};
\item $(X, d)$ is the Cayley graph of a $Gr'(1/6)$-labeled graphical small cancellation group $G$ \cite{arzhantseva2019negative};
\item $(X, d)$ is a (not necessarily proper nor finite-dimensional) CAT(0) space and $G$ contains  independent rank-1 isometries; e.g., $G$ is an irreducible right-angled Artin group and $(X, d)$ is the universal cover of its Salvetti complex.
\end{enumerate}

\section{Alignment I: strongly contracting axes} \label{section:strongAlign}

In this section, we will formulate and prove the following claim. Let $(\kappa_{i})_{i=1}^{n}$ be a sequence of long enough contracting axes. Suppose that each pair of consecutive axes is aligned: $\kappa_{i}$ ($\kappa_{i+1}$, resp.) projects onto $\kappa_{i+1}$ ($\kappa_{i}$, resp.) near the beginning point of $\kappa_{i+1}$ (the ending point of $\kappa_{i}$, resp.). Then the axes are globally aligned: $\kappa_{i}$ projects onto $\kappa_{j}$ near the beginning point (ending point, resp.) of $\kappa_{j}$ when $i < j$ ($i>j$, resp.).

\subsection{Contracting geodesics} \label{subsection:contractingGeod}

The goal of this subsection is to establish Corollary \ref{cor:BGIPIntersection}. We begin by recalling a lemma that appeared as \cite[Lemma 2.14]{arzhantseva2015growth}, \cite[Lemma 2.4]{sisto2018contracting} and \cite[Lemma 2.4(4)]{yang2019statistically}. For a version with explicit constant, see \cite[Lemma 2.2]{chawla2023genericity}.

\begin{lem}\label{lem:BGIPHausdorff}
Let $A$ be a $K$-strongly contracting set and let $\eta: I \rightarrow X$ be a geodesic such that $\diam(\pi_{A}(\eta)) > K$. Then there exist $t < t'$ in $I$ such that $\pi_{A}(\eta)$ and $\eta|_{[t, t']}$ are $4K$-coarsely equivalent, and moreover, such that \[
\diam\big( \pi_{A}\big(\eta|_{(-\infty, t]}\big) \cup \eta(t)\big) <2K \,\,\textrm{and}\,\, \quad \diam\big( \pi_{A}\big(\eta|_{[t', +\infty)}\big) \cup \eta(t')\big) < 2K.
\]
\end{lem}

\begin{lem}\label{lem:QuasiHausdorff}
For each $K>1$ there exists $K'=K'(K) >K$ that satisfies the following.

Let $\eta: J \rightarrow X$ be a $K$-quasigeodesic whose endpoints are $x$ and $y$, let $A$ be a subset of $\eta$ such that $d(x, A) < K$, $d(y, A) < K$, and let $\gamma: J' \rightarrow X$ be a geodesic that is $K$-coarsely equivalent to $A$. Then $\eta$ and $\gamma$ are also $K'$-coarsely equivalent, and moreover, there exists a $K'$-quasi-isometry $\varphi : J \rightarrow J'$ such that $d(\eta(t), (\gamma \circ \varphi)(t)) < K'$ for each $t \in J$.
\end{lem}

\begin{proof}
Without loss of generality, let $J = [a, b]$, $J' = [c, d]$ and $\eta(a) = x$, $\eta(b) = y$. For each $s \in J'$, we can pick $t_{s} \in J$ such that $d(\gamma(s), \eta(t_{s})) < K$ as $\gamma$ is coarsely contained in $A$. Note that \[
|t_{s_{1}}- t_{s_{2}}| \le Kd \big( \eta(t_{s_{1}}),  \eta(t_{s_{2}}) \big) + K^{2} \le K d\big(\gamma(s_{1}), \gamma(s_{2}) \big) + 3K^{2} = K|s_{1} - s_{2}| + 3K^{2}.
\]
Similarly, $|t_{s_{1}} - t_{s_{2}}| \ge \frac{1}{K} |s_{1} - s_{2}| - 1 - 2K$ holds. Hence, $s \mapsto t_{s}$ is a $3K^{2}$-quasi-isometric embedding. 

It remains to show that $\{t_{s} : s \in J'\}$ is coarsely equivalent to $J$. Note that $A$ is $3K$-coarsely connected, as it is $K$-coarsely contained in an $1$-connected set $\gamma$. It follows that $\eta^{-1}(A)$ is $4K^{2}$-coarsely connected subset of $[a, b]$. Moreover, since $x$ and $y$ are $K$-close to $A$, we have $d(a, \eta^{-1}(A)), d(b, \eta^{-1}(A)) < 2K^{2}$. Combined together, $[a, b]$ is $4K^{2}$-coarsely contained in $\eta^{-1}(A)$.

Next, for each $p \in A$ there exists $s \in J'$ such that $d(\gamma(s), p) < K$. This implies $d(\eta(t_{s}), p) < 2K$ and $\diam(t_{s}, \eta^{-1}(p)) < 3K^{2}$. Hence, $\eta^{-1}(A)$ is $3K^{2}$-coarsely contained in $\{t_{s} : s \in J'\}$.
\end{proof}

$K$-quasi-isometries between intervals are $K'$-coarsely equivalent to a monotone map for some $K'=K'(K)$. (for an explicit  $K'$, see the proof of \cite[Theorem 1.2]{sankaran2006on-homeomorphisms}). Hence, we have:

\begin{cor}\label{cor:QuasiHausdorff}
For each $K>1$ there exists $K'=K'(K) >K$ that satisfies the following.

Let $\eta: J \rightarrow X$ be a $K$-quasigeodesic connecting $x$ to $y$, let $A$ be a subset of $\eta$ such that $d(x, A) < K$ and $d(y, A) < K$, and let $\gamma: J' \rightarrow X$ be a geodesic that is $K$-coarsely equivalent to $A$. Then $\eta$ and $\gamma$ are $K'$-fellow traveling.
\end{cor}

Combining Lemma \ref{lem:BGIPHausdorff} and Lemma \ref{lem:QuasiHausdorff}, we observe an instance of the Morseness of contracting axes (\cite[Theorem 1.3]{arzhantseva2017characterizations}, \cite[Lemma 2.8.(2)]{sisto2018contracting}, \cite[Lemma 2.2]{yang2014growth}).

\begin{cor}\label{cor:BGIPSelfNbd}
For each $K>1$ there exists a constant $K' >K$ that satisfies the following. Let $\eta : J \rightarrow X$ be a $K$-contracting axis and $\gamma: J' \rightarrow X$ be a geodesic that share the endpoints. Then $\eta$ and $\gamma$ are $K'$-fellow traveling.
\end{cor}

\begin{cor}\label{cor:BGIPIntersection}
For each $K>1$ there exists a constant $K' = K'(K)$ that satisfies the following.

Let $\kappa : I \rightarrow X$ and $\eta : J \rightarrow X$ be $K$-contracting axes. Suppose that $\diam(\pi_{\kappa}(\eta)) > K'$. Then there exist $t < t'$ in $I$ and $s < s'$ in $J$ such that the following sets are all $K'$-coarsely equivalent: \[
\kappa|_{[t, t']}, \, \,\eta|_{[s, s']},\,\,  \,\pi_{\kappa}(\eta),\, \,\pi_{\eta}(\kappa).
\]
Moreover, we have\[
\diam\big( \pi_{\kappa}\left( \eta|_{(-\infty, s]}\right) \cup \eta(s)\big) < K', \quad \diam\big( \pi_{\kappa}\left( \eta|_{[s', +\infty)}\right) \cup \eta(s')\big) < K'.
\]
\end{cor}

\begin{proof}
For simplicity, we focus on the case where $\kappa$, $\eta$ have endpoints.

Let $\gamma : J' \rightarrow X$ be a geodesic that connects the endpoints of $\eta$. Then $\gamma$ and $\eta$ are coarsely equivalent by Corollary \ref{cor:BGIPSelfNbd}. Lemma \ref{lem:projBdd} tells us that $\pi_{\kappa}(\gamma)$ is coarsely equivalent to $\pi_{\kappa}(\eta)$ and hence large. By Lemma \ref{lem:BGIPHausdorff}, there exist $u < u'$ in $J'$ such that $\pi_{\kappa}(\gamma)$ and $\gamma|_{[u, u']}$ are coarsely equivalent and such that $\gamma|_{(-\infty, u]}$ and $\gamma|_{[u', +\infty)}$ project onto $\kappa$ near $\gamma(u)$ and $\gamma(u')$, respectively.

Note again that $\eta$ and $\gamma$ are fellow traveling by Corollary \ref{cor:BGIPSelfNbd} and $\pi_{\kappa}$ is coarsely Lipschitz. This enables us to replace $\gamma$ with $\eta$: there exist $s < s'$ in $J$ such that $\pi_{\kappa}(\eta)$ and $\eta|_{[s, s']}$ are coarsely equivalent and such that $\eta|_{(-\infty, s]}$ and $\eta|_{[s', +\infty)}$ project onto $\kappa$ near $\eta(s)$ and $\eta(s')$, respectively.

Since $\pi_{\kappa}(\eta) \subseteq \kappa$ and $\eta|_{[s, s']}$ are nearby, each point $\eta(t)$ in $\eta|_{[s, s']}$ is near a point $\kappa(s_{t})$ of $\kappa$. This $\kappa(s_{t})$ projects onto $\eta$ near $\eta(t)$. It follows that $\pi_{\eta}(\kappa)$ coarsely contains $\eta|_{[s, s']}$ and hence $\pi_{\kappa}(\eta)$.

This implies that $\pi_{\eta}(\kappa)$ is also large, and we have another round: there exist $t < t'$ in $I$ such that $\pi_{\eta}(\kappa)$ and $\kappa|_{[t, t']}$ are coarsely equivalent. Moreover, $\pi_{\kappa}(\eta)$ coarsely contains $\pi_{\eta}(\kappa)$. Hence, the two projections are coarsely equivalent, and \[
\pi_{\eta}(\kappa),\,\, \kappa|_{[t, t']}, \,\,\pi_{\kappa}(\eta),\,\,\eta|_{[s, s']}
\]
are all coarsely equivalent.
\end{proof}

We now digress to the proof of Lemma \ref{lem:indepEquiv}.

\begin{proof}[Proof of Lemma \ref{lem:indepEquiv}]
Let $\eta$ and $\kappa$ denote the axes of $g$ and $h$, i.e., $\eta: i \mapsto g^{i} o$ and $\kappa : j \mapsto h^{j} o$. Let $\eta$ and $\kappa$ be $K$-contracting axes for some $K>0$.

Suppose that $\pi_{\kappa}(\eta)$ has finite diameter, i.e., there exists $M$ such that \[
\pi_{\kappa}(\eta) \subseteq \big\{\kappa(-M), \kappa(-M+1), \ldots, \kappa(M-1), \kappa(M)\big\}.
\]Then for each $i \in \Z$ and $|j|> M+2K^{2}$, the diameter of $\pi_{\kappa}(\eta(i) ) \cup \kappa(j)$ is greater than $K$ and $[\eta(i), \kappa(j)]$ is $2K$-close to $\pi_{\kappa}(\eta(i))$. This forces that \[
d(\eta(i), \kappa(j)) \ge \inf_{|t| \le M} d(\kappa(j), \kappa(t)) - 2K \ge \frac{1}{K} |j| - \frac{M}{K} - 3K.
\]
Similarly, if $\pi_{\eta}(\kappa)$ has finite diameter, then there exists $M'$ such that $d(\eta(i), \kappa(j)) \ge \frac{1}{K} |i| - M'$ holds for all $j$ and $|i| > M'$. Hence $d(g^{i} o, h^{j} o)$ is a proper function, and $g$ and $h$ are independent.

Now suppose that $\pi_{\kappa}(\eta)$ has infinite diameter. By Corollary \ref{cor:BGIPIntersection}, $\eta$ and $\kappa$ have subpaths $\eta'$ and $\kappa'$, respectively, that are coarsely equivalent to $\pi_{\kappa}(\eta)$, of infinite diameter. This means that $\eta$ and $\kappa$ are not independent.
\end{proof}

\subsection{Alignment} \label{subsection:alignment}

Let us now define the notion of alignment.

\begin{definition}\label{dfn:alignment}
For $i=1, \ldots, n$, let $\kappa_{i}$ be a path on $X$ whose beginning and ending points are $x_{i}$ and $y_{i}$, respectively. We say that $(\kappa_{1}, \ldots, \kappa_{n})$ is $C$-aligned if \[
\diam_{X}\big(y_{i} \cup \pi_{\kappa_{i}}(\kappa_{i+1})\big) < C, \quad \diam_{X}\big(x_{i+1} \cup \pi_{\kappa_{i+1}} (\kappa_{i}) \big) < C
\]
hold for $i = 1, \ldots, n-1$.
\end{definition}

\begin{figure}
\begin{tikzpicture}[scale=1.1]
\draw[very thick] (-3, 0) -- (-0.8, 0);
\draw[very thick] (3, 0) -- (0.8, 0);
\draw[dashed, thick] (-3, 0) arc (160:24.5:2.27);
\draw[dashed, thick] (-0.8, 0) arc (160:27:1.03);
\draw[shift={(1.175, 0.06)}, rotate=19, dashed, thick] (-0.153, 0.153) -- (0, 0) -- (0.153, 0.153);

\begin{scope}[rotate=180]
\draw[dashed, thick] (-3, 0) arc (160:23:2.2);
\draw[dashed, thick] (-0.8, 0) arc (160:29:0.95);
\draw[shift={(1.025, 0.06)}, rotate=19, dashed, thick] (-0.153, 0.153) -- (0, 0) -- (0.153, 0.153);
\end{scope}

\draw (-3.19, 0) node {$x_{1}$};
\draw (-0.58, 0) node {$y_{1}$};
\draw (0.58, 0) node {$x_{2}$};
\draw (3.19, 0) node {$y_{2}$};

\draw (-1.9, -0.2) node {$\kappa_{1}$};
\draw (1.9, -0.2) node {$\kappa_{2}$};

\end{tikzpicture}
\caption{Schematics for an aligned sequence of paths.}
\label{fig:alignment}
\end{figure}
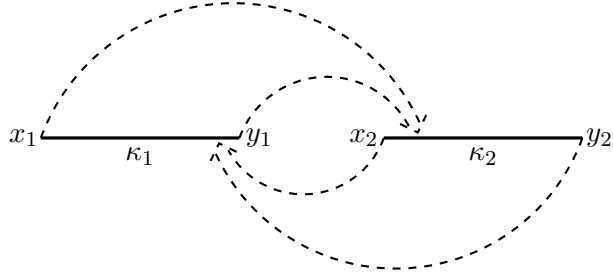

Note that if $(\kappa_{i}, \ldots, \kappa_{j})$ and $(\kappa_{j}, \ldots, \kappa_{k})$ are $C$-aligned, then  $(\kappa_{i}, \ldots, \kappa_{j}, \ldots, \kappa_{k})$ is also $C$-aligned. We allow degenerate paths, e.g., the case where $\kappa_{1}$ or $\kappa_{n}$ is a point.

Combining Lemma \ref{lem:BGIPHausdorff} and Corollary \ref{cor:QuasiHausdorff}, we obtain the following consequence of alignment. 

\begin{cor}\label{cor:singleAlign}
For each $C, K>1$, there exists $K' = K'(K, C) > \max(K, C)$ such that the following holds. 

Let $x, y \in X$ and let $\kappa$ be a $K$-contracting axis such that $\diam(\kappa) > K+2C$ and such that $(x, \kappa, y)$ is $C$-aligned. Then $[x, y]$ contains a subsegment $\eta$ that is $4K$-coarsely contained in $\kappa$ and is $K'$-fellow traveling with $\kappa$.
\end{cor}

Our first lemma states that the alignment of two strongly contracting axes is governed by the projections of their endpoints to the other axis.

\begin{lem}\label{lem:1segment}
For each $C, K>1$, there exists $D= D(K, C)>\max(K, C)$ such that the following holds.

Let $\kappa, \eta$ be $K$-contracting axes. If $\big(\kappa, (\textrm{beginning point of $\eta$})\big)$ and $\big((\textrm{beginning point of $\kappa$}), \eta\big)$ are each $C$-aligned, then $(\kappa, \eta)$ is $D$-aligned.
\end{lem}

\begin{proof}
For simplicity, let us assume that the domains of $\kappa$ and $\eta$ are closed intervals, say, $I = [t_{0}, t_{1}]$ and $J = [s_{0}, s_{1}]$, respectively.

It suffices to show that $\pi_{\kappa}(\eta)$ and $\pi_{\eta}(\kappa)$ are both small. Suppose not. Then Corollary \ref{cor:BGIPIntersection} provides $t < t'$ in $I$ and $s < s'$ in $J$ such that \[
\pi_{\kappa}(\eta),\,\, \pi_{\eta}(\kappa), \,\, \kappa|_{[t, t']},\,\, \eta|_{[s. s']}
\]
are all coarsely equivalent and large. Moreover, $\pi_{\eta}(\kappa(t_{0}))$ is near $\kappa(t)$ and $\pi_{\eta}(\kappa(t_{1}))$ is near $\kappa(t')$. Similarly, $\pi_{\kappa}(\eta(s_{0}))$ is near $\eta(s)$ and $\pi_{\kappa}(\eta(s_{1}))$ is near $\eta(s')$.

Since $\pi_{\kappa}(\eta)$ and $\pi_{\eta}(\kappa)$ are large, both $t' - t$ and $s' - s$ are large. Since $[\kappa(t), \kappa(t')]$ and $[\eta(s), \eta(s')]$ are coarsely equivalent, one of the following is true:\begin{itemize}
\item $\kappa(t)$ is near $\eta(s)$ and $\kappa(t')$ is near $\eta(s')$; or,
\item $\kappa(t)$ is near $\eta(s')$ and $\kappa(t')$ is near $\eta(s)$.
\end{itemize}
This leads to the following contradictions: \begin{itemize}
\item If $\kappa(t)$ is near $\eta(s)$, then $\eta(s_{0})$ projects onto $\kappa$ near $\kappa(t)$. Since $t_{1} - t \ge t' - t$ is large, this projection cannot be near $\kappa(t_{1}) = y$.
\item If $\kappa(t)$ is near $\eta(s')$, then $\kappa(t_{0})$ projects onto $\eta$ near $\eta(s')$. Since $s' - s_{0} \ge s' - s$ is large, this projection cannot be near $\eta(s_{0}) = x'$.
\end{itemize}
Hence, $\pi_{\kappa}(\eta)$ and $\pi_{\eta}(\kappa)$ cannot be large and the conclusion follows.
\end{proof}

The following lemma was inspired by Behrstock's inequality for subsurface projections and curve complexes \cite[Theorem 4.3]{behrstock2006asymptotic}.

\begin{lem}[{\cite[Lemma 2.5]{sisto2018contracting}}]\label{lem:1segmentVar}
For each $D, K>1$, there exists $E = E(K, D)>\max( K, D)$ that satisfies the following.

Let $\kappa$, $\eta$ be $K$-contracting axes in $X$. Suppose that $(\kappa, \eta)$ is $D$-aligned. Then for any $p \in X$, either $(p, \eta)$ is $E$-aligned or $(\kappa, p)$ is $E$-aligned.
\end{lem}

We are now ready to prove the main result of this section.

\begin{prop}\label{prop:BGIPConcat}
For each $D, K>1$, there exist $E = E(K, D)>\max(K, D)$ and $L = L(K,D)>\max(K, D)$ that satisfy the following.

Let $x, y \in X$ and let $\kappa_{1}, \ldots, \kappa_{n}$ be $K$-contracting axes  whose domains are longer than $L$. Suppose that $(x, \kappa_{1}, \ldots, \kappa_{n}, y)$ is $D$-aligned. Then $(x, \kappa_{i}, y)$ is $E$-aligned for each $i$. 
\end{prop}

\begin{proof}
Let $E = E(K, D)$ be as in Lemma \ref{lem:1segmentVar} and let $L = 3KE + K^{2}$. Our claim is that $(x, \kappa_{i})$ and $(\kappa_{i}, y)$ are $E$-aligned for each $i$. By symmetry, it suffices to prove the alignment of $(x, \kappa_{i})$.

Let $\kappa$ be a $K$-contracting axis whose domain is longer than $L$. Then the endpoints of $\kappa$ are at least $3E$-apart. Consequently, no point  $p$ in $X$ satisfy the following at the same time: \[
\textrm{$(p, \kappa)$ is $E$-aligned}, \quad \textrm{$(\kappa, p)$ is $E$-aligned}.
\]
From this observation, we inductively deduce \[
\textrm{$(x, \kappa_{i})$ is $E$-aligned} \Rightarrow \textrm{$(\kappa_{i}, x)$ is not $E$-aligned}  \Rightarrow \textrm{$(x, \kappa_{i+1})$ is $E$-aligned}.
\]
for $i=1, \ldots, n$, where the latter implication follows from Lemma \ref{lem:1segmentVar}.\end{proof}

The above proposition can be strengthened as follows. First, we record an immediate consequence of the definition of fellow-traveling.

\begin{lem}\label{lem:fellowTravelAlign}
Let $E>0$ and $x, y \in X$. Let $\kappa$ be a path that $E$-fellow travels with a subsegment of $[x, y]$. Then $(x, \kappa, y)$ is $4E$-aligned.
\end{lem}

\begin{prop}\label{prop:BGIPWitness}
For each $D, K>1$, there exist $E = E(K, D)>\max(K, D)$ and $L = L(K,D)>\max(K, D)$ that satisfy the following. 

Let $x, y \in X$ and let $\kappa_{1}, \ldots, \kappa_{n}$ be $K$-contracting axes whose domains are longer than $L$ and such that $(x, \kappa_{1}, \ldots, \kappa_{n}, y)$ is $D$-aligned. Then the geodesic $[x, y]$ has subsegments $\eta_{1}, \ldots, \eta_{n}$, in order from left to right, that are longer than $100E$ and such that $\eta_{i}$ and $\kappa_{i}$ are $0.1E$-fellow traveling for each $i$.  In particular, $(x, \kappa_{i}, y)$ are $E$-aligned for each $i$.
\end{prop}


\begin{proof}
Let $E_{1} = E(K, D)$ and $L_{1} = L(K, D)$ be as in Proposition \ref{prop:BGIPConcat}. Let $K_{1} = E_{1} + 8K$ and let $E = 10 K'(K, K_{1})$, where $K'(K, K_{1})$ is as in Corollary \ref{cor:singleAlign}. Let also $L = L_{1} + 101K(K+E) + 2K$.

We will inductively prove a variant of the given statement, namely: \begin{quote}
If $(x, \kappa_{1})$ is $K_{1}$-aligned and $(\kappa_{1}, \ldots, \kappa_{n}, y)$ is $D$-aligned, then the conclusion holds.
\end{quote}
First,  we know that $(\kappa_{1}, y)$ is $E_{1}$-aligned by Proposition \ref{prop:BGIPConcat}. Since $(x, \kappa_{1}, y)$ is $K_{1}$-aligned and $\kappa_{1}$ is long enough, Corollary \ref{cor:singleAlign} provides a subsegment $\eta_{1} = [x_{1}', y_{1}']$ of $[x, y]$ that is $4K$-coarsely contained in $\kappa_{1}$ and is $0.1E$-fellow traveling with $\kappa_{1}$. We then have \[
d(x_{1}', y_{1}') \ge \diam(\kappa_{1}) - 0.2E \ge \frac{L}{K} - K - 0.2E \ge 100E.
\]

If $n=1$, this finishes the proof. If not, note that $y_{1}'$ is $4K$-close to $\kappa_{1}$. Lemma \ref{lem:projBdd} implies that $(y_{1}', \kappa_{2})$ is $(D + 8K)$-aligned, and hence $K_{1}$-aligned. Now the induction hypothesis implies that $[y_{1}', y]$ has subsegments $\eta_{2}, \ldots, \eta_{n}$, in order from left to right, that are longer than $100E$ and such that $\eta_{i}$ and $\kappa_{i}$ $0.1E$-fellow travel for $i\ge 2$. Then $\eta_{1}, \ldots, \eta_{n}$ become the desired subsegments.
\end{proof}

Using Proposition \ref{prop:BGIPWitness}, we can recover the following results by Yang. 

\begin{lem}[{\cite[Lemma 4.4]{yang2014growth}, \cite[Proposition 2.9]{yang2019statistically}}]\label{lem:BGIPConcatContracting}
For each $D, M>0$ and $K>1$, there exist $E = E(K, D, M)>D$ and $L = L(K,D)>D$ that satisfies the following.

Let $\kappa_{1}, \ldots, \kappa_{n}$ be $K$-contracting axes whose domains are  longer than $L$. Suppose that $(\kappa_{1}, \ldots, \kappa_{n})$ is $D$-aligned and $d(\kappa_{i}, \kappa_{i+1}) < M$ for each $i$. Then the concatenation $\kappa_{1} \cup \ldots \cup \kappa_{n}$ of $\kappa_{1}, \ldots, \kappa_{n}$ is an $E$-contracting axis.
\end{lem}

\begin{lem}[{\cite[Corollary 3.2]{yang2014growth}}]\label{lem:BGIPQuasi}
For each $D>0$ and $K>1$, there exist $E = E(K, D)>D$ and $L = L(K,D)>D$ that satisfies the following.

For each $i \in \Z$, let $\kappa_{i}$ be a $K$-contracting axis whose beginning and ending points are $x_{i}$ and $y_{i}$, respectively, and whose domain is longer than $L$. Suppose that $(\ldots, \kappa_{i}, \kappa_{i+1}, \ldots)$ is $D$-aligned. Then the concatenation of $(\ldots, [x_{i-1}, y_{i-1}], [y_{i-1}, x_{i}], [x_{i}, y_{i}], [y_{i}, x_{i+1}], \ldots)$ is an $E$-quasigeodesic.
\end{lem}

\subsection{Schottky sets} \label{subsection:Schottky}

Using the previous concatenation lemmata, we will construct arbitrarily many independent contracting isometries. Recall again the notation introduced in Subsection \ref{subsection:path}.

\begin{definition}[cf. {\cite[Definition 3.11]{gouezel2022exponential}}]\label{dfn:Schottky}
Let $K > 0$ and let $S \subseteq G^{n}$ be a set of sequences of isometries. We say that $S$ is \emph{$K$-Schottky} if: \begin{enumerate}
\item $\Gamma^{+}(s)$ and $\Gamma^{-}(s)$ are $K$-contracting axes for all $s \in S$;
\item for each $x \in X$ we have \[
\#\Big\{ s \in S : \textrm{$\big(x, \Gamma^{+}(s)\big)$ and $\big(x, \Gamma^{-}(s)\big)$ are $K$-aligned}\Big\} \ge \# S - 1;
\]
\item for each $s \in S$, $\big(\bar{\Gamma}^{-}(s), \Gamma^{+}(s)\big)$ is $K$-aligned.
\end{enumerate}
Once a Schottky set $S$ is understood, its element $s$ is called a \emph{Schottky sequence} and the translates of $\Gamma^{\pm}(s)$ are called \emph{Schottky axes}. We say that $S$ is \emph{large enough} if its cardinality is at least 400.

Let $\mu$ be a probability measure on $G$. If each element $s$ of $S$ is attained by the product measure of $\mu$, i.e., $S \subseteq (\supp \mu)^{n}$, then we say that \emph{$S$ is a Schottky set for $\mu$}.
\end{definition}

An intuitive example was given in the introduction. Consider $S_{M}:= \big\{ s_{1}s_{2} \cdots s_{M} : s_{i} \in \{a, b\} \big\}$ in $F_{2}= \langle a, b \rangle$. For any infinite ray on $F_{2}$, at most 1 element $s \in S_{m}$ heads into the direction: \[
\#\{ s \in S_{M} :  (\xi, s)_{id} \ge M \,\, \textrm{or}\,\, (\xi, s^{-1})_{id} \ge M\} \le 1
\]
for each infinite ray $\xi$. Moreover, $s$ and $s^{-1}$ diverge early for any $s \in S_{M}$: \[
(s^{-1}, s)_{id} < 1 \quad \textrm{for each $s \in S$}.
\]
These properties are also satisfied by  the set of $N$-th powers of elements of $S_{M}$ \[
S_{N, M}:= \{ s^{N} : s \in S_{M} \}.
\]

\begin{definition} \label{dfn:SchottkyLong}
Given a constant $K_{0}>0$, we define: \begin{itemize}
\item $D_{0}= D(K_{0}, K_{0})$ be as in Lemma \ref{lem:1segment}, 
\item $E_{0}= E(K_{0}, D_{0})$, $L_{0} = L(K_{0}, D_{0})$ be as in Proposition \ref{prop:BGIPWitness}.
\end{itemize}
A $K_{0}$-Schottky set $S$ whose elements have domains longer than $L_{0}$ is called a \emph{long enough $K_{0}$-Schottky set}. In other words, when $S \subseteq G^{n}$ is $K_{0}$-Schottky and $n > L_{0}$, $S$ is called a long enough $K_{0}$-Schottky set. In this case, note that the endpoints of $\Gamma^{+}(s)$ are $100E_{0}$-apart for each $s \in S$.
\end{definition}

This definition is motivated by the alignment lemmata. Note that the $D_{0}$-alignment of a sequence of Schottky axes $(\gamma_{1}, \ldots, \gamma_{N})$ is a \emph{local} condition, between consecutive pairs of axes. Proposition \ref{prop:BGIPWitness} then promotes this into the global alignment, i.e., the $E_{0}$-alignment of $(\gamma_{i}, \gamma_{j})$ for any $i < j$, given that the involved Schottky set is long enough. The following definition is designed to capture this local-to-global phenomenon.

\begin{definition}\label{dfn:semiAlign}
Let $S$ be a Schottky set, let $x, y\in X$ and let $\kappa_{1}, \ldots, \kappa_{N}$ be Schottky axes. We say that $(x, \kappa_{1}, \ldots, \kappa_{N}, y)$ is \emph{$C$-semi-aligned} if it is a subsequence of a $C$-aligned sequence of $x$, $y$ and Schottky axes, i.e., if there exist Schottky axes $\eta_{1}, \ldots, \eta_{N'}$ and $1 \le i(1) < \ldots < i(N) \le N'$ such that: \begin{enumerate}
\item $(x, \eta_{1}, \ldots, \eta_{N'}, y)$ is $C$-aligned,
\item $\kappa_{k} = \eta_{i(k)}$ for $k = 1, \ldots, N$.
\end{enumerate}
Here, we also say that $(x, \kappa_{1}, \ldots, \kappa_{N})$ and $(\kappa_{1}, \ldots, \kappa_{N}, y)$ are $C$-semi-aligned.
\end{definition}

\begin{lem}\label{lem:SchottkyAlign}
Let $S$ be a long-enough $K_{0}$-Schottky set. Let $x, y \in X$, and for each $i=1, \ldots, N$, let $\kappa_{i}$ be a Schottky axis whose beginning and ending points are $x_{i}$ and $y_{i}$, respectively.
\begin{enumerate}
\item If $(x_{1}, \kappa_{2})$ and $(\kappa_{1}, x_{2})$ are $K_{0}$-aligned, then $(\kappa_{1}, \kappa_{2})$ is $D_{0}$-aligned.
\item If $(x, \kappa_{1}, \ldots, \kappa_{N}, y)$ is $D_{0}$-semi-aligned, then $(\kappa_{i}, \kappa_{j})$ is $E_{0}$-aligned for each $i<j$. Moreover, $\kappa_{i}$ is $0.1E_{0}$-coarsely contained in $[x, y]$ and $(x, \kappa_{i}, y)$ is $E_{0}$-aligned for each $i$. We also have \[
\begin{aligned}
d(x, x_{1}) + \sum_{i=1}^{N} d(x_{i}, y_{i}) + \sum_{i=1}^{N-1} d(y_{i}, x_{i+1}) + d(y_{N}, y) &\le d(x, y) + E_{0} N, \\
d(x, x_{1}) + \sum_{i=1}^{N-1} d(y_{i}, x_{i+1}) + d(y_{N}, y)  &\le d(x, y) - 50E_{0} N.
\end{aligned}
\] 
\end{enumerate}
\end{lem}

\begin{proof}
(1) By Lemma \ref{lem:1segment}. (2) Proposition \ref{prop:BGIPWitness} explains the first two claims in Item (ii). More explicitly, $[x, y]$ contains subsegments $[x_{1}', y_{1}']$, $\ldots$, $[x_{N}', y_{N}']$, in order from left to right, such that: \begin{enumerate}[label=(\alph*)]
\item $[x_{i}', y_{i}']$ and $\kappa_{i}$ are $0.1E_{0}$-coarsely equivalent;
\item $d(x_{i}', x_{i}) < 0.1E_{0}$ and $d(y_{i}', y_{i}) < 0.1E_{0}$;
\item $d(x_{i}', y_{i}') > 100E_{0}$
\end{enumerate}
for each $i$. This implies that \[
\begin{aligned}
d(x, y) &= d(x, x_{1}') + \sum_{i=1}^{N} d(x_{i}', y_{i}') + \sum_{i=1}^{N-1} d(y_{i}', x_{i+1}') + d(y_{N}', y)  \\
&\ge d(x, x_{1}) + \sum_{i=1}^{N} d(x_{i}, y_{i}) + \sum_{i=1}^{N-1} d(y_{i}, x_{i+1}) + d(y_{N}, y) - 2 \sum_{i=1}^{N}  \big(d(x_{i}, x_{i}') + d(y_{i}, y_{i}') \big)\\
&\ge d(x, x_{1}) + \sum_{i=1}^{N} d(x_{i}, y_{i}) + \sum_{i=1}^{N-1} d(y_{i}, x_{i+1}) + d(y_{N}, y)  - E_{0} N \\
&\ge d(x, x_{1}) + \sum_{i=1}^{N-1} d(y_{i}, x_{i+1}) + d(y_{N}, y)  + 50 E_{0} N. \qedhere
\end{aligned}
\] 
\end{proof}

We now associate long enough and large Schottky sets with non-elementary measures.

\begin{prop}[cf. {\cite[Proposition 3.12]{gouezel2022exponential}}]\label{prop:Schottky}
Let $\mu$ be a non-elementary probability measure on $G$. Then for each $N>0$,  there exists $K = K(N) > 0$ such that for each $L>0$ there exists a $K$-Schottky set of cardinality $N$ in $(\supp \mu)^{n}$ for some $n > L$.
\end{prop}

\begin{proof}
Since $\mu$ is non-elementary, the semigroup generated by $\supp \mu$ contains independent strongly contracting isometries $a$ and $b$. By taking suitable powers, we may assume that $a = \prodSeq(\alpha)$ and $b = \prodSeq(\beta)$ for some $\alpha, \beta \in (\supp \mu)^{L_{0}}$ for some $L_{0}>0$. There exists $K_{0} > 0$ such that:
\begin{enumerate}[label=(\roman*)]
\item $\Gamma^{+}(\alpha)$, $\Gamma^{-}(\beta)$ are $K_{0}$-contracting axes, and
\item\label{cond:AlignSchottky} $\diam(o \cup \pi_{\gamma}(\eta)) < K_{0}$ for distinct axes $\gamma$, $\eta$ among \[
\Gamma^{+}(\alpha),\,\,\Gamma^{-}(\alpha),\,\,\Gamma^{+}(\beta), \,\,\Gamma^{-}(\beta).
\]
\end{enumerate}
The above statements still hold with the same $K_{0}$ when $\alpha$ and $\beta$ are replaced with their self-concatenations, thanks to Lemma \ref{lem:BGIPRestriction} and Lemma \ref{lem:indepEquiv}. Let: \begin{itemize}
\item $K_{1} = E(K_{0}, K_{0}) > K_{0}$ be as in Lemma \ref{lem:1segmentVar};
\item $K_{2} = E(K_{0}, K_{1}) > K_{1}$, $L_{2} = L(K_{0}, K_{1})$ be as in Proposition \ref{prop:BGIPWitness};
\item $K_{3} = E(K_{0}, K_{2}) > K_{2}$, $L_{3} = L(K_{0}, K_{2})$ be as in Proposition \ref{prop:BGIPWitness};
\item $L_{3}' = 3K_{0}K_{3}$;
\item $K_{4} = E(K_{0}, K_{0}, 0)$, $L_{4} = L(K_{0}, K_{0})$ be as in Lemma \ref{lem:BGIPConcatContracting}.
\end{itemize}
 By self-concatenating $\alpha$ and $\beta$ if necessary, we may assume that \[
L_{0}>L_{2}+L_{3} + L_{3}' + L_{4}.
\]
Since $\Gamma^{+}(\alpha)$ is a $K_{0}$-quasigeodesic whose domain $L_{0}$-long, the endpoints of $\Gamma^{+}(\alpha)$ are at least $(L_{0}/K_{1} - K_{1})$-apart. Since $L_{0}$ is greater than $3K_{0} K_{3} \ge 2K_{0} K_{1} + K_{0}^{2}$,  the endpoints of $\Gamma^{+}(\alpha)$ are $2K_{1}$-far. In particular, no set $A \subseteq X$ can be simultaneously contained in the $K_{1}$-neighborhoods of the two endpoints of $\Gamma^{+}(\alpha)$. Hence, the statements \[
\big(x, \Gamma^{\pm}(\alpha)\big)\,\, \textrm{is $K_{1}$-aligned}, \quad\big(\Gamma^{\pm}(\alpha), x\big)\,\, \textrm{is $K_{1}$-aligned}
\]
are mutually exclusive for any $x \in X$. Similarly, the statements \[
\big(x, \Gamma^{\pm}(\beta)\big)\,\, \textrm{is $K_{1}$-aligned}, \quad\big(\Gamma^{\pm}(\beta), x\big)\,\, \textrm{is $K_{1}$-aligned}
\]are mutually exclusive.

Let $S_{0}$ be the set of sequences of $NL_{0}$ isometries that are concatenations of $\alpha$'s and $\beta$'s, i.e., 
\[
S_{0}:= \left\{ (\phi_{1}, \ldots, \phi_{NL_{0}})  \in G^{NL_{0}} : (\phi_{L_{0}(i-1)+1}, \ldots, \phi_{L_{0}i}) \in \{\alpha, \beta\}\,\,\textrm{for} \,\, i =1, \ldots, N\right\}.
\]
Note that $\#S_{0} = 2^{N}$ is greater than $N$. We claim that for each $m>0$, the set \[
S_{0}^{(m)} := \Big\{ \textrm{$m$-self-concatenations of $s \in S_{0}$}\Big\} = \Big\{ \big(\underbrace{s, \ldots, s}_{\textrm{$m$ times}}\big) : s \in S_{0}\Big\}
\]
is a $((K_{0} L_{0} + K_{0})N + K_{2} + K_{4})$-Schottky set.

\emph{Step 1: Investigating $\Gamma^{m}(s)$}.

Pick $s =(\phi_{1}, \ldots, \phi_{NL_{0}})$ and $s' = (\phi_{1}', \ldots, \phi_{NL_{0}}')$ in $S_{0}$. Recall the notation \[
x_{nNL_{0} + i}(s) :=  (\phi_{1} \cdots \phi_{NL_{0}})^{n} \phi_{1} \cdots \phi_{i} o
\] for $n \in \Z$ and $i = 0, \ldots, NL_{0}-1$. We now define ``sub-axes''\[\begin{aligned}
\Gamma_{i}(s) := \left(x_{L_{0}(i-1)}(s), \ldots, x_{L_{0}i}(s)\right),\\
\Gamma_{-i}(s) := \left(x_{-L_{0}(i-1)}(s), \ldots, x_{-L_{0}i}(s)\right)
\end{aligned}
\] for each $i >0$. These are translates of $\Gamma^{\pm}(\alpha)$ and $\Gamma^{\pm}(\beta)$. Our initial choices of $K_{0}$ and $L_{0}$ guarantee that: \begin{itemize}
\item $\Gamma_{i}(s)$ is a $K_{0}$-contracting axis whose domain is longer than $L_{2}, L_{3}, L_{3}'$ and $L_{4}$ for each $i \in \Z$;
\item $(\Gamma_{i}(s), \Gamma_{i+1}(s))$ and $(\Gamma_{-i}(s'), \Gamma_{-(i+1)}(s'))$ are $K_{0}$-aligned for each $i > 0$. Moreover, $(\bar{\Gamma}_{-1}(s'), \Gamma_{1}(s))$ is $K_{0}$-aligned. 
\end{itemize}
Lemma \ref{lem:BGIPConcatContracting} tells us that $\cup_{i > 0} \Gamma_{i}(s)$ is a $K_{4}$-contracting axis. In particular, $\Gamma^{m}(s)$ is a $K_{4}$-contracting axis for each $m>0$. Similarly, $\Gamma^{-m}(s')$ is a $K_{4}$-contracting axis for each $m>0$.

Now note that the following sequence of sub-axes is $K_{0}$-aligned: \[
(\ldots, \bar{\Gamma}_{-2}(s'), \bar{\Gamma}_{-1}(s'), \Gamma_{1}(s), \Gamma_{2}(s), \ldots).
\]
Let $i > 0$ and let $p \in \Gamma_{-i} (s')$. Then Proposition \ref{prop:BGIPWitness} tells us that $d\big(p, \Gamma_{1}(s)\big) < d\big(p, \Gamma_{j}(s)\big)$ for each $j>1$ and that $\big(p, \Gamma_{1}(s)\big)$ is $K_{2}$-aligned. It follows that $\big(p, \cup_{i > 0} \Gamma_{i}(s)\big)$ is $K_{2}$-aligned. For this reason (and its symmetric counterpart),  $(\bar{\Gamma}^{-m}(s'), \Gamma^{m}(s))$ is $K_{2}$-aligned for each $m>0$.

Next, fix $x \in X$ and consider the condition\begin{equation}\label{eqn:SchottkyIneq1}
\big(x, \Gamma_{N}(s)\big)\,\,\textrm{is $K_{2}$-aligned}.
\end{equation}
If Condition \ref{eqn:SchottkyIneq1} holds, then for each $i>N$ \[
\big(x, \Gamma_{N}(s), \Gamma_{N+1}(s) \ldots, \Gamma_{i} (s)\big)
\]
is $K_{2}$-aligned and $d(x, \Gamma_{N}(s)) < d(x, \Gamma_{i}(s))$ holds by Proposition \ref{prop:BGIPWitness}. Hence, $\pi_{\cup_{i > 0} \Gamma_{i}(s)}(x)$ is contained in $\Gamma_{1}(s) \cup \cdots \cup \Gamma_{N}(s)$. Meanwhile, recall that for each $i$, $\Gamma_{i}(s)$ is a $K_{0}$-quasigeodesic whose domain is $L_{0}$-long. Hence, we have \[
\diam(\Gamma_{i}(s)) \le K_{0} \cdot (\textrm{length of the domain of $s$}) + K_{0} = K_{0} L_{0} + K_{0}.
\]
Combining these ingredients, we observe that  \[
\diam\left(\pi_{\Gamma^{m}(s)} (x) \cup o\right) \le \diam(\Gamma_{1}(s)) + \ldots + \diam(\Gamma_{N}(s)) \le  (K_{0}L_{0} + K_{0})N
\]
holds for every $m>0$. For a similar reason, the condition \begin{equation}\label{eqn:SchottkyIneq2}
\big(x, \Gamma_{-N}(s)\big) \,\,\textrm{is $K_{2}$-aligned}
\end{equation}
implies $\diam\left( \pi_{\Gamma^{m}(s)}(x) \cup o \right) \le (K_{0} L_{0} + K_{0})N$ for all $m <0$. In summary,

\begin{obs}\label{obs:Schottky1}
If $s \in S_{0}^{(m)}$ satisfies Condition \ref{eqn:SchottkyIneq1} and \ref{eqn:SchottkyIneq2}, then $\big(x, \Gamma^{m}(s)\big)$ is $(K_{0} L_{0} + K_{0})N$-aligned for all $m \in \Z$. 
\end{obs}

\emph{Step 2. Comparing two distinct axes.}

We now pick $m>0$ and consider an element of $S_{0}^{(m)}$ which violates these conditions.

\begin{obs}\label{obs:Schottky2}
If $s = (\phi_{1}, \ldots, \phi_{mNL_{0}}) \in S_{0}^{(m)}$ violates Condition \ref{eqn:SchottkyIneq1}, then all the other elements $s' = (\phi_{1}', \ldots, \phi_{mNL_{0}}') \in S_{0}^{(m)}$ satisfy Condition \ref{eqn:SchottkyIneq1} and Condition \ref{eqn:SchottkyIneq2}. 
\end{obs}

To show this, let $k \in \{1, \ldots, N\}$ be the first index such that $(\phi_{L_{0}(k-1)+1}, \ldots, \phi_{L_{0}k})$ and $(\phi_{L_{0}(k-1) + 1}', \ldots, \phi_{L_{0}k}')$ differ. Let us denote $x_{i}(s)$ by $x_{i}$ and $x_{i}(s')$ by $x_{i}'$. Note that the path \[
\left(x_{NL_{0}}, \,\, x_{NL_{0}-1}, \,\, \ldots, \,\,x_{(k-1)L_{0}} = x'_{(k-1)L_{0}},\,\, x'_{(k-1)L_{0}+1}, \,\, \ldots, \,\,x'_{NL_{0}}\right)
\]
is the concatenation of $K_{0}$-aligned $K_{0}$-contracting axes \[
(\eta_{i})_{i=1}^{2(N-k+1)}:= \left(\bar{\Gamma}_{N}(s), \bar{\Gamma}_{N-1}(s), \ldots, \bar{\Gamma}_{k}(s), \Gamma_{k}(s'), \ldots, \Gamma_{N}(s')\right).
\] Recall that $s$ violates Condition \ref{eqn:SchottkyIneq1}: $(\bar{\Gamma}_{N}(s), x) = (\eta_{1}, x)$ is not $K_{2}$-aligned. Since $(\eta_{1}, \eta_{2})$ is $K_{0}$-aligned, Lemma \ref{lem:1segmentVar} tells us that $(x, \eta_{2})$ is $K_{1}$-aligned. Then $(x, \eta_{2}, \ldots, \eta_{2(N-k+1)})$ is $K_{1}$-aligned and Proposition \ref{prop:BGIPWitness} tells us that $(x, \eta_{2(N-k+1)}) = (x, \Gamma_{N}(s'))$ is $K_{2}$-aligned. Hence, $s'$ satisfies Condition \ref{eqn:SchottkyIneq1}.

Similarly, by considering the $K_{0}$-aligned sequence \[
\big(\bar{\Gamma}_{N}(s), \,\,\bar{\Gamma}_{N-1}(s),\,\, \ldots,\,\, \bar{\Gamma}_{1}(s), \,\,\Gamma_{-1}(s'), \,\,\Gamma_{-2}(s'),\,\, \ldots, \,\,\Gamma_{-N}(s') \big),
\]
we can deduce that $(x, \Gamma_{-N}(s'))$ is $K_{2}$-aligned as desired.

A similar argument leads to the following.
\begin{obs}\label{obs:Schottky3}
If $s \in S_{0}^{(m)}$ violates Condition \ref{eqn:SchottkyIneq2}, then all the other elements in $S_{0}^{(m)}$ satisfy Condition \ref{eqn:SchottkyIneq1} and Condition \ref{eqn:SchottkyIneq2}.
\end{obs}

\emph{Step 3: Summary}.

We claim that $S_{0}^{(m)}$ is $((K_{0} L_{0} + K_{0})N + K_{2}+K_{4})$-Schottky. The first and the third requirements for Schottky sets were already observed before, so it remains to discuss the second requirement. Considering Observation \ref{obs:Schottky1}, it suffices to show that Condition \ref{eqn:SchottkyIneq1} and Condition \ref{eqn:SchottkyIneq2} are satisfied by all but at most 1 element of $S_{0}^{(m)}$. Observation \ref{obs:Schottky2} and \ref{obs:Schottky3} imply that this is the case.

Given these observations, we can finish the proof by taking $K = (K_{0} L + K_{0} )N + K_{2} + K_{4}$, $m =L$ and by taking any subset $S \subseteq S_{0}^{(m)}$ such that $\#S = N$.
\end{proof}

\section{Pivoting and limit laws} \label{section:pivoting}

In this section, we establish the notion of pivotal times and pivoting. We will then deduce CLT, LIL and geodesic tracking of random walks using probabilistic estimates about pivotal times. The proof of a key probabilistic estimate will be postponed to Section \ref{section:pivotConst}.

\subsection{Pivotal times: statement}\label{subsection:pivotState}

Let $\mu$ be a non-elementary probability measure on $G$ and let $S$ be a long enough and large Schottky set for $\mu$. Then for sufficiently small $\epsilon > 0$, an $n$-step random path $(g_{1}, \ldots, g_{n})$ in the $\mu$-random walk contains at least $\epsilon n$ subsegments \[
(g_{j(i)- M_{0}+1}, \ldots, g_{j(i)}) \in S \quad (i = 1, \ldots, \epsilon n).
\]
The appearance of Schottky sequences in a random path does not necessarily imply something about $Z_{n} = g_{1} \cdots g_{n}$. For example, every Schottky sequence might be cancelled out with the next step, resulting in $Z_{n} = id$. We nonetheless claim that for a high probability, certain number of Schottky axes survive. More explicitly, we seek indices $j(1) < \ldots < j(M)$, called the \emph{pivotal times}, such that the Schottky axes arising at these indices are aligned along $[o, Z_{n} o]$: \[\begin{aligned}
(o, \axes_{j(1)}, \ldots, \axes_{j(M)}, Z_{n} o) \,\,\textrm{is aligned, where} \axes_{j(k)} = (Z_{j(k) - M_{0}} o, \ldots, Z_{j(k)}o).
\end{aligned}
\]
We will observe that for a high probability, a random path has sufficiently many pivotal times. Then, we will freeze the steps except at the pivotal slots and choose the Schottky sequences at the pivotal times from $S$. More explicitly, we will realize a structure where $\axes_{j(k)}$'s are i.i.d.s on the uniform measure on $\{\Gamma(s) : s \in S\}$: once this is guaranteed, we can control the direction $[o, Z_{n} o]$ and establish the deviation inequality.

We now formulate the discussion above.

\begin{definition}\label{dfn:pivotalEquiv}
Let $\mu$ be a non-elementary probability measure on $G$, let $(\Omega, \Prob)$ be a probability space for $\mu$, let $K_{0}, M_{0}>0$ and let $S$ be a long enough $K_{0}$-Schottky set contained in $(\supp \mu)^{M_{0}}$, i.e., $M_{0}$ is as large as described in Definition \ref{dfn:SchottkyLong}.

A subset $\mathcal{E}$ of $\Omega$, accompanied with the choice of a subset $\diffPivot(\mathcal{E}) = \{j(1) < j(2) < \ldots\}\subseteq M_{0} \Z_{>0}$, is called a \emph{pivotal equivalence class} if: \begin{enumerate}
\item for each $i \notin \{j(k) - l : k \ge 1, l =0, \ldots, M_{0} - 1 \}$, $g_{i}(\w)$ is fixed on $\mathcal{E}$;
\item for each $\w \in \mathcal{E}$ and $k \ge 1$, the following is a Schottky sequence: \[
s_{k}(\w) := \big(g_{j(k) - M_{0} + 1}(\w),\, g_{j(k) - M_{0} + 2}(\w), \, \ldots, \, g_{j(k)} (\w) \big) \in S;
\]
\item for each $\w \in \mathcal{E}$, $(o, \, \axes_{j(1)} (\w), \, \axes_{j(2)} (\w), \, \ldots)$ is $D_{0}$-semi-aligned, and
\item on $\mathcal{E}$, $\{s_{1}(\w), s_{2}(\w), \ldots \}$ are i.i.d.s distributed according to the uniform measure on $S$.
\end{enumerate}
We say that $\mathcal{P}(\mathcal{E})$ is the \emph{set of pivotal times} for $\mathcal{E}$.

When a pivotal equivalence class $\mathcal{E} \subseteq \Omega$ is understood, with the set of pivotal times $\mathcal{P}(\mathcal{E})$, for each element $\w$ of $\mathcal{E}$ we call $\diffPivot(\mathcal{E})$ the \emph{set of pivotal times for $\w$} and write it as $\diffPivot(\w)$.
\end{definition}

When the probability space $(\Omega, \Prob)$ for $\mu$ is partitioned into pivotal equivalence classes $\{\mathcal{E}_{\alpha}\}_{\alpha}$, then belonging to the same $\mathcal{E}_{\alpha}$ becomes an equivalence relation. Choosing a different element from the same pivotal equivalence class is called \emph{pivoting}. But note that the choice of pivotal equivalence classes is not canonical: given an $\w \in \Omega$, there are several ways to define the pivotal equivalence class for $\w$. Proposition \ref{prop:gouezelRW1} below describes  a particular choice of pivotal equivalence classes that will be useful.

Let $k$ be a positive integer. We say that a pivotal equivalence class $\mathcal{E}$ \emph{avoids $k$} if $k$ is not in $\{j - l : j \in \diffPivot(\mathcal{E}), \, l = 0, \ldots, M_{0} - 1\}$; in this case, $g_{k}$ is fixed on $\mathcal{E}$.

\begin{prop}\label{prop:gouezelRW1}
Let $\mu$ be a non-elementary probability measure on $G$ and let $S$ be a long enough and large Schottky set for $\mu$. Then there exist a probability space $(\Omega, \Prob)$ for $\mu$ and a constant $K>0$ such that, for each $n \ge 0$, we have a measurable partition $\mathscr{P}_{n} = \{\mathcal{E}_{\alpha}\}_{\alpha}$ of $\Omega$ into pivotal equivalence classes avoiding $1, \ldots, \lfloor n/2 \rfloor + 1$ and $n+1$ that satisfies \begin{equation}\label{eqn:gouezelRWDecay}
\Prob\big(\w : \# (\diffPivot(\w) \cap \{1, \ldots, k\}) \le k/K \, \big| \, g_{1}, \ldots, g_{\lfloor n/2 \rfloor + 1}, g_{n+1} \big) \le K e^{-k/K}
\end{equation}
for each choice of $g_{1}, \ldots, g_{\lfloor n/2 \rfloor + 1}, g_{n+1} \in G$ and $k \ge n$.
\end{prop}

We postpone the proof of Proposition \ref{prop:gouezelRW1} to the next section and  first see its consequence.

\subsection{Pivoting} \label{subsection:pivoting}

Let $K_{0}, N_{0}>0$ and let $S$ be a long enough $K_{0}$-Schottky set with cardinality $N_{0}$. Given isometries $u_{i}$'s, let us draw a choice $s = (s_{1}, s_{2}, \ldots, s_{n})$ from $S^{n}$ with the uniform measure and define \[
U_{n} = u_{0} \Pi(s_{1}) u_{1} \Pi(s_{2})u_{2}\cdots \Pi(s_{n}) u_{n}.
\]
Let $\kappa_{i} := U_{i-1} \Gamma^{+}(s_{i}) =  u_{0} \Pi(s_{1}) \cdots u_{i-1} \Gamma^{+}(s_{i})$. We claim that:

\begin{lem}\label{lem:pivotingAlign1}
We have  \[\begin{aligned}
\Prob \Big( (x, \kappa_{i})\,\,\textrm{is $K_{0}$-aligned for some}\,\, i \le k \Big) \ge 1 - (1/N_{0})^{k}, \\
\Prob \Big( (\kappa_{n-i+1}, U_{n} x)\,\,\textrm{is $K_{0}$-aligned for some}\,\, i \le k \Big) \ge 1 - (1/N_{0})^{k}
\end{aligned}
\]
for each $1 \le k \le n$ and $x \in X$.
\end{lem}

\begin{proof}
We prove the first estimate only; the second one follows similarly. Consider the statement \[
\big( u_{0}^{-1} x, \Gamma^{+}(s_{1})\big) \,\,\textrm{is $K_{0}$-aligned}.
\]
Thanks to the Schottky property, at most 1 choice of $s_{1}$ from $S$ violates this statement. Fixing that bad choice, consider the statement \[
\big( (u_{0} \Pi(s_{1}) u_{1})^{-1} x, \Gamma^{+}(s_{2}) \big) \,\,\textrm{is $K_{0}$-aligned}.
\]
Again, at most 1 choice of $s_{2}$ from $S$ violates this. Keeping this manner, we conclude the following: except at most 1 bad choice among $S^{k}$, \[
\big((u_{0} \Pi(s_{1}) \cdots u_{i-1})^{-1} x, \Gamma^{+}(s_{i})\big) \,\,\textrm{is $K_{0}$-aligned, i.e.,}\,\, \left(x, \kappa_{i} \right)\,\,\textrm{is $K_{0}$-aligned}
\]
holds for at least one $i \le k$. This happens for probability at least $1- (1/N_{0})^{k}$.
\end{proof}

Now fix another set of isometries $\check{u}_{i}$'s and another $K_{0}$-Schottky set $\check{S}$ with cardinality $N_{0}$. We draw  $\check{s} = (\check{s}_{1}, \check{s}_{2}, \ldots, \check{s}_{n})$ from $\check{S}^{n}$ with the uniform measure, independently from $s$, and define \[
\Check{U}_{n} = \check{u}_{0} \prodSeq(\check{s}_{1}) \check{u}_{1} \cdots \prodSeq(\check{s}_{n}) \check{u}_{n}.
\]
Let $\eta_{i} := \check{U}_{i-1} \Gamma^{+}(\check{s}_{i})$. Recall that $\bar{\eta}_{i}$ denotes the reversal of $\eta_{i}$.

\begin{lem}\label{lem:pivotingAlign2}
We have \[\begin{aligned}
\Prob \Big( (\bar{\eta}_{i}, \kappa_{i})\,\,\textrm{is $D_{0}$-aligned for some}\,\, i \le k \Big) \ge 1 - (2/N_{0})^{k}
\end{aligned}
\]
for each $1 \le k \le n$.
\end{lem}

\begin{proof}
Consider the statements \[\begin{aligned}
\big( u_{0}^{-1} \check{u}_{0}\cdot o, \,\,\Gamma^{+}(s_{1})\big) \,\,\textrm{is $K_{0}$-aligned},\\
\big( \check{u}_{0}^{-1} u_{0} \Pi(s_{1}) \cdot o,\,\, \Gamma^{+}(\check{s}_{1})\big) \,\,\textrm{is $K_{0}$-aligned}.
\end{aligned}
\]
Thanks to the Schottky property, at most 1 choice of $s_{1}$ from $S$ violates the first statement. Similarly, given $s_{1}$, at most 1 choice of $\check{s}_{1}$ from $\check{S}$ violates the second statement. In short, the two statements hold for all but at most $2N_{0}$ choices of $(s_{1}, \check{s}_{1}) \in S\times \check{S}$.

Fixing a bad choice $(s_{1}, \check{s}_{1})$, consider the statements \[
\begin{aligned}
\big( (u_{0} \prodSeq(s_{1}) u_{1})^{-1} \check{u}_{0} \prodSeq(\check{s}_{1}) \check{u}_{1} \cdot o,\,\, \Gamma^{+}(s_{2}) \big) \,\,\textrm{is $K_{0}$-aligned}, \\
\big( (\check{u}_{0} \prodSeq(\check{s}_{1}) \check{u}_{1})^{-1} u_{0} \prodSeq(s_{1}) u_{1} \prodSeq(s_{2}) \cdot o,\,\, \Gamma^{+}(\check{s}_{2}) \big) \,\,\textrm{is $K_{0}$-aligned}.
\end{aligned}
\]
Again, at most $2N_{0}$ choices of $(s_{2}, \check{s}_{2}) \in S \times \check{S}$ violates the statements. Keeping this manner, we conclude the following: for probability at least $1 - (2/N_{0})^{k}$, there exists $i \le k$ such that \[\begin{aligned}
\big( (u_{0} \prodSeq(s_{1})  \cdots u_{i-1})^{-1} \check{u}_{0} \prodSeq(\check{s}_{1}) \cdots \check{u}_{i-1}\cdot o, \,\, \Gamma^{+}(s_{i}) \big) \,\, \textrm{is $K_{0}$-aligned}, \\
\big( (\check{u}_{0} \prodSeq(\check{s}_{1})  \cdots \check{u}_{i-1})^{-1} u_{0} \prodSeq(s_{1}) \cdots u_{i-1} \prodSeq(s_{i})\cdot o, \,\,\Gamma^{+}(\check{s}_{i}) \big) \,\, \textrm{is $K_{0}$-aligned}.
\end{aligned}
\]
In other words, $(\textrm{ending point of}\,\,\bar{\eta}_{i}, \kappa_{i})$ and $(\bar{\eta}_{i}, \textrm{ending point of}\,\, \kappa_{i})$ are $K_{0}$-aligned. Lemma \ref{lem:1segment} then tells us that $(\bar{\eta}_{i}, \kappa_{i})$ is $D_{0}$-aligned.
\end{proof}

Applying Lemma \ref{lem:pivotingAlign1} and \ref{lem:pivotingAlign2} to pivotal equivalence classes, we obtain the following corollaries.

\begin{cor}\label{cor:pivotingRW1}
Let $\mu$ be a non-elementary probability measure on $G$, let $K_{0}, N_{0}>0$ and let  $S$ be a long enough $K_{0}$-Schottky set for $\mu$ with cardinality $N_{0}$. Let $\mathcal{E}$ be a pivotal equivalence class for $\mu$ with $\diffPivot(\mathcal{E}) = \{j(1) < j(2) < \ldots\}$ and let $x \in X$. Then for each $k \ge 1$ we have \[
\Prob\left( \big(x, \,\axes_{j(k)}(\w),\, \axes_{j(k+1)}(\w), \ldots \big) \,\, \textrm{is $D_{0}$-semi-aligned} \, \Big| \, \mathcal{E} \right) \ge 1- (1/N_{0})^{k}.
\]
Moreover, for any $m\ge 1$, $n \ge j(m)$ and $k = 1, \ldots, m$, we have \[
\Prob\left( \big(\axes_{j(1)}(\w), \,\ldots, \,\axes_{j(m - k+1)}(\w),\, Z_{n}(\w) o \big) \,\, \textrm{is $D_{0}$-semi-aligned} \, \Big| \, \mathcal{E} \right) \ge 1- (1/N_{0})^{k}.
\]
\end{cor}

\begin{cor}\label{cor:pivotingRW2}
Let $\mu$ be a non-elementary probability measure on $G$ and let $\check{\mu}$ be its reflected version, let $K_{0}, N_{0}>0$ and let $S$ and $\check{S}$ be long enough $K_{0}$-Schottky sets for $\mu$ and $\check{\mu}$, respectively, with cardinality $N_{0}$. Let $\mathcal{E}$ be a pivotal equivalence class for $\mu$ with $\diffPivot(\mathcal{E}) = \{j(1) < j(2) < \ldots\}$, and let $\check{\mathcal{E}}$ be a pivotal equivalence class for $\check{\mu}$ with $\diffPivot(\check{\mathcal{E}}) = \{\check{j}(1) < \check{j}(2) < \ldots\}$.  Then we have  \[
\Prob\left( \big(\bar{\axes}_{j(k)}(\w),\, \axes_{\check{j}(k)}(\check{\w})\big) \,\, \textrm{is $D_{0}$-semi-aligned} \, \Big| \, \mathcal{E} \right) \ge 1- (2/N_{0})^{k}. \quad (\forall k > 0)
\]
\end{cor}

We now record a small consequence of pivoting.

\begin{cor}\label{cor:escapeInfty}
Let $(Z_{n})_{n>0}$ be the random walk generated by a non-elementary probability measure $\mu$ on $G$ with finite first moment. Then there exists a strictly positive quantity $\lambda(\mu) \in (0, +\infty]$, called the \emph{drift} of $\mu$, such that  \[
\lambda(\mu) := \lim_{n \rightarrow \infty} \frac{1}{n} d(o, Z_{n} o) \quad \textrm{almost surely.}
\]
\end{cor}

\begin{remark}
The statement in Corollary \ref{cor:escapeInfty} holds true even without the moment condition. This will be the consequence of Theorem \ref{thm:LDPStrong} in Section \ref{section:LDP}.
\end{remark}

\begin{proof}
By Kingman's subadditive ergodic theorem, $\lambda(\mu) = \lim_{n} \frac{1}{n} d(o, Z_{n} o)$ exists and is constant almost surely. It remains to show that $\lambda(\mu) > 0$.

Since $\mu$ is non-elementary, Proposition \ref{prop:Schottky} provides a long enough and large Schottky set $S$ for $\mu$. Given this, Proposition \ref{prop:gouezelRW1} provides a constant $K>0$ and a measurable partition $\mathscr{P} = \{\mathcal{E}_{\alpha}\}_{\alpha}$ into pivotal equivalence classes such that \[
\Prob\big(\w : \# (\mathcal{P}(\w) \cap \{1, \ldots, k\}) \le k/K \big) \le K e^{-k/K}
\]
for each $k$. Now let $n >0$ and let $\mathcal{E}$ be a pivotal equivalence class with $\diffPivot(\mathcal{E}) = \{j(1) < j(2) < \ldots\}$ such that $\# (\mathcal{P}(\mathcal{E}) \cap \{1, \ldots, n\}) \ge n/K$, i.e., $j(\lfloor n/ K \rfloor) \le n$. Corollary \ref{cor:pivotingRW1} tells us that \[
\Prob \Big( ( o, \axes_{j(1)} (\w), \ldots, \axes_{j(\lfloor n/2K \rfloor)}(\w), Z_{n} o)\,\textrm{is $D_{0}$-semi-aligned} \, \Big| \, \mathcal{E} \Big) \ge 1 - \left(1/\#S_{0}\right)^{-n/2K + 1}.
\]
By Lemma \ref{lem:SchottkyAlign}, we then have  \[
\Prob \Big( d(o, Z_{n} o) < 50 E_{0}  n/2K \, \Big| \, \mathcal{E} \Big) \le \left(1/\#S_{0}\right)^{-n/2K + 1}.
\]
We sum up these conditional probabilities on $\{\w : \#(\diffPivot(\mathcal{E} \cap \{1, \ldots, n\}) \ge n/K\}$ to conclude \[
\Prob \Big( d(o, Z_{n} o) < 50 E_{0}  n/2K \Big) \le \left(1/\#S_{0}\right)^{-n/2K + 1} + K e^{-n/K}.
\]
The Borel-Cantelli lemma then implies $d(o, Z_{n} o) \ge 50 E_{0} n/2K$ eventually almost surely.
\end{proof}

\subsection{Deviation inequality} \label{subsection:deviation}

Let $\mu$ be a non-elementary probability measure on $G$ and let $S$ be a long enough and large $K_{0}$-Schottky set contained in $(\supp \mu)^{M_{0}}$ for some $K_{0}, M_{0} > 0$. Consider a bi-infinite path $\big((Z_{n}(\w))_{n > 0}, (Z_{n}(\check{\w}))_{n > 0}\big)$ arising from the random walk generated by $\mu$. Recall: \[\begin{aligned}
\axes_{i}(\w) &:= (Z_{i-M_{0}} o, \,Z_{i-M_{0} + 1} o,\, \ldots, \,Z_{i} o), \\
\axes_{i}(\check{\w}) &:= (\check{Z}_{i-M_{0}} o, \,\check{Z}_{i-M_{0} + 1} o,\, \ldots, \,\check{Z}_{i} o).
\end{aligned}
\]
For each $k\ge M_{0}$, we investigate whether there exists $M_{0} \le i \le k$ such that: \begin{enumerate}
\item $(g_{i-M_{0} + 1}, \ldots, g_{i})$ is a Schottky sequence;
\item $(\check{Z}_{m} o, \axes_{i}(\w), Z_{n} o)$ is $D_{0}$-semi-aligned for all $n \ge k$ and $m \ge 0$.
\end{enumerate}
We define $\Devi = \Devi(\check{\w}, \w)$ as the minimal index $k$ with the auxiliary index $i \le k$ as described above. 

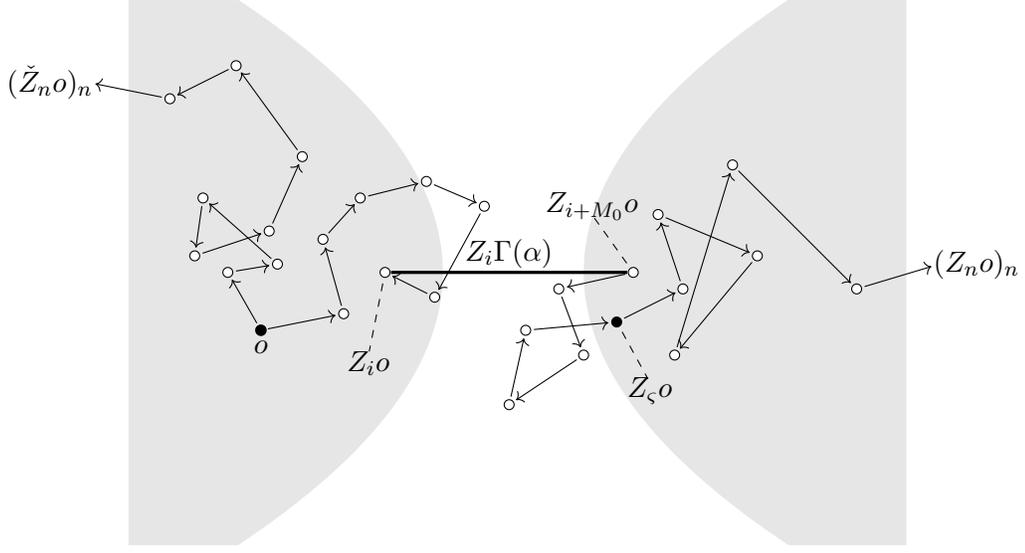
\begin{figure}
\begin{tikzpicture}
\def\c{1.1}

\fill[opacity=0.1, shift={(0, 0.7*\c)}] (-1.6*\c, 3.3*\c) -- (-1.6*\c, -3.3*\c) -- plot[smooth,domain=-3.3:3.3] (2.2*\c - 0.25*\x*\x, \x*\c) -- cycle;

\fill[opacity=0.1, shift={(0, 0.7*\c)}] (7.8*\c, 3.3*\c) -- (7.8*\c, -3.3*\c) -- plot[smooth,domain=-3.3:3.3] (3.9*\c + 0.25*\x*\x, \x*\c) -- cycle;

\fill (0, 0) circle (0.07*\c);
\draw[fill=black!0] (1*\c, 0.2*\c) circle (0.06*\c);
\draw[fill=black!0] (0.75*\c, 1.1*\c) circle (0.06*\c);
\draw[fill=black!0] (1.2*\c, 1.6*\c) circle (0.06*\c);
\draw[fill=black!0] (2*\c, 1.8*\c) circle (0.06*\c);
\draw[fill=black!0] (2.7*\c, 1.5*\c) circle (0.06*\c);
\draw[fill=black!0] (2.1*\c, 0.4*\c) circle (0.06*\c);
\draw[fill=black!0] (1.5*\c, 0.7*\c) circle (0.06*\c);
\draw[very thick, shorten >=0.08cm,shorten <=0.08cm] (1.5*\c, 0.7*\c) -- (4.5*\c, 0.7*\c);

\draw[shorten >=0.11cm,shorten <=0.095cm,->] (0, 0) -- (1*\c, 0.2*\c);
\draw[shorten >=0.11cm,shorten <=0.11cm,->] (1*\c, 0.2*\c) -- (0.75*\c, 1.1*\c);
\draw[shorten >=0.11cm,shorten <=0.11cm,->] (0.75*\c, 1.1*\c) -- (1.2*\c, 1.6*\c);
\draw[shorten >=0.11cm,shorten <=0.11cm,->] (1.2*\c, 1.6*\c) -- (2*\c, 1.8*\c);
\draw[shorten >=0.11cm,shorten <=0.11cm,->] (2*\c, 1.8*\c) -- (2.7*\c, 1.5*\c);
\draw[shorten >=0.11cm,shorten <=0.11cm,->] (2.7*\c, 1.5*\c) -- (2.1*\c, 0.4*\c);
\draw[shorten >=0.11cm,shorten <=0.11cm,->] (2.1*\c, 0.4*\c) -- (1.5*\c, 0.7*\c);

\draw[fill=black!0] (4.5*\c, 0.7*\c) circle (0.06*\c);
\draw[fill=black!0] (3.6*\c, 0.5*\c) circle (0.06*\c);
\draw[fill=black!0] (3.9*\c, -0.3*\c) circle (0.06*\c);
\draw[fill=black!0] (3*\c, -0.9*\c) circle (0.06*\c);
\draw[fill=black!0] (3.2*\c, 0) circle (0.06*\c);
\draw[fill=black] (4.3*\c, 0.1*\c) circle (0.06*\c);
\draw[fill=black!0] (5.1*\c, 0.5*\c) circle (0.06*\c);
\draw[fill=black!0] (4.8*\c, 1.4*\c) circle (0.06*\c);
\draw[fill=black!0] (6*\c, 0.9*\c) circle (0.06*\c);
\draw[fill=black!0] (5*\c, -0.3*\c) circle (0.06*\c);
\draw[fill=black!0] (5.7*\c, 2*\c) circle (0.06*\c);
\draw[fill=black!0] (7.2*\c, 0.5*\c) circle (0.06*\c);

\draw[shorten >=0.11cm,shorten <=0.11cm,->] (4.5*\c, 0.7*\c) -- (3.6*\c, 0.5*\c);
\draw[shorten >=0.11cm,shorten <=0.11cm,->] (3.6*\c, 0.5*\c) -- (3.9*\c, -0.3*\c);
\draw[shorten >=0.11cm,shorten <=0.11cm,->] (3.9*\c, -0.3*\c) -- (3*\c, -0.9*\c) ;
\draw[shorten >=0.11cm,shorten <=0.11cm,->] (3*\c, -0.9*\c) -- (3.2*\c, 0);
\draw[shorten >=0.11cm,shorten <=0.11cm,->] (3.2*\c, 0) -- (4.3*\c, 0.1*\c);
\draw[shorten >=0.11cm,shorten <=0.11cm,->] (4.3*\c, 0.1*\c) -- (5.1*\c, 0.5*\c);
\draw[shorten >=0.11cm,shorten <=0.11cm,->] (5.1*\c, 0.5*\c) -- (4.8*\c, 1.4*\c);
\draw[shorten >=0.11cm,shorten <=0.11cm,->] (4.8*\c, 1.4*\c) -- (6*\c, 0.9*\c);
\draw[shorten >=0.11cm,shorten <=0.11cm,->] (6*\c, 0.9*\c) -- (5*\c, -0.3*\c);
\draw[shorten >=0.11cm,shorten <=0.11cm,->] (5*\c, -0.3*\c) -- (5.7*\c, 2*\c);
\draw[shorten >=0.11cm,shorten <=0.11cm,->] (5.7*\c, 2*\c) -- (7.2*\c, 0.5*\c);
\draw[shorten >=0.11cm,shorten <=0.11cm,->] (7.2*\c, 0.5*\c) -- (8.2*\c, 0.8*\c);

\draw[fill=black!0] (-0.4*\c, 0.7*\c) circle (0.06*\c);
\draw[fill=black!0] (0.2*\c, 0.8*\c) circle (0.06*\c);
\draw[fill=black!0] (-0.7*\c, 1.6*\c) circle (0.06*\c);
\draw[fill=black!0] (-0.8*\c, 0.9*\c) circle (0.06*\c);
\draw[fill=black!0] (0.1*\c, 1.2*\c) circle (0.06*\c);
\draw[fill=black!0] (0.5*\c, 2.1*\c) circle (0.06*\c);
\draw[fill=black!0] (-0.3*\c, 3.2*\c) circle (0.06*\c);
\draw[fill=black!0] (-1.1*\c, 2.8*\c) circle (0.06*\c);

\draw[shorten >=0.11cm,shorten <=0.11cm,->] (0, 0) --(-0.4*\c, 0.7*\c) ;
\draw[shorten >=0.11cm,shorten <=0.11cm,->] (-0.4*\c, 0.7*\c) -- (0.2*\c, 0.8*\c) ;
\draw[shorten >=0.11cm,shorten <=0.11cm,->] (0.2*\c, 0.8*\c) -- (-0.7*\c, 1.6*\c);
\draw[shorten >=0.11cm,shorten <=0.11cm,->] (-0.7*\c, 1.6*\c) -- (-0.8*\c, 0.9*\c);
\draw[shorten >=0.11cm,shorten <=0.11cm,->] (-0.8*\c, 0.9*\c) -- (0.1*\c, 1.2*\c);
\draw[shorten >=0.11cm,shorten <=0.11cm,->] (0.1*\c, 1.2*\c)-- (0.5*\c, 2.1*\c);
\draw[shorten >=0.11cm,shorten <=0.11cm,->] (0.5*\c, 2.1*\c) -- (-0.3*\c, 3.2*\c);
\draw[shorten >=0.11cm,shorten <=0.11cm,->] (-0.3*\c, 3.2*\c) -- (-1.1*\c, 2.8*\c);
\draw[shorten >=0.11cm,shorten <=0.11cm,->] (-1.1*\c, 2.8*\c) -- (-2.1*\c, 3*\c);

\draw (0,0) node[below] {$o$};
\draw[dashed, shorten >=0.05cm, shorten <=0.15cm] (1.5*\c, 0.7*\c) -- (1.3*\c, -0.3*\c);
\draw (1.3*\c, -0.4*\c) node{$Z_{i} o$};
\draw[dashed, shorten >=0.05cm, shorten <=0.15cm] (4.5*\c, 0.7*\c) -- (4*\c, 1.4*\c);
\draw (4*\c, 1.5*\c) node {$Z_{i+M_{0}} o$};
\draw (3*\c, 0.93*\c) node {$Z_{i} \Gamma^{+}(\alpha)$};
\draw[dashed, shorten >=0.05cm, shorten <=0.15cm] (4.3*\c, 0.1*\c) -- (4.7*\c, -0.63*\c);
\draw (4.7*\c, -0.73*\c) node {$Z_{\DeviUni}o$};
\draw (8.65*\c, 0.8*\c) node {$(Z_{n}o)_{n}$};
\draw (-2.55*\c, 3*\c) node {$(\check{Z}_{n} o)_{n}$};

\end{tikzpicture}
\caption{Persistent progress and $\Devi$. Here, all of the backward loci $(\check{Z}_{n} o)_{n \ge 0}$ are on the left of the persistent progress $Z_{i}\Gamma^{+}(\alpha)$, while the forward loci after $Z_{\DeviUni} o$ are all on the right.}
\label{fig:Devi}
\end{figure}

A motivating observation for the definition of $\Devi(\check{\w}, \w)$ is as follows. 

\begin{lem}\label{lem:DeviGromProd}
Let $\Omega = G^{\Z_{>0}} \times G^{\Z_{>0}}$ be the space of (bi-directional) step paths in $G$, let $K_{0}>0$ and let $S$ be a long enough $K_{0}$-Schottky set. Then for each $(\check{\w}, \w) \in \Omega$, we have \[
(\check{Z}_{m} o, Z_{n} o)_{o} \le d(o, Z_{k} o)
\]
for all $m\ge 0$ and $n, k \ge \Devi(\check{\w}, \w)$.
\end{lem}

\begin{proof}
Let $i \le \Devi(\check{\w}, \w)$ be the index such that $(\check{Z}_{m'} o, \axes_{i}(\w), Z_{n'} o)$ is $D_{0}$-semi-aligned for all $n' \ge \Devi(\check{\w}, \w)$ and $m' \ge 0$. Lemma \ref{lem:SchottkyAlign} tells us that \[\begin{aligned}
d(\check{Z}_{m'} o, Z_{n'} o) &\ge d(\check{Z}_{m'} o, Z_{i-M_{0}} o) + d(Z_{i-M_{0}} o, Z_{i} o) +  d(Z_{i} o, Z_{n'} o) -  E_{0},\\
d(\check{Z}_{m'} o, Z_{n'} o) &\ge d(\check{Z}_{m'} o, Z_{i-M_{0}} o) + d(Z_{i} o, Z_{n'} o) + 50E_{0}. 
\end{aligned}  \quad (n' \ge \Devi(\check{\w}, \w), m' \ge 0)
\]
Let us now pick $n, k \ge \Devi(\check{\w}, \w)$ and $m \ge 0$. Then we have \begin{align}
\begin{aligned}
d(\check{Z}_{m} o, Z_{n} o) &\ge d(\check{Z}_{m} o, Z_{i-M_{0}} o) + d(Z_{i-M_{0}} o, Z_{i} o) +  d(Z_{i} o, Z_{n} o) -  E_{0}\\
&\ge d(\check{Z}_{m} o, Z_{i-M_{0}} o) + d(Z_{i-M_{0}} o, Z_{n} o) -  E_{0},\\ \label{eqn:deviDomDist}
d( o, Z_{k} o) &\ge d( o, Z_{i-M_{0}} o) + d(Z_{i} o, Z_{k} o) + 50E_{0} \ge d(o, Z_{i-M_{0}} o) + 50E_{0}.
\end{aligned} 
\end{align}
Hence,
\[\begin{aligned}
2(\check{Z}_{m} o, Z_{n} o)_{o} &= d(\check{Z}_{m} o, o) + d(o, Z_{n} o) - d(\check{Z}_{m} o, Z_{n} o) \\
&\le \big(d(\check{Z}_{m} o, Z_{i-M_{0}} o) +d(Z_{i-M_{0}} o, o)\big) + \big(d(o, Z_{i-M_{0}} o) + d(Z_{i-M_{0}} o, Z_{n} o)\big)\\
& \quad - \big(d(\check{Z}_{m} o, Z_{i-M_{0}} o) + d(Z_{i-M_{0}} o, Z_{n} o) -  E_{0}\big) \\
&\le 2d(o, Z_{i-M_{0}} o) + E_{0} \le 2d(o, Z_{k} o). \qedhere
\end{aligned}
\]
\end{proof}

We now provide a probabilistic estimate for $\Devi(\check{\w}, \w)$.

\begin{lem}\label{lem:Devi}
Let $\mu$ be a non-elementary probability measure on $G$, let $K_{0}>0$ and let $S$ be a long enough and large $K_{0}$-Schottky set for $\mu$. Then there exists $K'>0$ such that 
\begin{equation}\label{eqn:DeviDecays1}
\Prob\left(\Devi(\check{\w}, \w) \ge k \, \Big|\, g_{k+1}, \check{g}_{1}, \ldots, \check{g}_{k+1}\right) \le K' e^{-k/K'}
\end{equation}
holds for all $k \ge 0$ and all choices of $g_{k+1}, \check{g}_{1}, \ldots, \check{g}_{k+1} \in G$.
\end{lem}

\begin{proof}
Let $S$ be a long enough and large $K_{0}$-Schottky set in $(\supp \mu)^{M_{0}}$ for some $M_{0}>0$. Let $\check{S}$ be the reflected version of $S$, that means, \[
\check{S} := \big\{ \big(s_{M_{0}}^{-1}, \ldots, s_{1}^{-1}\big) : (s_{1}, \ldots, s_{M_{0}}) \in S\big\}.
\]
Then $\check{S}$ is a long enough and large $K_{0}$-Schottky set for $\check{\mu}$. Let $K>0$ be the constant determined for $S$ and $\check{S}$ in Proposition \ref{prop:gouezelRW1}. We now fix $k$ and $g_{k+1}, \check{g}_{1}, \ldots, \check{g}_{k+1} \in G$.

Let $\mathscr{P}_{k} = \{\mathcal{E}_{\alpha}\}_{\alpha}$ be the partition of $\Omega$ into pivotal equivalence classes avoiding $1, \ldots, \lfloor k/2 \rfloor+1$ and $k+1$, given by Proposition \ref{prop:gouezelRW1}. Let also $\check{\mathscr{P}}_{2k} = \{\check{\mathcal{E}}_{\alpha}\}_{\alpha}$ be the partition of $\check{\Omega}$ into pivotal equivalence classes avoiding $1, \ldots, k+1$ and $2k+1$, given by Proposition \ref{prop:gouezelRW1}. We have \[\begin{aligned}
\Prob\Big(A := \big\{\w : \#(\diffPivot (\w)  \cap \{1, \ldots, n\}) \ge n/K \,\, \textrm{for all $n \ge k$} \big\} \Big) &\ge 1 - \frac{K}{1-e^{-1/K}} e^{-k/K}, \\
\Prob\Big(\check{A} := \big\{\check{\w}: \# (\diffPivot(\check{\w}) \cap \{1, \ldots, n\}) \ge n/K \,\,\textrm{for all $n \ge 2k$}\big\}\Big) &\ge 1- \frac{K}{1-e^{-1/K}} e^{-2k/K}.
\end{aligned}
\]
Let us enumerate $\mathcal{P}(\w)$ by $\{j(1) < j(2) < \ldots \}$, and $\mathcal{P}(\check{\w})$ by $\{\check{j}(1) < \check{j}(2) < \ldots\}$. Let $\mathcal{E} \in \mathscr{P}_{k}$ and $\check{\mathcal{E}} \in \check{\mathscr{P}}_{2k}$ be pivotal equivalence classes in $A$ and $\check{A}$, respectively. In $\check{\mathcal{E}}\times \mathcal{E}$, let $B$ be the set of $(\check{\w}, \w)$ that satisfies the following: \begin{enumerate}
\item for $x \in \{o, \check{Z}_{1} o, \ldots, \check{Z}_{2k} o\}$, the following sequence is $D_{0}$-semi-aligned: \[
\big(x, \,\axes_{j(\lceil k/3K\rceil )}(\w),  \,\axes_{j(\lceil k/3K\rceil + 1)}(\w), \,\ldots\big);
\]
\item for each $n \ge k$ and $m \ge 2k$, the following are $D_{0}$-semi-aligned: \[\begin{aligned}
&\big(o, \,\axes_{j(1)}(\w), \, \axes_{j(2)}(\w),  \, \ldots, \, \axes_{j(\lceil 2n/3K\rceil)}(\w), \,Z_{n} o\big),\\
&\big(o, \,\axes_{\check{j}(1)}(\check{\w}), \, \axes_{\check{j}(2)}(\check{\w}),  \, \ldots, \, \axes_{\check{j}(\lceil 2m/3K\rceil)}(\check{\w}), \,\check{Z}_{m} o\big);
\end{aligned}
\]
\item $\big(\bar{\axes}_{\check{j}(i)}(\check{\w}),\, \axes_{j(i)}(\w) \big)$ is $D_{0}$-aligned for some $i \le k/3K$.
\end{enumerate}
The first item is handled by Lemma \ref{lem:pivotingAlign1}: it holds for probability at least $1-2k \cdot (1/400)^{k/3K}$. 

Next, recall that for each $n \ge k$, there are at least $n/K$ pivotal times for $\mathcal{E}$ before $n$. Also, for each $m \ge 2k$, there are at least $m/K$ pivotal times for $\check{\mathcal{E}}$ before $m$. Hence, we can apply Lemma \ref{lem:pivotingAlign1} and deduce that the following are $D_{0}$-semi-aligned:\[\begin{aligned}
\big(\axes_{j(1)}(\w), \, \ldots, \, \axes_{j(\lceil 2n/3K\rceil)}(\w), \,Z_{n} o\big),\quad \big( \axes_{\check{j}(1)}(\check{\w}),\, \ldots,\,\axes_{\check{j}(\lceil 2m/3K\rceil )}(\check{\w}), \, \check{Z}_{m} o\big),
\end{aligned}
\]
for probability at least $1-(1/400)^{n/3K-1}$ and $1-(1/400)^{m/3K - 1}$, respectively. Taking intersection for $n \ge k$ and $m \ge 2k$, we observe that Item (ii) holds for probability at least $1-3 \cdot (1/400)^{k/3K - 1}$. 

Finally, Item (iii) is handled by Lemma \ref{lem:pivotingAlign2}: it holds for probability at least $1- (1/200)^{k/3K - 1}$. Combining these, we deduce \[
\Prob\big( B \, \big| \, \check{\mathcal{E}} \times \mathcal{E}\big) \ge 1 - (2k+4) \cdot (1/200)^{k/3K - 1} \ge 1 - 200 \cdot(2k+4) \cdot 0.01^{k/3K}.
\]

It remains to prove that $\Devi(\check{\w}, \w) \le k$ for $(\check{\w}, \w) \in B$. First, by  definition of $A$, $j(\lceil k/3K\rceil )$ is smaller than $k$ and \[
s_{\lceil k/3K\rceil} = (g_{j(\lceil k/3K\rceil )-M_{0}+1}, \ldots, g_{j(\lceil k/3K\rceil ) })
\] is Schottky. Next, for each $n \ge k$, $(o, \axes_{j(1)}(\w), \axes_{j(2)}(\w),   \ldots, \axes_{j(\lceil 2n/3K \rceil)}(\w), Z_{n} o)$ is $D_{0}$-semi-aligned. Hence, $(o, \axes_{j(\lceil k/3K\rceil)}(\w), Z_{n} o)$ is also $D_{0}$-semi-aligned.

We now investigate the alignment of $(\check{Z}_{m} o, \axes_{j(\lceil k/3K\rceil )}(\w))$. For $m \le 2k$, this is guaranteed by item (1). When $m \ge 2k$, we appeal to item (2) and (3). Namely, the sequence\[
\big( \check{Z}_{m} o, \bar{\axes}_{\check{j}(\lceil 2m3/\rceil)}(\check{\w}), \ldots, \bar{\axes}_{\check{j}(i+1)}(\check{\w}),  \bar{\axes}_{\check{j}(i)}(\check{\w}), \axes_{j(i)}(\w), \ldots, \axes_{j(\lceil k/3K \rceil)}(\w), \axes_{j(\lceil k/3K \rceil + 1)}, \ldots \big).
\]
is $D_{0}$-semi-aligned. In particular,  $\big(\check{Z}_{m} o, \bar{\axes}_{j(\lceil k/3K \rceil )} (\w) \big)$ is $D_{0}$-semi-aligned.
\end{proof} 

Here is a corollary of Lemma \ref{lem:Devi} that we will use in Section \ref{section:LDP}.

\begin{cor}[{\cite[Lemma 4.14]{gouezel2022exponential}}]\label{cor:gouezelRW1Cor}
Let $\mu$ be a non-elementary probability measure on $G$ and let $(Z_{n})_{n}$ be the random walk generated by $\mu$. Then for each $\epsilon > 0$, there exists $C>0$ such that  \[
\Prob \big(d(o, g Z_{n}o) \ge d(o, go) - C\,\,\textrm{for all $n \ge 0$}\big) \ge 1-\epsilon/2 \quad (\forall \, g \in G).
\]
\end{cor}

\begin{proof}
Let us pick $K_{0} > 0$ and a long enough and large $K_{0}$-Schottky set $S$ for $\mu$. Let $K'$ be the constant as in Lemma \ref{lem:Devi}. Given $\epsilon>0$, we take $N>1$ large enough so that $K' e^{-N / K'} \le \epsilon /4$. Then, the definition of the RV $\Devi(\check{\w}, \w)$ and Lemma \ref{lem:Devi} tells us that  \[
\Prob \left( \begin{array}{c} \textrm{there exists $i < N$ such that $\axes_{i}$ is a Schottky axis and}\\ \textrm{ $(g^{-1} o, \axes_{i}, Z_{n} o)$ is $D_{0}$-semi-aligned for each $n \ge N$}\end{array} \, \Big| \, \check{g}_{1} = g^{-1}  \right) \ge 1-\epsilon/4.
\]

When $(g^{-1} o, \axes_{i}, Z_{n} o)$ is $D_{0}$-semi-aligned, the second inequality in Lemma \ref{lem:SchottkyAlign}(ii) implies \[
d(g^{-1}o, Z_{n} o) \ge d(g^{-1} o, Z_{i - M_{0}} o) + 50E_{0} N \ge d(g^{-1} o, o) - d(Z_{i-M_{0}} o, o) \ge d(o, go) - \sum_{j=1}^{N} d(o, g_{j}o).
\]
This bound also holds for $n \le N$: \[
d(g^{-1} o, Z_{n} o) \ge d(g^{-1} o, o) - d(Z_{n} o, o) \ge d(o, go) - \sum_{j=1}^{N} d(o, g_{j} o).
\]
Given these, the proof ends by taking large enough $C>0$ such that \[
\Prob \Big( \sum_{j=1}^{N} d(o, g_{j} o) \ge C\Big) \le \epsilon/4. \qedhere
\]
\end{proof}

\begin{cor}\label{cor:SLLNInfinite}
Let $\mu$ be a non-elementary probability measure on $G$ whose expectation is infinite. Then $\mu^{\ast m}$ has infinite expectation for each $m>0$. In particular, the drift $\lambda(\mu) := \lim_{m \rightarrow \infty} \frac{1}{m} \E_{\mu^{\ast m}} [d(o, go)]$ is infinity.
\end{cor}

\begin{proof}
Let $\epsilon =0.2$ and let $C = C(\mu, \epsilon)$ be as in Corollary \ref{cor:gouezelRW1Cor}. Let $(g_{1}, \ldots, g_{m})$ be distributed according to $\mu^{m}$. Then by Corollary \ref{cor:gouezelRW1Cor}, we have \[
\E\big[ d(o, g_{1} g_{2} \cdots g_{m} o) \, \big| \, g_{1} = g \big] \ge \E\big[( d(o, go) - C) \cdot 1_{\{ d(o, g g_{2} \cdots g_{m} o) \ge d(o, g o) \}}\, \big| \, g_{1}= g\big] \ge 0.9 \cdot (d(o, go) -C).
\]
Now integrating over $g_{1} \in \supp \mu$ with law $\mu$, we get \[
\E\big[d(o, g_{1}g_{2} \cdots g_{m} o) \big] \ge 0.9\E_{\mu} [d(o, go) - C] = +\infty. \qedhere
\]
\end{proof}

Similarly, fixing the Schottky set $S$ for $\mu$, we similarly define $\check{\Devi} = \check{\Devi}(\check{\w}, \w)$ as the minimal index $k$ that is associated with another index $i \le k$ such that: \begin{enumerate}
\item $(\check{g}_{i}^{-1}, \ldots, \check{g}_{i-M_{0} + 1}^{-1})$ is a Schottky sequence;
\item $(\check{Z}_{m}o, \bar{\axes}_{i}(\check{\w}), o)$ is $D_{0}$-semi-aligned for all $m \ge k$, and 
\item $(\bar{\axes}_{i}(\check{\w}), Z_{n} o)$ is $D_{0}$-semi-aligned for all $n\ge 0$.
\end{enumerate}
Then we similarly have \begin{equation}\label{eqn:DeviDecays2} \begin{aligned}
\Prob\left(\check{\Devi}(\check{\w}, \w) \ge k \, \Big|\, \check{g}_{k+1}, g_{1}, \ldots, g_{k+1}\right) \le K' e^{- k/K'}.
\end{aligned}
\end{equation}

Thanks to these exponential bounds, we can establish the deviation inequality.

\begin{prop}\label{prop:deviation}
Let $p>0$ and let $((\check{Z}_{n})_{n}, (Z_{n})_{n})$ be the (bi-directional) random walk generated by a non-elementary probability measure $\mu$ on $G$ with finite $p$-th moment. Then the random variable $\sup_{n, m \ge 0} (\check{Z}_{m} o, Z_{n} o)_{o}$ has finite $2p$-th moment.
\end{prop}

Note the difference between this proposition and \cite[Proposition 5.6, 5.8]{choi2023central}; we are taking the global suprema, not the limit suprema.

\begin{proof}
Let $K'$ be the constant for $\mu$ as in Lemma \ref{lem:Devi} and let\[
D_{k} := \sum_{i=1}^{k} d(o, g_{i}o),  \quad \check{D}_{k} := \sum_{i=1}^{k} d(o, \check{g}_{i} o).
\] 
By triangle inequality, $d(o, Z_{k} o) < D_{l}$ and $d(o, \check{Z}_{k} o) \le \check{D}_{l}$ for all $k \le l$. We begin by claiming \begin{equation}\label{eqn:deviClaim}
\sup_{n, m \ge 0} (\check{Z}_{m} o, Z_{n} o)_{o}^{2p} \le \sum_{i=0}^{\infty} |\check{D}_{i+1}^{p} D_{i+1}^{p} - \check{D}_{i}^{p} D_{i}^{p}| \left(1_{\check{D}_{i} \ge D_{i}} 1_{i < \Devi} + 1_{\check{D}_{i} \le D_{i}} 1_{i < \check{\Devi}}\right) \quad \textrm{almost surely}.
\end{equation}
Since $\Prob(\max\{\Devi, \check{\Devi}\} \ge k)$ is summable by Inequality \ref{eqn:DeviDecays1} and \ref{eqn:DeviDecays2}, Borel-Cantelli implies that \[
l := \min \left\{ i : 1_{\check{D}_{i} \ge D_{i}} 1_{i < \Devi} + 1_{\check{D}_{i} \le D_{i}} 1_{i < \check{\Devi}}=0\right\} <+\infty \quad \textrm{almost surely}.
\]
Note that the RHS of Inequality \ref{eqn:deviClaim} is at least $ \check{D}_{l}^{p} D_{l}^{p}$.

Now at $i = l$, we have either $\check{D}_{l} \ge D_{l}$ or $\check{D}_{l} \le D_{l}$. In the first case $l \ge \Devi$ must hold. Then for $m \ge 0$ and $n \ge l$, we have \[\begin{aligned}
(\check{Z}_{m} o, Z_{n} o)_{o}^{2p} \le d(o, Z_{l} o)^{2p} \le D_{l}^{2p} \le\check{D}_{l}^{p}D_{l}^{p}
\end{aligned}
\]
by Lemma \ref{lem:DeviGromProd}. Moreover, for $m \ge 0$ and $n \le l$, we have \[
(\check{Z}_{m} o, Z_{n} o)_{o}^{2p} \le d(o, Z_{n} o)^{2p} \le D_{n}^{2p} \le D_{l}^{2p} \le \check{D}_{l}^{p}D_{l}^{p}.
\]
In the second case $l \ge \check{\Devi}$ must hold, and for a similar reason $(\check{Z}_{m} o, Z_{n} o)_{o}^{2p}$ is dominated by $\check{D}_{l}^{p} D_{l}^{p}$. Inequality \ref{eqn:deviClaim} now follows.

We now need a small observation:

\begin{fact}\label{fact:deviationRealIneq}
For $s_{1}, s_{2}, t_{1}, t_{2} \ge 0$, the following holds: \[\begin{aligned}
|t_{1}^{p} t_{2}^{p} - s_{1}^{p} s_{2}^{p}| &=| t_{1}^{p}(t_{2}^{p} - s_{2}^{p}) + (t_{1}^{p} - s_{1}^{p})s_{2}^{p}| \\
&\le 2^{2p} \left( |t_{1} - s_{1}|^{p} + s_{1}^{p-n_{p}} |t_{1} - s_{1}|^{n_{p}} +s_{1}^{p} \right) \cdot \left( |t_{2} - s_{2}|^{p} + s_{2}^{p-n_{p}} |t_{2} - s_{2}|^{n_{p}} \right) \\
& + 2^{p} \left( |t_{1} - s_{1}|^{p} + s_{1}^{p-n_{p}} |t_{1} - s_{1}|^{n_{p}}\right) s_{2}^{p}. \quad (n_{p} = p\,\, \textrm{if}\,\, 0 \le p \le 1, \,\, n_{p} = 1 \,\, \textrm{otherwise})
\end{aligned}
\]
\end{fact}

\begin{proof}[Proof of Fact \ref{fact:deviationRealIneq}]
The fact follows from the following inequality in \cite[Section 5.4]{benoist2016central}:\[
|t^{p} - s^{p} |\le  2^{p} \big( |t-s|^{p} + s^{p-n_{p}} |t-s|^{n_{p}} \big) \quad (n_{p} = p\,\, \textrm{if}\,\, 0 \le p \le 1, \,\, n_{p} = 1 \,\, \textrm{otherwise}).
\]
We give its proof for completeness. Assume $t \ge s$ without loss of generality. When $p \le 1$,  the concavity of $f(x) = x^{p}$ implies the inequality. When $p >1$, we divide the cases. If $s<t/2$, then\[
t^{p} - s^{p} < t^{p} < (2(t-s) )^{p} \le 2^{p} |t-s|^{p}.
\]
If $s \ge t/2$, then we have\[\begin{aligned}
t^{p} - s^{p} &= \int_{s}^{t} p x^{p-1} \, dx \le \int_{s}^{t} p \left(\frac{s}{t-s} (x-s) + s\right)^{p-1} \, dx & \left(\because \frac{s}{t-s} \ge 1\right) \\
&= (t-s) \cdot p s^{p-1} \int_{1}^{2} u^{p-1} \, du & \left(u = \frac{1}{t-s} (x-s) + 1\right) \\
&= (t-s) s^{p-1} (2^{p} - 1) \le 2^{p} s^{p-1} (t-s). \qedhere
\end{aligned}
\]
\end{proof}

By Fact \ref{fact:deviationRealIneq}, the expectations of $|\check{D}_{i+1}^{p} D_{i+1}^{p} - \check{D}_{i}^{p} D_{i}^{p}| \left(1_{\check{D}_{i} \ge D_{i}} 1_{i < \Devi} + 1_{\check{D}_{i} \le D_{i}} 1_{i < \check{\Devi}}\right)$ for $i \ge 0$ are summable as soon as there exists $K''>0$ such that \begin{equation}\label{eqn:deviationMiddle1}
\E\left[ d(o, \check{g}_{i+1})^{n_{1}} d(o, g_{i+1})^{n_{2}} \check{D}_{i}^{p-n_{1}} D_{i}^{p-n_{2}} \left(1_{\check{D}_{i} \ge D_{i} }1_{i<\Devi} + 1_{\check{D}_{i} \le D_{i}} 1_{i < \check{\Devi}}\right)\right] < K''i^{2p+2}e^{-i/K''}
\end{equation} for each $0 \le n_{1}, n_{2} \le p$ with $n_{1} + n_{2} \ge \min(p, 1)$. We discuss the case $n_{2} >0$; the other case $n_{1} >0$ can be handled in the same way.

We will take advantage of the fact that $\E[\check{D}_{i}^{p} D_{i}^{p}]$ is bounded. Namely, the expectation of $\check{D}_{i}^{p-n_{1}} D_{i}^{p-n_{2}}$ on the set $\{D_{i} > c\}$ is small for large $c$. Next, on the set $\{D_{i} \le c\}$, we will bound the expectation of $\check{D}_{i}^{p-n_{1}} D_{i}^{p-n_{2}} 1_{D_{i} < c}1_{i < \Devi}$ by using the exponential bound on $\Prob( i < \Devi)$ (that suppresses $D_{i}^{p-n_{2}} < c^{p-n_{2}}$) independent of the distribution of $\check{D}_{i}$. 

We first discuss the term $\E\left[ d(o, \check{g}_{i+1})^{n_{1}} d(o, g_{i+1})^{n_{2}} \check{D}_{i}^{p-n_{1}} D_{i}^{p-n_{2}}\cdot 1_{\check{D}_{i} \ge D_{i} }1_{i<\Devi} \right]$. 
Let us fix $\check{g}_{i+1}$ and $g_{i+1}$ for the moment, and let $c:= e^{i/2pK'}$. We then have a decomposition \begin{equation}\label{eqn:deviMiddle1stCase}\begin{aligned}
&\E\left.\left[ \check{D}_{i}^{p-n_{1}} D_{i}^{p-n_{2}} 1_{\check{D}_{i} \ge D_{i}} 1_{i<\Devi}\, \right| \, \check{g}_{i+1}, g_{i+1}\right] \\
&= \E\left.\left[ \check{D}_{i}^{p-n_{1}} D_{i}^{p-n_{2}}1_{D_{i} > c} 1_{\check{D}_{i} \ge D_{i}} 1_{i<\Devi} \, \right|\, \check{g}_{i+1}, g_{i+1}\right] + \E\left.\left[ \check{D}_{i}^{p-n_{1}} D_{i}^{p-n_{2}} 1_{D_{i} \le c} 1_{\check{D}_{i} \ge D_{i}} 1_{i<\Devi} \, \right|\, \check{g}_{i+1}, g_{i+1}\right].
\end{aligned}
\end{equation}
The first term is controlled as follows:\[
\begin{aligned}
& \E\left.\left[ \check{D}_{i}^{p-n_{1}} D_{i}^{p-n_{2}}1_{D_{i} > c} 1_{\check{D}_{i} \ge D_{i}} 1_{i<\Devi} \, \right|\, \check{g}_{i+1}, g_{i+1}\right] \\ &\le  \E\left.\left[ \check{D}_{i}^{p-n_{1}} D_{i}^{p-n_{2}}1_{D_{i} > c} \, \right|\, \check{g}_{i+1}, g_{i+1}\right] \\
&\le \E \left[ \check{D}_{i}^{p-n_{1}} D_{i}^{p} \cdot c^{-n_{2}} \right] \le \E[\check{D}_{i}^{p-n_{1}}] \cdot \E[D_{i}^{p}] \cdot c^{-n_{2}}& (\because D_{i}^{-n_{2}} \le c^{-n_{2}} ) \\
&\le i^{p-n_{1} + 1} \E_{\mu} [d(o, go)^{p-n_{1}}] \cdot i^{p+1} \E_{\mu}[d(o, go)^{p}] \cdot c^{-n_{2}}.
\end{aligned}
\]
In the final step, we used the following fact for each $r>0$ and $i > 0$:\begin{equation}\label{eqn:cuteExp}\begin{aligned}
 \E\bigg[ \bigg(\sum_{j=1}^{i} d(o, g_{j} o) \bigg)^{r}\bigg] &\le \E\Big[\Big(i \cdot \max_{1 \le j \le i} d(o, g_{j} o)\Big)^{r}\Big] \le \E \bigg[ i^{r}\cdot  \sum_{j=1}^{i} d(o, g_{j} o)^{r} \bigg] \le i^{r+1} \E_{\mu} [d(o, go)^{r}].
\end{aligned}
\end{equation}

Next, we apply Lemma \ref{lem:Devi} to the second term of the RHS of Equation \ref{eqn:deviMiddle1stCase} and observe:
\[
\begin{aligned}
\E\left.\left[ \check{D}_{i}^{p-n_{1}} D_{i}^{p-n_{2}} 1_{D_{i} \le c} 1_{\check{D}_{i} \ge D_{i}} 1_{i<\Devi} \, \right|\, \check{g}_{i+1}, g_{i+1}\right]&\le \E\Big[ \check{D}_{i}^{p-n_{1}} \cdot \E\Big[ c^{p-n_{2}} 1_{i<\Devi} \, \Big| \, \check{g}_{1}, \ldots, \check{g}_{i+1}, g_{i+1}\Big]\Big]\\
&\le \E\Big[\check{D}_{i}^{p-n_{1}} \cdot c^{p-n_{2}} \Prob\left[\Devi > i \, \big| \, \check{g}_{1}, \ldots, \check{g}_{i+1}, g_{i+1} \right]\Big] \\
&\le i^{p-n_{1} + 1} \E_{\mu} [d(o, go)^{p-n_{1}}] \cdot c^{p-n_{2}} \cdot K' e^{-i/K'}.
\end{aligned}
\]
Here, $c^{p-n_{2}}$ is dominated by $c^{p} = e^{i/2K'}$. Overall, we have \[
\E\left.\left[ \check{D}_{i}^{p-n_{1}} D_{i}^{p-n_{2}} 1_{\check{D}_{i} \ge D_{i}} 1_{i<\Devi} \right|  \check{g}_{i+1}, g_{i+1}\right] \le K' \E_{\mu}[d(o, go)^{p-n_{1}}] (1 + \E_{\mu} [d(o, go)^{p}]) \cdot i^{2p}  \max (e^{-i/2K'}, e^{-n_{2} i/2pK'}).
\]
We now multiply $ d(o, \check{g}_{i+1})^{n_{1}} d(o, g_{i+1})^{n_{2}} $ and integrate. As a result, we observe  \[
\begin{aligned}
&\E\left[ d(o, \check{g}_{i+1})^{n_{1}} d(o, g_{i+1})^{n_{2}} \check{D}_{i}^{p-n_{1}} D_{i}^{p-n_{2}}\cdot 1_{\check{D}_{i} \ge D_{i} }1_{i<\Devi} \right]\\
&= \E \Big[ d(o, g_{i+1} o)^{n_{1}} d(o, \check{g}_{i+1}o)^{n_{2}} \cdot \E\left.\left[ \check{D}_{i}^{p-n_{1}} D_{i}^{p-n_{2}} 1_{\check{D}_{i} \ge D_{i}} 1_{i<\Devi}\, \right| \, \check{g}_{i+1}, g_{i+1}\right]  \Big] \\
&\le \E \Big[ d(o, g_{i+1} o)^{n_{1}} d(o, \check{g}_{i+1}o)^{n_{2}} \cdot\E_{\mu}[d(o, go)^{p-n_{1}}] (1 + \E_{\mu} [d(o, go)^{p}]) \cdot i^{2p} K' e^{-\frac{n_{2}}{2(p+1)K'} i} \Big]\\
&\le C(\mu)\cdot  i^{2p} K' e^{- \frac{n_{2}}{2(p+1)K'}i}
\end{aligned}
\]
for some constant $C(\mu) < +\infty$ determined by the distribution of $\mu$, independent of $i$.
Note that $\mu$ has finite $q$-th moment for every $0 \le q \le p$ thanks to Jensen's inequality.

We similarly deal with the term $\E\left[ d(o, \check{g}_{i+1})^{n_{1}} d(o, g_{i+1})^{n_{2}} \check{D}_{i}^{p-n_{1}} D_{i}^{p-n_{2}}\cdot 1_{\check{D}_{i} \le D_{i} }1_{i<\check{\Devi}} \right]$. Fixing $g_{i+1}$ and $\check{g}_{i+1}$ first, we split the expectation based on the dichotomy for $\check{D}_{i}$:
\[\begin{aligned}
&\E\left.\left[ \check{D}_{i}^{p-n_{1}} D_{i}^{p-n_{2}} 1_{\check{D}_{i} \le D_{i}} 1_{i<\check{\Devi}}\, \right|\, \check{g}_{i+1}, g_{i+1}\right] \\
&= \E\left.\left[ \check{D}_{i}^{p-n_{1}} D_{i}^{p-n_{2}} 1_{\check{D}_{i} > c} 1_{\check{D}_{i} \le D_{i}} 1_{i<\check{\Devi}} \, \right|\, \check{g}_{i+1}, g_{i+1}\right] + \E\left.\left[ \check{D}_{i}^{p-n_{1}} D_{i}^{p-n_{2}} 1_{\check{D}_{i} \le c} 1_{\check{D}_{i} \le D_{i}} 1_{i<\check{\Devi}}\, \right|\, \check{g}_{i+1}, g_{i+1}\right].
\end{aligned}
\]
Here, a crucial observation is that $\check{D}_{i}^{p-n_{1}} D_{i}^{p-n_{2}} 1_{\check{D}_{i} > c} 1_{\check{D}_{i} \le D_{i}} 1_{i<\check{\Devi}}$ is dominated by $\check{D}_{i}^{p-n_{1}} D_{i}^{p-n_{2}} 1_{D_{i} > c} $. The remaining step is analogous to the previous computations:
\[
\begin{aligned}
&\E\left.\left[ \check{D}_{i}^{p-n_{1}} D_{i}^{p-n_{2}} 1_{\check{D}_{i} \le D_{i}} 1_{i<\check{\Devi}}\, \right|\, \check{g}_{i+1}, g_{i+1}\right] \\
&\le \E\left.\left[ \check{D}_{i}^{p-n_{1}} D_{i}^{p-n_{2}} 1_{D_{i} > c}\, \right|\, \check{g}_{i+1}, g_{i+1}\right] + \E\left.\left[ \check{D}_{i}^{p-n_{1}} D_{i}^{p-n_{2}} 1_{\check{D}_{i} \le c} 1_{i<\check{\Devi}}\, \right|\, \check{g}_{i+1}, g_{i+1}\right] \\
&\le \E\left.\left[ \check{D}_{i}^{p-n_{1}} D_{i}^{p} \cdot c^{-n_{2}} \, \right| \, \check{g}_{i+1}, g_{i+1} \right] + \E\left[ D_{i}^{p-n_{2}} \cdot \E\left.\left[ c^{p-n_{1}} 1_{i<\check{\Devi}} \, \right| \, \check{g}_{i+1}, g_{1}, \ldots, g_{i+1}\right]\right]\\
&\le \E[\check{D}_{i}^{p-n_{1}}] \cdot \E[D_{i}^{p}] \cdot c^{-n_{2}} + \E[D_{i}^{p - n_{2}}] \cdot c^{p-n_{1}} \Prob\left[\check{\Devi}> i \, \big| \, \check{g}_{i+1}, g_{1} \ldots, g_{i+1} \right] \\
&\le i^{p-n_{1}+1} \E_{\mu} [d(o, go)^{p-n_{1}}] \cdot i^{p+1} \E_{\mu}[d(o, go)^{p}] \cdot c^{-n_{2}} +i^{p-n_{2} + 1} \E_{\mu} [d(o, go)^{p-n_{2}}] \cdot c^{p-n_{1}} \cdot K'e^{-i/K'}.
\end{aligned}
\]
We then multiply $d(o, \check{g}_{i+1})^{n_{1}} d(o, g_{i+1})^{n_{2}}$ and integrate over $\check{g}_{i+1}$ and $g_{i+1}$ to obtain a summable bound. This concludes the Inequality \ref{eqn:deviationMiddle1}.
\end{proof}

The previous proof also yields the following corollary.

\begin{cor}\label{cor:minDevi}
Let $p>0$ and let $\big((\check{Z}_{n})_{n>0}, (Z_{n})_{n>0}\big)$ be the (bi-directional)  random walk generated by a non-elementary probability measure $\mu$  on $G$ with finite $p$-th moment.  Then there exists $K>0$ such that \[
\E\left[ \min \{ d(o, Z_{\Devi} o), d(o, \check{Z}_{\check{\Devi}} o)\}^{2p} \right] < K.
\]
\end{cor}

\begin{proof}
In view of the  previous proof, it suffices to check \[
\min \{ d(o, Z_{\Devi} o), d(o, \check{Z}_{\check{\Devi}} o)\}^{2p} \le \sum_{i=0}^{\infty} |\check{D}_{i+1}^{p} D_{i+1}^{p} - \check{D}_{i}^{p} D_{i}^{p}| \left(1_{\check{D}_{i} \ge D_{i}} 1_{i < \Devi} + 1_{\check{D}_{i} \le D_{i}} 1_{i < \check{\Devi}}\right). 
\]
The RHS is at least $\check{D}_{l}^{p} D_{l}^{p}$ for $l = \min \{ i : 1_{\check{D}_{i} \ge D_{i}} 1_{i < \Devi} + 1_{\check{D}_{i} \le D_{i}} 1_{i < \check{\Devi}}=0\}$. Note that either $\check{D}_{l} \ge D_{l}$ or $\check{D}_{l} \le D_{l}$ holds. In the first case, we are forced to have $l \ge \Devi$; then \[\begin{aligned}
\min \{ d(o, Z_{\Devi} o), d(o, \check{Z}_{\check{\Devi}} o)\}^{2p} \le d(o, Z_{\Devi} o)^{2p}\le D_{\Devi}^{2p} \le D_{l}^{2p} \le \check{D}_{l}^{p}D_{l}^{p}.
\end{aligned}
\]
In the second case, we are forced to have $l \ge \check{\Devi}$; then \[\begin{aligned}
\min \{ d(o, Z_{\Devi} o), d(o, \check{Z}_{\check{\Devi}} o)\}^{2p} \le d(o, \check{Z}_{\check{\Devi}} o)^{2p} \le \check{D}_{\check{\Devi}}^{2p} \le \check{D}_{l}^{2p} \le \check{D}_{l}^{p}D_{l}^{p}.\qedhere
\end{aligned}
\]
\end{proof}

We now discuss random walks with finite exponential moment.

\begin{cor}\label{cor:minDeviExp}
Let $\big((\check{Z}_{n})_{n>0}, (Z_{n})_{n>0}\big)$ be the (bi-directional)  random walk generated by a non-elementary probability measure $\mu$  on $G$ with finite exponential moment.  Then there exists $K>0$ such that \[
\E\left[ \operatorname{exp} \left( d(o, Z_{\Devi} o) /K\right)\right] < K.
\]
\end{cor}

\begin{proof}
Let $K'$ be as in Lemma \ref{lem:Devi} and $D_{i} = \sum_{k=1}^{i} d(o, g_{k} o)$. Then $e^{d(o, Z_{\Devi} o)/K}$ is dominated by $\sum_{i \le \Devi} e^{D_{i}/K}$. Hence, we need to show that $\E[e^{D_{i}/K} 1_{i < \Devi}]$ is summable. Let $K, c > 0$ and observe
\[\begin{aligned}
\E\left[ e^{ D_{i} / K} 1_{i < \Devi}\right] &\le \E[ e^{D_{i}/K} 1_{D_{i} < c} 1_{i < \Devi}] + \E[ e^{D_{i}/K} 1_{D_{i} \ge c} 1_{i < \Devi}]\\
& \le \E[ e^{c/K} 1_{i< \Devi}] + \E[ e^{2 D_{i} / K} e^{-c/K}] \\
&\le e^{c/K} K' e^{-i/K'} + e^{-c/K} \cdot \left( \E_{\mu}\left[\operatorname{exp} \left(2d(o, go)/K\right)\right] \right)^{i}.
\end{aligned}
\]
By taking $K$ large enough, we can make $\E_{\mu}[ \operatorname{exp}(2d(o, go)/K)] \le e^{1/4K'}$. Then we take $c = iK/2K'$ and conclude $\E[e^{D_{i}/K} 1_{i< \Devi}] < (K' + 1) e^{-i/4K'}$.
\end{proof}

\subsection{Limit theorems} \label{subsection:limitThm}

The second-moment deviation inequality implies the following CLT:

\begin{thm}\label{thm:CLTStrong}
Let $(X, G, o)$ be as in Convention \ref{conv:strong} and let $(Z_{n})_{n>0}$ be the random walk generated by a non-elementary probability measure $\mu$ on $G$ with finite second moment. Then the following limit (called the \emph{asymptotic variance of $\mu$}) exists: \[
\sigma^{2} (\mu) := \lim_{n \rightarrow \infty} \frac{1}{n} Var[d(o, Z_{n} o)],
\]
and the random variable $\frac{1}{\sqrt{n}} [d(o, Z_{n}o) - \lambda(\mu) n]$ converges in law to the Gaussian law $\mathcal{N}(0, \sigma(\mu))$ with zero mean and variance $\sigma^{2}(\mu)$.
\end{thm}

\begin{proof}
Since $\mu$ has finite second moment, Proposition \ref{prop:deviation} implies that $\sup_{n, m \ge 0} (\check{Z}_{m} o, Z_{n} o)_{o}$ has finite 4-th moment, and hence finite second moment. Now Theorem 4.1 and 4.2 of \cite{mathieu2020deviation} lead to the conclusion.
\end{proof}

\begin{remark}\label{rem:asymVar}
In fact, the following non-degeneracy statement holds:
\begin{fact}\label{fact:nonArith}
Let $(X, G, o)$ be as in Convention \ref{conv:strong} and let $(Z_{n})_{n}$ be the random walk generated by a non-elementary probability measure $\mu$ on $G$. Then the asymptotic variance $\sigma^{2}(\mu) := \lim_{n}\frac{1}{n} Var[d(o, Z_{n} o)] $ is nonzero if and only if $\mu$ is \emph{non-arithmetic}, i.e., there exists $N>0$ and two elements $g, h \in (\supp \mu^{\ast N})$ of $\supp \mu^{\ast N}$ with distinct translation lengths.
\end{fact}
The strict positivity of $\sigma^{2}(\mu)$ for non-arithmetic random walks on Gromov hyperbolic spaces and Teichm{\"u}ller space was discussed in \cite{choi2023central}; see Theorem B and Claim 6.2 of \cite{choi2023central}. Since the argument in \cite{choi2023central} also applies to the general case, we omit the proof here.
\end{remark}

We next discuss the law of the iterated logarithms.

\begin{thm}\label{thm:LILStrong}
Let $(X, G, o)$ be as in Convention \ref{conv:strong} and let $(Z_{n})_{n>0}$ be the random walk generated by a non-elementary probability measure $\mu$ on $G$ with finite second moment. Then for almost every sample path $(Z_{n})_{n}$ we have \[
\limsup_{n \rightarrow \infty} \frac{d(o, Z_{n} o) - \lambda(\mu) n}{\sqrt{2n \log \log n}} = \sigma(\mu),
\] 
where $\lambda(\mu)$ is the drift of $\mu$ and $\sigma^{2}(\mu)$ is the asymptotic variance of $\mu$.
\end{thm}
We proved the LIL based on the uniform $4$th order deviation inequality in \cite{choi2023central}. We give another argument because we will only have second-order deviation inequality in Part \ref{part:weak}.

\begin{proof}
In the proof of the LIL in \cite{choi2023central} (see \cite[Claim 7.1]{choi2023central}), the author proved:
\begin{lem}\label{lem:deAcosta}
Let $K>0$ and let $\{U_{k, i}\}_{i, k \in \Z_{>0}}$ be RVs such that for each $k$, $\{U_{k, i}\}_{i}$ are i.i.d.s with zero mean and variance at most $K$. Then for each $\epsilon >0$, there exists $M>0$ such that \[
\Prob \left( \limsup_{n} \frac{1}{\sqrt{2n \log \log n}} \Bigg| \sum_{k = M}^{\lfloor \log_{2} n \rfloor} \sum_{i=1}^{\lfloor n/2^{k+1} \rfloor} U_{k, i}  \Bigg| > \epsilon \right) \le \epsilon.
\]
\end{lem}
We now set \[\begin{aligned}
Y_{k, i} := d(Z_{2^{k} (i-1)} o, Z_{2^{k} i} o), \quad b_{k, i} := (Z_{2^{k} (2i-2)} o, Z_{2^{k} \cdot 2i}o)_{Z_{2^{k} (2i-1)}o}.
\end{aligned}
\]
Equivalently, we have $Y_{k+1, i} = Y_{k, 2i-1} + Y_{k, 2i} - 2b_{k, i}$. Note that $\{b_{k, i}\}_{k, i}$ have uniformly bounded variance by Lemma \ref{lem:Devi} and $\{b_{k, i} - \E[b_{k, i}]\}_{i}$ are i.i.d.s with zero mean for each $k$. We also set \[
b_{k; n} = \left\{ \begin{array}{cc} (Z_{2^{k+1} \lfloor n/2^{k+1} \rfloor} o, Z_{n} o)_{Z_{2^{k}(2 \lfloor n/2^{k+1} \rfloor +1)} o}  & \textrm{if $2^{k}(2 \lfloor n/2^{k+1} \rfloor +1)<n$} \\ 
0 & \textrm{otherwise}. \end{array} \right.
\]
We then observe the decomposition
 \begin{equation}\label{eqn:LILInt1}
d(o, Z_{n} o) =\sum_{i=1}^{\lfloor n/2^{M} \rfloor} Y_{M, i} + d\big( Z_{2^{M} \lfloor n/2^{M} \rfloor} o, Z_{n} o \big) - 2\sum_{k = M}^{\lfloor \log_{2} n \rfloor}\bigg( b_{k; n} +  \sum_{i=1}^{\lfloor n/2^{k+1} \rfloor} b_{k, i} \bigg). \quad( \forall n, M > 0)
\end{equation}
Indeed, the RHS is unchanged when $M$ increases by 1 and is equal to $d(o, Z_{n}o)$ at $M > \lfloor \log_{2} n  \rfloor $.

Now, fixing an $\epsilon > 0$, we take $M>0$ for $\{b_{k, i} - \E[b_{k, i}]\}_{i, k}$ using Lemma \ref{lem:deAcosta}. We balance each term in Display \ref{eqn:LILInt1} by subtracting its expectation, normalize with the denominator $\sqrt{2n \log \log n}$ and then examine the almost sure limit supremum. The classical LIL tells us that \[
\limsup_{n\rightarrow \infty} \frac{1}{\sqrt{2n \log \log n}} \sum_{i=1}^{ \lfloor n/2^{M} \rfloor} (Y_{M, i} - \E[Y_{M, i}]) = \frac{1}{\sqrt{2^{M}}} \sqrt{Var (Y_{M, 1})}.
\]

Regarding the second term, note that $d(Z_{2^{M} \lfloor n/2^{M} \rfloor} , Z_{n} o)$ is dominated by the sum of at most $2^{M}$ independent steps distributed according to $\mu$. This implies that \[
\Prob\big( d(Z_{2^{M} \lfloor n/2^{M} \rfloor} , Z_{n} o) > \epsilon \sqrt{n}\big) \le \Prob\bigg( \sum_{i=1}^{2^{M}} d(o, g_{i} o) > \epsilon \sqrt{n}\bigg),
\]
and RHS is summable in $n$ because $\mu$ has finite second moment. By Borel-Cantelli lemma, \[
\frac{1}{\sqrt{2n \log \log n}}  \big| d( Z_{2^{M} \lfloor n/2^{M} \rfloor} o, Z_{n} o) \big| = 0 \quad \textrm{almost surely.}
\]
Next, 
Lemma \ref{lem:deAcosta} implies that the term \[
\frac{1}{\sqrt{2n \log \log n}}\sum_{k=M}^{\lfloor\log_{2} n\rfloor} \sum_{i=1}^{\lfloor n/2^{k+1} \rfloor} (b_{k, i} - \E[b_{k, i}])
\] eventually falls into the interval $[-\epsilon, +\epsilon]$ outside a set of probability $\epsilon$.

It remains to deal with $\frac{1}{\sqrt{2n \log\log n}}\sum_{k=M}^{\lfloor \log_{2}n \rfloor} (b_{k; n} - \E[b_{k; n}])$. Let \[
b_{j} := \sup_{i, i' \ge 0} (Z_{j- i}o , Z_{j + i'} o)_{o}.
\]Then for each $k$ and $n$, $0 \le b_{k; n} \le b_{2^{k} (2 \lfloor n/ 2^{k+1} \rfloor + 1)}$ holds. Moreover, $b_{j}$'s are identically distributed with finite variance (and hence finite expectation). This implies that \[
0\le \frac{1}{\sqrt{2n \log \log n}} \sum_{k=1}^{\lfloor \log_{2} n \rfloor}\E[b_{k;n}]\le \frac{\log_{2} n}{\sqrt{2n \log \log n}} \E[b_{0}]
\]tends to 0 as $n$ goes to infinity.

We now estimate the summation \[
\sum_{k=0}^{\infty} \sum_{i=1}^{\infty} \Prob\left( b_{2^{k} (2i-1)}^{2} \ge \epsilon^{2}  \sqrt{2}^{k} (2i-2)  \right).
\]
To estimate this, for each $y>0$ let us count the number of pairs $(i, k) \in \Z_{\ge 0}^{2}$ such that $ \epsilon^{2}  \sqrt{2}^{k} (2i-2) < y$. For each $k \in \Z_{\ge 0}$, there exist at most $y / (\sqrt{2}^{k} \epsilon^{2})$ candidates for $i$. Summing them up, there are at most $C_{\epsilon} y$ such pairs $(i, k)$, where $C_{\epsilon}>0$ is a constant. This implies that \[\begin{aligned}
\sum_{k=0}^{\infty} \sum_{i=1}^{\infty} \Prob\left( b_{2^{k} (2i-2)}^{2} \ge \epsilon^{2}  \sqrt{2}^{k} (2i-2)  \right) &\le \sum_{k, i} \sum_{y=0}^{\infty}  \Prob\big(y-1 \le b_{2^{k}(2i-2)}^{2} < y\big) 1_{y \ge \epsilon^{2} \sqrt{2}^{k} (2i-2)}\\
&= \sum_{k, i} \sum_{y=0}^{\infty}  \Prob\big(y-1 \le b_{1}^{2} < y\big) 1_{y \ge \epsilon^{2} \sqrt{2}^{k} (2i-2)} \\
&\le \sum_{y=0}^{\infty}  \Prob\big(y-1 \le b_{1}^{2} < y\big) \cdot  \#\{(i, k) : y \ge \epsilon^{2} \sqrt{2}^{k} (2i-2)\} \\
&\le \sum_{y=0}^{\infty} \Prob\big(y-1 \le b_{1}^{2} < y\big) \cdot C_{\epsilon} y \le   \E[C_{\epsilon} b_{1}^{2}] < +\infty.
\end{aligned}
\]
By Borell-Cantelli, for almost every sample path $b_{2^{k}(2i-1)} < \epsilon \cdot 2^{k/4} \sqrt{2i-2}$ holds for all but finitely many $(i,k)$'s. In particular, for sufficiently large $n$, we have \[
b_{k; n} \le b_{2^{k} (2\lfloor n/2^{k+1} \rfloor + 1)} \le \epsilon \cdot 2^{k/4}\sqrt{2n/2^{k+1}} =\epsilon \sqrt{n} /2^{k/4} 
\]
for each $k = 1, \ldots, \lfloor \log_{2} n \rfloor$. Hence, we have \[
\frac{1}{\sqrt{2n \log \log n}}\sum_{k = M}^{\lfloor \log_{2} n \rfloor} b_{k; n} \le \epsilon \sum_{k=1}^{\infty} 1/2^{k/4} \le 10 \epsilon.
\]
Combining these estimates with Equation \ref{eqn:LILInt1}, we observe that for probability at least $1-\epsilon$, \[
\limsup_{n \rightarrow \infty} \frac{d(o, Z_{n} o) - \E[d(o, Z_{n} o)]}{\sqrt{2n \log \log n}} \in \left[\sqrt{\frac{Var[ d(o, Z_{2^{M}}o)]}{2^{M}}} - 20\epsilon,\sqrt{\frac{Var [d(o, Z_{2^{M}} o)]}{2^{M}}}+20\epsilon\right].
\]
By decreasing $\epsilon$ while increasing $M$, we arrive at the desired conclusion.
\end{proof}

We finally prove the geodesic tracking by random walks.

\begin{thm}\label{thm:geodTracking}
Let $(X, G, o)$ be as in Convention \ref{conv:strong}, let $p>0$ and let $(Z_{n})_{n}$ be the random walk generated by a non-elementary probability measure $\mu$ on $G$ with finite $p$-th moment. Then there exists $K>0$ such that, for almost every sample path $(Z_{n})_{n \ge 0}$, there exists a $K$-quasigeodesic $\gamma$ on $X$ satisfying \[
\lim_{n \rightarrow \infty} \frac{1}{n^{1/2p}} d(Z_{n} o, \gamma) = 0.
\]
\end{thm}

\begin{proof}
Recall Definition \ref{dfn:SchottkyLong}: Given $K_{0}>0$, we have defined:\begin{itemize}
\item $D_{0}= D(K_{0}, K_{0}) > K_{0}$ be as in Lemma \ref{lem:1segment}, 
\item $E_{0}= E(K_{0}, D_{0}) > D_{0}$, $L_{0} = L(K_{0}, D_{0})$ be as in Proposition \ref{prop:BGIPWitness}.
\end{itemize}
In addition to these, we define:\begin{itemize}
\item $E_{1}= E(K_{0}, D_{0})$, $L_{1} = L(K_{0}, D_{0})$ be as in Lemma \ref{lem:BGIPQuasi};
\item $E_{2}= E(K_{0}, E_{0} + 5K_{0})$, $L_{2} = L(K_{0}, E_{0}+5K_{0})$ be as in Proposition \ref{prop:BGIPWitness}.
\end{itemize}
Since $\mu$ is non-elementary, Proposition \ref{prop:Schottky} guarantees that there exist $K_{0}>0$, $M_{0}>L_{0} + L_{1}+L_{2}$ and a large enough $K_{0}$-Schottky set $S \subseteq (\supp \mu)^{M_{0}}$. We fix this $S$ from now on.

By Proposition \ref{prop:gouezelRW1}, there exists a probability space $(\Omega, \Prob)$ with RV $\diffPivot(\w) = \{j(1) < j(2) < \ldots \} \subseteq M_{0} \Z_{>0}$, the set of pivotal times, such that $(o, \axes_{j(1)}(\w), \axes_{j(2)}(\w), \ldots)$ is $D_{0}$-semi-aligned.

We now define $\Gamma(\w)$ as the concatenation of $[o, Z_{j(1) - M_{0}} o]$, $[Z_{j(1) - M_{0}} o, Z_{j(1)} o]$, $[Z_{j(1)} o, Z_{j(2) - M_{0}} o]$, $[Z_{j(2) - M_{0}} o, Z_{j(2)} o]$, $\ldots$. By Lemma \ref{lem:BGIPQuasi}, $\Gamma(\w)$ is an $E_{1}$-quasigeodesic for almost every $\w \in \Omega$. It remains to prove $\lim_{n} d(Z_{n}(\w) o, \Gamma(\w)) / n^{1/2p}=0$ almost everywhere.

By Corollary \ref{cor:minDevi}, $\min [ d(o, Z_{\Devi} o), d(o, \check{Z}_{\check{\Devi}} o)]^{2p}$ is dominated by an integrable RV. This implies  \begin{equation}\label{eqn:summableSum}
\sum_{k} \Prob\left(\min\big(d(o, Z_{\Devi} o), d(o, \check{Z}_{\check{\Devi}}o)\big)> g(k)\right) < +\infty
\end{equation}
for some function $g$ such that $\lim_{k} g(k)/k^{1/2p} = 0$. Also, Lemma \ref{lem:Devi} tells us that \begin{equation}\label{eqn:summableDeviEst}
\sum_{k} \Prob \big( \max(\Devi, \check{\Devi}) \ge k - M_{0} \big) < +\infty.
\end{equation}
Now, for each $k \in \Z_{>0}$, we consider the following sets: \[
A_{k} := \left\{ (\check{\w}, \w) : \begin{array}{c}\textrm{there exists $M_{0}\le i \le k-M_{0}$ such that $d(o, Z_{i} o) \le g(k)$ and} \\  \textrm{$(\check{Z}_{k} o, (Z_{i-M_{0}} o, \ldots, Z_{i} o), Z_{n} o)$ is $D_{0}$-semi-aligned for all $n \ge k$ }\end{array}\right\}.
\]
 \[
B_{k} := \left\{ (\check{\w}, \w) : \begin{array}{c}\textrm{there exists $M_{0}\le i \le k-M_{0}$ such that $d(o, \check{Z}_{i} o) \le g(k)$ and} \\  \textrm{$(\check{Z}_{k} o, (\check{Z}_{i} o, \ldots, \check{Z}_{i-M_{0}} o), Z_{n} o)$ is $D_{0}$-semi-aligned for all $n \ge k$ }\end{array}\right\}.
\]
Then the definition of the RV $\Devi(\check{\w}, \w)$ and $\check{\Devi}(\check{\w}, w)$, together with Inequality \ref{eqn:deviDomDist}, tells us that \[
 A_{k}^{c} \cap B_{k}^{c} \subseteq \Big\{ (\check{\w}, \w) : \min \big(d(o, Z_{\Devi} o),d(o, \check{Z}_{\check{\Devi}}o)\big)> g(k) \,\,\textrm{or} \,\,\max(\Devi, \check{\Devi}) \ge k - M_{0}\Big\} .
\]
Thanks to Display \ref{eqn:summableSum} and \ref{eqn:summableDeviEst}, we observe that $\Prob(A_{k}^{c} \cap B_{k}^{c})$ is also summable.

Finally, consider \[
C_{k} := \left\{ (\check{\w}, \w) : \begin{array}{c}\textrm{there exists $M_{0}\le i \le 2k-M_{0}$ such that $d(Z_{k}, Z_{i} o) \le g(k)$ and} \\  \textrm{$(o, (Z_{i - M_{0}} o, \ldots, Z_{i} o), Z_{n} o)$ is $D_{0}$-semi-aligned for all $n \ge 2k$ }\end{array}\right\}.
\]
Then $C_{k}$ contains $T^{k}(A_{k} \cup B_{k})$, where $T$ is the Bernoulli shift operator on the bi-infinite sample paths, which is measure preserving. Hence,  $\Prob(C_{k}^{c}) \le \Prob(A_{k}^{c} \cap B_{k}^{c})$ is summable. The Borel-Cantelli lemma implies that, for almost every sample path,  for each sufficiently large $k$ there exists $M_{0} \le j'(k) \le 2k$ such that $\diam(Z_{k} o \cup \axes_{j'(k)} o) \le d(Z_{k} o, Z_{j'(k)} o) + \diam (\axes_{j'(k)} o) \le  g(k) + K_{0}M_{0} + K_{0}$ and such that  $(o, \axes_{j'(k)}, Z_{n} o)$ is $D_{0}$-semi-aligned for $n \ge 2k$ ($\ast$),

Let us now pick a sample path satisfying $(\ast$), pick a sufficiently large $k$, and let $N$ be an index such that $j(N) \ge 2k$. Recall that $(o, \axes_{j(1)}(\w), \axes_{j(2)}(\w), \ldots)$ is $D_{0}$-semi-aligned. By Proposition \ref{prop:BGIPWitness}, $[o, Z_{j(N)} o]$ have subsegments $[x_{1}, y_{1}], \ldots, [x_{N}, y_{N}]$, in order from left to right, such that $[x_{i}, y_{i}]$ and $\axes_{j(i)}$ are $0.1E_{0}$-fellow traveling for $i = 1, \ldots, N$. Moreover, by Corollary \ref{cor:BGIPSelfNbd}, $\axes_{j(i)}$ and $[Z_{j(i) - M_{0}} o, Z_{j(i)} o]$ are $0.1E_{0}$-fellow traveling for $i=1, \ldots, N$. Finally, since $(o, \axes_{j'(k)}, Z_{j(N)} o)$ is $D_{0}$-semi-aligned, Proposition \ref{prop:BGIPWitness} tells us that $[o, Z_{j(N)} o]$ also contains a subsegment $[q_{1}, q_{2}]$ that $0.1E_{0}$-fellow travels with $\axes_{j'(k)}$. For convenience we let $y_{0} = o$ and $j(0) = 0$.

If $[q_{1}, q_{2}]$ overlaps with some $[x_{i}, y_{i}]$, this implies $d(\axes_{j'(k)}, [Z_{j(i) - M_{0}} o, Z_{j(i)} o]) \le E_{0}$ and hence $d(Z_{k} o, \Gamma(\w) )\le g(k) + E_{0} + K_{0} M_{0} + K_{0}$. If not, then $[q_{1}, q_{2}]$ is a subsegment of $[y_{i-1}, x_{i}]$ for some $i$. Lemma \ref{lem:fellowTravelAlign} then tells us that $(y_{i-1}, \axes_{j'(k)}, x_{i})$ is $0.4E_{0}$-aligned. Since $d(y_{i-1}, Z_{j(i-1)} o) \le 0.1E_{0}$ and $d(x_{i}, Z_{j(i) - M_{0}} o) \le 0.1E_{0}$, Lemma \ref{lem:projBdd} implies that $(Z_{j(i-1)} o, \axes_{j'(k)}, Z_{j(i)- M_{0}} o)$ is $(0.5E_{0} + 4K_{0})$-aligned. By Proposition \ref{prop:BGIPWitness}, $[Z_{j(i-1)} o, Z_{j(i) - M_{0}} o]$ passes through the $E_{2}$-neighborhood of $\axes_{j'(k)}$, and $d(Z_{k} o, \Gamma(\w)) \le g(k) + E_{2}$.

In summary, almost every sample path $(\check{\w}, \w)$ satisfies $(\ast)$, which leads to $d(Z_{k} o, \Gamma(\w)) \le g(k)+E_{0}+E_{2} + K_{0}M_{0} + K_{0}= o(k^{1/2p})$ eventually. This ends the proof.\end{proof}

Recall Corollary \ref{cor:minDeviExp}: if $\mu$ has finite exponential moment, then $\E[\operatorname{exp}(d(o, Z_{\Devi} o) / K)]$ is finite, i.e., $\Prob( d(o, Z_{\Devi} o) > K \log k)$ is summable for some $K>0$. By replacing $g(k)$ in the previous proof with $K \log k$, we obtain:

\begin{thm}\label{thm:geodTrackingExp}
Let $(X, G, o)$ be as in Convention \ref{conv:strong} and let $(Z_{n})_{n}$ be the random walk generated by a non-elementary probability measure $\mu$ on $G$ with finite exponential moment. Then there exists $K>0$ such that, for almost every sample path $(Z_{n})_{n \ge 0}$, there exists a $K$-quasigeodesic $\gamma$ satisfying \[
\limsup_{n \rightarrow \infty} \frac{1}{\log n} d(Z_{n} o, \gamma) \le K.
\]
\end{thm}

\section{Pivotal time construction} \label{section:pivotConst}

In this section we prove Proposition \ref{prop:gouezelRW1} by generalizing Gou{\"e}zel's theory in \cite[Section 4A]{gouezel2022exponential} to the setting of Convention \ref{conv:strong}. We first construct and study pivotal times in a discrete model and then realize them on random walks. This strategy is also employed for LDP in Section \ref{section:LDP}.

\subsection{Pivotal times: discrete model}\label{subsection:pivotDiscrete}
Throughout the subsection, we fix a long enough $K_{0}$-Schottky set $S$ with cardinality $N_{0}$. Given sequences of isometries $\mathbf{w} =(w_{i})_{i=0}^{\infty}$ and $\mathbf{v} =(v_{i})_{i=1}^{\infty}$ in $G$,  we draw a sequence of Schottky sequences \[\begin{aligned}
\mathbf{s} &= (\alpha_{1}, \beta_{1}, \gamma_{1}, \delta_{1}, \ldots, \alpha_{n}, \beta_{n}, \gamma_{n}, \delta_{n}) \in S^{4n},
\end{aligned}
\] 
with respect to the uniform measure on $S^{4n}$. We define isometries \begin{equation}\label{eqn:abcd}
a_{i} := \prodSeq(\alpha_{i}), \,\,b_{i} :=\prodSeq(\beta_{i}), \,\,c_{i} := \prodSeq(\gamma_{i}), \,\,d_{i} := \prodSeq(\delta_{i}),
\end{equation}
and study the word \[
w_{0} a_{1}b_{1}v_{1}c_{1}d_{1}w_{1} \cdots a_{k}b_{k}v_{k}c_{k}d_{k}w_{k} \cdots.
\]
With the base case $w_{0, 2}^{+} := id$, we define its subwords for $i>0$:\[\begin{array}{lll}
w_{i, 2}^{-} := w_{i-1, 2}^{+} w_{i-1}, & w_{i, 1}^{-} := w_{i, 2}^{-} a_{i}, & w_{i, 0}^{-} := w_{i, 2}^{-} a_{i}b_{i}, \\[5pt]
 w_{i, 0}^{+} := w_{i, 2}^{-} a_{i} b_{i} v_{i}, &  w_{i, 1}^{+} := w_{i, 2}^{-} a_{i} b_{i} v_{i} c_{i}, & w_{i, 2}^{+} := w_{i, 2}^{-} a_{i}b_{i}v_{i}c_{i}d_{i}.
\end{array}
\]
Let us also employ the notations \[\begin{aligned}
\varGam(\alpha_{i}) &:= w_{i, 2}^{-} \Gamma^{+}(\alpha_{i}), & \varGam(\beta_{i}) &:= w_{i, 1}^{-} \Gamma^{+}(\beta_{i}),\\
\varGam(\gamma_{i}) &:= w_{i, 0}^{+} \Gamma^{+}(\gamma_{i}), & \varGam(\delta_{i}) &:= w_{i, 1}^{+} \Gamma^{+}(\delta_{i}).\\
\end{aligned}
\]

We define the \emph{set of pivotal times} $P_{n} = P_{n}\left(\mathbf{s}; \mathbf{w}, \mathbf{v}\right)$ and an auxiliary moving point $z_{n} = z_{n}\left(\mathbf{s}; \mathbf{w}, \mathbf{v}\right)$ inductively. Let $P_{0} = \emptyset$ and $z_{0} = o$ as the base case. Given $P_{n-1} \subseteq \{1, \ldots, n-1\}$ and $z_{n-1} \in X$, the data $P_{n}$ and $z_{n}$ at step $n$ are determined by the following criteria. 
\begin{enumerate}[label=(\Alph*)]
\item When $\left(z_{n-1}, \varGam(\alpha_{n})\right)$, $\left(\varGam(\beta_{n}), w_{n, 1}^{+} o\right)$, $\left(w_{n, 0}^{-}o, \varGam(\gamma_{n})\right)$ and $\left(\varGam(\delta_{n}), w_{n+1, 2}^{-}o\right)$ are $K_{0}$-aligned, we set $P_{n} = P_{n-1} \cup \{n\}$ and $z_{n} = w_{n, 1}^{+}o$ (see Figure \ref{fig:0thCasePivot}).
\item Otherwise, we seek $i \in P_{n-1}$ and an integer $j \in \{i+1, \ldots, n-1\}$ such that $\big(\varGam(\delta_{i}), \varGam(\beta_{j})\big)$ is $D_{0}$-semi-aligned and such that  $\big(\varGam(\beta_{j}), w_{n+1, 2}^{-} o\big)$ is $K_{0}$-aligned. 

If such a pair $(i, j)$ exists, we pick the lexicographically maximal one and let $P_{n} := P_{n-1} \cap \{1, \ldots, i\}$ and $z_{n} = w_{j, 1}^{-}o$. If such a pair does not exist, then we let $P_{n} := \emptyset$ and $z_{n} := o$.
\end{enumerate}

We note that the set the set $P_{n}$ depends solely on $(w_{i})_{i=0}^{n}$, $(v_{i})_{i=1}^{n}$ and $(\alpha_{i}, \beta_{i}, \gamma_{i}, \delta_{i})_{i=1}^{n}$; it is independent from $\{w_{i}, v_{i}, \alpha_{i}, \beta_{i}, \gamma_{i}, \delta_{i} : i > n\}$.

$P_{n}$ records the Schottky axes aligned along $[o, w_{n+1, 2}^{-} o]$. More precisely:

\begin{prop}\label{prop:extremal}
Let $P_{n}= \{i(1) < \ldots < i(m)\}$. Then \[
\left(o, \varGam(\alpha_{i(1)}), \varGam(\beta_{i(1)}), \varGam(\gamma_{i(1)}), \varGam(\delta_{i(1)}), \ldots, \varGam(\alpha_{i(m)}), \varGam(\beta_{i(m)}), \varGam(\gamma_{i(m)}), \varGam(\delta_{i(m)}), w_{n+1, 2}^{-}o\right)
\]
is $D_{0}$-semi-aligned.
\end{prop}
On Gromov hyperbolic spaces, this corresponds to \cite[Lemma 5.3]{gouezel2022exponential}. Before proving the entire statement, let us prove two small parts of it.

\begin{lem}\label{lem:pivotJtn}
For any $\mathbf{s} \in S^{4n}$ and $1 \le i \le n$, $\left(\varGam(\alpha_{i}), \varGam(\beta_{i})\right)$ and $\left(\varGam(\gamma_{i}), \varGam(\delta_{i})\right)$ are $D_{0}$-aligned.
\end{lem}

\begin{proof}
Let us prove that $\left(\varGam(\alpha_{i}), \varGam(\beta_{i})\right) = \left( w_{i, 1}^{-} \bar{\Gamma}^{-} (\alpha_{i}), w_{i, 1}^{-} \Gamma^{+}(\beta_{i}) \right)$ is $D_{0}$-aligned, or equivalently, that $\left(\bar{\Gamma}^{-}(\alpha_{i}), \Gamma^{+}(\beta_{i})\right)$ is $D_{0}$-aligned. When $\alpha_{i} = \beta_{i}$, this is guaranteed by the definition of $K_{0}$-Schottky sets.

Now suppose $\alpha_{i} \neq \beta_{i}$. First, $(\bar{\Gamma}^{-}(\alpha_{i}), o)$ is $0$-aligned. Second, $\left(a_{i}^{-1} o, \Gamma^{-}(\alpha_{i})\right)$ is not $K_{0}$-aligned as $d(o, a_{i}^{-1} o) \ge 100E_{0} \ge K_{0}$. Then by the Schottky property of $S$, $\left(a_{i}^{-1} o, \Gamma^{+}(\beta_{i})\right)$ is $K_{0}$-aligned. Now Lemma \ref{lem:1segment} tells us that $\left(\bar{\Gamma}^{-}(\alpha_{i}), \Gamma^{+}(\beta_{i})\right)$ is $D_{0}$-aligned.

The alignment of $\left(\varGam(\gamma_{i}), \varGam(\delta_{i})\right)$ holds for the same reason.
\end{proof}

\begin{lem}[{\cite[Lemma 3.2]{choi2024pseudo-anosovs}}]\label{lem:intermediate}
Let $k \in \Z_{>0}$. Let $l< m$ be consecutive elements in $P_{k}$, i.e., $m \in P_{k}$ and $l = \max (P_{k} \cap \{1, \ldots, m-1\})$. Then $\left(\varGam(\delta_{l}), \varGam(\alpha_{m}) \right)$ is $D_{0}$-semi-aligned.
\end{lem}

\begin{proof}
$l, m \in P_{k}$ implies that $l \in P_{l}$ and $l, m \in P_{m}$. In particular, $l$ and $m$ are newly chosen at step $l$ and $m$, respectively, by fulfilling Criterion (A). Hence, $(\varGam(\delta_{l}), w_{l+1, 2}^{-}o)$ and $(z_{m-1}, \varGam(\alpha_{m}))$ are $K_{0}$-aligned ($\ast$), and $z_{l} = w_{l, 1}^{+}o$. Moreover, we have $P_{m} = P_{m-1} \cup \{m\}$ and $l = \max P_{m-1}$. 

If $l = m-1$ and $m$ was newly chosen at step $m = l+1$, then $z_{m-1} = z_{l} = w_{l,1}^{+}o$ holds. Lemma \ref{lem:1segment} and ($\ast$) imply that $\left(\varGam(\delta_{l}), \varGam(\alpha_{m})\right)$ is $D_{0}$-aligned.

If $l < m-1$, then $l=\max P_{m-1}$ has survived at step $m-1$ by fulfilling Criterion (B); there exist $j > l$ such that $\big( \varGam(\delta_{l}), \varGam(\beta_{j})\big)$ is $D_{0}$-semi-aligned and $\big(\varGam(\beta_{j}), w_{n+1, 2}^{-} o\big)$ is $K_{0}$-aligned. Furthermore, $z_{m-1}$ equals $w_{j, 1}^{-} o$, the beginning point of $\varGam(\beta_{j})$. 

Note that $(z_{m-1}, \varGam(\alpha_{m})\big)$ is $K_{0}$-aligned by ($\ast$).  Lemma \ref{lem:1segment} then asserts that $\left(\varGam(\beta_{j}), \varGam(\alpha_{m})\right)$ is $D_{0}$-aligned. Concatenating the two $D_{0}$-semi-aligned sequences, we conclude that $\left(\varGam(\delta_{l}), \varGam(\alpha_{m})\right)$ is $D_{0}$-semi-aligned.
\end{proof}

\begin{proof}[Proof of Proposition \ref{prop:extremal}]
Having established Lemma \ref{lem:intermediate}, it remains to prove that:
\begin{itemize}
\item $\left(o, \varGam(\alpha_{i(1)})\right)$ is $K_{0}$-aligned;
\item for $1 \le t \le m$, $\left(\varGam(\alpha_{i(t)}), \varGam(\beta_{i(t)}), \varGam(\gamma_{i(t)}), \varGam(\delta_{i(t)})\right)$ is $D_{0}$-aligned;
\item $\left(\varGam(\delta_{i(m)}), w_{n+1, 2}^{-}o\right)$ is $D_{0}$-semi-aligned.
\end{itemize}

Note that for each $t =1, \ldots, m$, $i(t)$ is newly chosen as a pivotal time at step $i(t)$ by fulfilling Criterion (A). In particular, we have that: \begin{itemize}
\item $\left(\varGam(\alpha_{i(t)}), \varGam(\beta_{i(t)})\right)$ is $D_{0}$-aligned (Lemma \ref{lem:pivotJtn});
\item $\left(\varGam(\beta_{i(t)}), \varGam(\gamma_{i(t)})\right)$ is $D_{0}$-aligned since $\left(\varGam(\beta_{i(t)}), w_{n, 1}^{+}o\right)$ and $\left(w_{i(t), 0}^{-}o, \varGam(\gamma_{i(t)})\right)$ are $K_{0}$-aligned (Lemma \ref{lem:1segment}), and
\item $\left(\varGam(\gamma_{i(t)}), \varGam(\delta_{i(t)})\right)$ is $D_{0}$-aligned (Lemma \ref{lem:pivotJtn}).
\end{itemize}
This guarantees the second item.

We also note that $P_{i(1)-1} = \emptyset$. Indeed, any $j$ in $P_{i(1) - 1}$ is smaller than $i(1)$ and would have survived in $P_{i(1)}$ (since what happened at step $i(1)$ was adding an element, not deleting some). Since $i(1)$ was not deleted at any later step, such $j$ would also not be deleted till the end and should have appeared in $P_{n}$. Since $i(1)$ is the earliest pivotal time in $P_{n}$, no such $j$ exists. Hence, $z_{i(1) -1} = o$ and Criterion (A) for $i(1)$ leads to the first item.

We now observe how $i(m)$ survived in $P_{n}$. If $i(m) = n$, then it was newly chosen at step $n$ by fulfilling Criterion (A). In particular, $(\varGam(\delta_{n}), w_{n+1, 2}^{-}o)$ is $K_{0}$-aligned as desired.

If $i(m) \neq n$, then it has survived at step $n$ as the last pivotal time by fulfilling Criterion (B). In particular, there exist $j > i(m)$ such that $\big(\varGam(\delta_{i(m)}), \varGam(\beta_{j}) \big)$ is $D_{0}$-semi-aligned and such that $(\varGam(\beta_{j}), w_{n+1, 2}^{-} o)$ is $K_{0}$-aligned. In particular,  $\big(\varGam(\delta_{i(m)}), \varGam(\beta_{j}) , w_{n+1, 2}^{-} o\big)$ is $D_{0}$-semi-aligned.
\end{proof}

Next, we study when $P_{n} = P_{n-1} \cup \{n\}$ happens, i.e., a new pivotal time is added to the set of pivotal times. This will guide us how to pivot the direction at a pivotal time without affecting the set of pivotal times. Recall that we draw $\alpha_{i}, \beta_{i}, \gamma_{i}, \delta_{i}$'s from $S$ with the uniform measure.

\begin{lem}\label{lem:0thCasePivot}
Let us fix $\mathbf{w} = (w_{i})_{i}$, $\mathbf{v} = (v_{i})_{i}$ and $\mathbf{s} \in S^{4(n-1)}$. Then  \[
\Prob\Big( \# P_{n}(\mathbf{s}, \alpha_{n}, \beta_{n}, \gamma_{n}, \delta_{n}) = \# P_{n-1}(\mathbf{s}) + 1\Big)\ge 1-4/N_{0}.
\]
\end{lem}

\begin{proof}
Recall Criterion (A) for $\#P_{n} = \# P_{n-1} + 1$. We will investigate the four required conditions one-by-one.

\begin{figure}
\centering
\begin{tikzpicture}
\def\c{1.1}
\fill (0, 0) circle (0.05);
\draw[thick] (0.8*\c, -2*\c) -- (2*\c, -2.5*\c) -- (3.2*\c, -2*\c);
\draw[thick] (3.8*\c, 0) -- (5*\c, 0.5*\c) -- (6.2*\c, 0);
\fill (7*\c, -2*\c) circle (0.05);

\draw[dashed, ->] (0, 0) .. controls (0.55*\c, -0.5*\c) and (0.9*\c, -1.5*\c) .. (1.06*\c, -2.06*\c);

\draw[shift={(7*\c, -2*\c)}, rotate=180, dashed, ->] (0, 0) .. controls (0.55*\c, -0.5*\c) and (0.9*\c, -1.5*\c) .. (1.06*\c, -2.06*\c);

\draw[shift={(3.2*\c, -2*\c)}, yscale=-1, dashed, ->] (0, 0) .. controls (0.4*\c, -0.6*\c) and (0.7*\c, -1.5*\c) .. (0.808*\c, -2.06*\c);

\draw[dashed, ->] (5*\c, 0.5*\c) .. controls (3.4*\c, 0.75*\c) and (2.7*\c, -0.6*\c) .. (2.9*\c, -2.07*\c);

\draw (0, 0) node[above] {$z_{n-1}$};
\draw (0.3*\c, -2*\c) node {$w_{n, 2}^{-}o$};

\draw (3.7*\c, -2*\c) node {$w_{n, 0}^{-}o$};
\draw (2*\c, -2.75*\c) node {$w_{n, 1}^{-}o$};

\draw (4.55*\c, 0.1*\c) node {$\gamma_{n}$};
\draw (5.45*\c, 0.1*\c) node {$\delta_{n}$};

\draw (5*\c, 0.8*\c) node {$w_{n, 1}^{+}o$};
\draw (6.7*\c, 0) node {$w_{n, 2}^{+}o$};

\draw (7*\c, -1.95*\c) node[below] {$w_{n+1, 2}^{-}o$};

\draw (3.95*\c, -1*\c) node {$v_{n}$}; 
\draw (6.65*\c, -1*\c) node {$w_{n}$};

\draw (1.2*\c, -2.4*\c) node {$\alpha_{n}$};
\draw (2.8*\c, -2.4*\c) node {$\beta_{n}$};

\end{tikzpicture}
\caption{Schematics for Criteria \ref{eqn:pivotingCond1}, \ref{eqn:pivotingCond2}, \ref{eqn:pivotingCond3} and \ref{eqn:pivotingCond4}.}
\label{fig:0thCasePivot}
\end{figure}

First, the condition \begin{equation}\label{eqn:pivotingCond1}
\diam\left(\pi_{\varGam(\gamma_{n})} (w_{n, 0}^{-}o) \cup w_{n, 0}^{+}o\right) = \diam\left(\pi_{\Gamma^{+}(\gamma_{n})} (v_{n}^{-1} o) \cup o\right) < K_{0}
\end{equation}
depends only on $\gamma_{n}$. This holds for at least $(\#S - 1)$ choices in $S$ by the $K_{0}$-Schottky-ness of $S$.

 Similarly, the condition \begin{equation}\label{eqn:pivotingCond2}
\diam\left(\pi_{\varGam(\delta_{n})} (w_{n+1, 2}^{-}o) \cup w_{n, 2}^{+}o\right) = \diam\left(\pi_{\Gamma^{-}(\delta_{n})} (w_{n} o) \cup o\right) < K_{0}
\end{equation}
depends only on $\delta_{n}$, and holds for at least $(\#S - 1)$ choices in $S$.

Fixing the choice of $\gamma_{n}$, the condition \begin{equation}\label{eqn:pivotingCond3}
\diam\left(\pi_{\varGam(\beta_{n})} (w_{n, 1}^{+}o) \cup w_{n, 0}^{-}o\right) = \diam\left(\pi_{\Gamma^{-}(\beta_{n})} (v_{n} c_{n} o) \cup o\right) < K_{0}
\end{equation}
depends only on $\beta_{n}$. This holds for at least $(\#S - 1)$ choices in $S$.

We now additionally fix the choice of $s = (\alpha_{1}, \beta_{1}, \gamma_{1}, \delta_{1}, \ldots, \alpha_{n-1}, \beta_{n-1}, \gamma_{n-1}, \delta_{n-1})$; in particular, $w_{n, 2}^{-}$ and $z_{n-1}$ are now determined. Then the condition\begin{equation}\label{eqn:pivotingCond4}
\diam\left(\pi_{\varGam(\alpha_{n})} (z_{n-1}) \cup w_{n, 2}^{-}o\right) = \diam\left(\pi_{\Gamma^{+}(\alpha_{n})} \left((w_{n, 2}^{-})^{-1} z_{n-1}\right) \cup o\right) < K_{0}
\end{equation}
depends on $\alpha_{n}$. This holds for at least $(\#S - 1)$ choices of $\alpha_{n}$.

In summary, the probability that Criterion (A) holds is at least \[
\frac{\# S - 1}{\#S} \cdot \frac{\# S - 1}{\#S} \cdot \frac{\# S - 1}{\#S} \cdot \frac{\# S - 1}{\#S} \ge 1 - \frac{4}{N_{0}} \qedhere
\]
\end{proof}

We now define the set $\pivotComplete$ of triples $(\beta, \gamma, v) \in S^{2} \times G$ that satisfy Condition \ref{eqn:pivotingCond1} and \ref{eqn:pivotingCond3}: \[\begin{aligned}
&\pivotComplete:= \left\{ (\beta, \gamma, v) \in S^{2} \times G  :  \big(\bar{\Gamma}^{-}(\beta),  v \Pi(\gamma) o\big), \big( v^{-1} o, \Gamma^{+}(\gamma)\big)\,\,\textrm{are $K_{0}$-aligned} \right\}.
\end{aligned}
\]
We also define its section for each $v \in G$:\[
\pivotComplete(v):= \left\{ (\beta, \gamma) \in S^{2}  :  \big(\bar{\Gamma}^{-}(\beta),  v \Pi(\gamma) o\big), \big( v^{-1} o, \Gamma^{+}(\gamma)\big)\,\,\textrm{are $K_{0}$-aligned} \right\}.
\]
While checking Display \ref{eqn:pivotingCond1} and \ref{eqn:pivotingCond3}, we observed that $\#\pivotComplete(v) \ge \#S^{2} - 2\#S$ for each $v\in G$. We now define pivoting.

\begin{lem} \label{lem:pivotEquiv}
Let $\mathbf{s} =  (\alpha_{1}, \beta_{1}, \gamma_{1}, \delta_{1}, \ldots, \alpha_{n}, \beta_{n}, \gamma_{n}, \delta_{n})$ be a choice drawn from $S^{4n}$ and let $\mathbf{w}, \mathbf{v}$ be auxiliary sequences in $G$.

Let $k \in P_{n}(\mathbf{s}; \mathbf{w}, \mathbf{v})$ and let ($\bar{\mathbf{s}}$; $\mathbf{w}, \bar{\mathbf{v}})$ be obtained from $(\mathbf{s}; \mathbf{w}, \mathbf{v})$ by replacing $(\beta_{k}, \gamma_{k}, v_{k})$ with some $(\bar{\beta}_{k}, \bar{\gamma}_{k}, \bar{v}_{k})$ chosen from $\tilde{S}$.

Then  $P_{l}(\mathbf{s}; \mathbf{w}, \mathbf{v}) = P_{l}(\bar{\mathbf{s}}; \mathbf{w}, \bar{\mathbf{v}})$ for any $1 \le l \le n$. 
\end{lem}
On Gromov hyperbolic spaces, this corresponds to \cite[Lemma 5.7]{gouezel2022exponential}.

\begin{proof}
Since $\alpha_{1}, \beta_{1}, \gamma_{1}, \delta_{1}, \ldots, \alpha_{k-1}, \beta_{k-1}, \gamma_{k-1}, \delta_{k-1}$ are intact, $P_{l}(\mathbf{s}) = P_{l}(\bar{\mathbf{s}})$ and $\pivotComplete_{l}(\mathbf{s}) = \pivotComplete_{l}(\bar{\mathbf{s}})$ hold for $l=0, \ldots, k-1$. At step $k$, $\alpha_{k}$ and $\delta_{k}$ satisfy Condition \ref{eqn:pivotingCond4} and Condition \ref{eqn:pivotingCond2} since $k \in P_{n}(\mathbf{s})$. Furthermore, $\bar{\beta}_{k}$ and $\bar{\gamma}_{k}$ satisfy Condition \ref{eqn:pivotingCond1} and \ref{eqn:pivotingCond3} for the new choice $\bar{v}_{k}$: \[
\diam\big(\pi_{\Gamma^{+}(\bar{\gamma}_{k})}(\bar{v}_{k}^{-1} o) \cup o\big) < K_{0} \quad \textrm{and}\quad \diam\big(\pi_{\Gamma^{-}(\bar{\beta}_{k})}(\bar{v}_{k}\bar{c}_{k} o) \cup o\big) < K_{0}, 
\]
since $(\bar{\beta}_{k}, \bar{\gamma}_{k}, \bar{v}_{k}) \in \pivotComplete$. Hence, $k$ is newly added in $P_{k}(\bar{\mathbf{s}})$ and \[
P_{k}(\bar{\mathbf{s}}) = P_{k-1}(\bar{\mathbf{s}}) \cup \{k\} = P_{k-1}(\mathbf{s}) \cup \{k\} = P_{k}(\mathbf{s}).
\] Meanwhile, $z_{k}$ is modified into $\bar{z}_{k} = \bar{w}_{k, 1}^{+}o = g w_{k, 1}^{+}o=gz_{k}$, where $g :=  w_{k, 2}^{-} a_{k}\bar{b}_{k} \bar{v}_{k}\bar{c}_{k} (w_{k, 2}^{-} a_{k}b_{k}v_{k}c_{k})^{-1}$. More generally, we have \begin{equation}\begin{aligned}\label{eqn:pivotedLoci}
\bar{w}_{l, t}^{-} = g w_{l, t}^{-} \quad (t \in \{0, 1, 2\},\, l > k), \\
\bar{w}_{l, 0}^{+} = g w_{l, 0}^{+} \quad\quad\quad\quad\quad\quad\,\,(l > k),\\
\bar{w}_{l, t}^{+} = gw_{l, t}^{+}\quad\quad (t \in \{1, 2\}, l \ge k).
\end{aligned}
\end{equation}

We now claim the following for $k < l \le n$: \begin{enumerate}
\item If $s$ fulfills Criterion (A) at step $l$, then so does $\bar{\mathbf{s}}$.
\item If not and if $(i, j)$ is the maximal pair of indices for $s$ in Criterion (B) at step $l$, then it is also the maximal one for $\bar{\mathbf{s}}$ at step $l$.
\item In both cases, we have $P_{l}(\mathbf{s}) = P_{l}(\bar{\mathbf{s}})$ and $\bar{z}_{l} = g z_{l}$.
\end{enumerate}
Assuming the third item for $l-1$: $P_{l-1}(\mathbf{s}) = P_{l-1}(\bar{\mathbf{s}})$ and $\bar{z}_{l-1} = g z_{l-1}$, Equality \ref{eqn:pivotedLoci} implies the first item. In this case we deduce $P_{l}(\mathbf{s}) = P_{l-1}(\mathbf{s}) \cup \{l\} = P_{l-1}(\bar{\mathbf{s}}) \cup \{l\} = P_{l}(\bar{\mathbf{s}})$ and $\bar{z}_{l} = \bar{w}_{l, 1}^{+}o = gw_{l, 1}^{+}o = gz_{l}$, the third item for $l$.

Furthermore, Equality \ref{eqn:pivotedLoci}  implies that $i$ in $P_{l-1}(\mathbf{s}) \cap \{k, \ldots, l-1\} = P_{l-1}(\bar{\mathbf{s}})\cap \{k, \ldots, l-1\}$ and $j>i$ work for $\mathbf{s}$ in Criterion (B) if and only if they work for $\bar{\mathbf{s}}$. Such $i$ can be found in $\{k, \ldots, l-1\}$, because $k$ survived in $P_{n}(\mathbf{s})$ and should not have been erased at step $l$. Hence, the maximal pair $(i, j)$ for $\mathbf{s}$ is also maximal for $\bar{\mathbf{s}}$. We then deduce $P_{l}(\mathbf{s}) = P_{l-1}(\mathbf{s}) \cap \{1, \ldots, i\} = P_{l-1}(\bar{\mathbf{s}}) \cap \{1, \ldots, i\} = P_{l}(\bar{\mathbf{s}})$ and $\bar{z}_{l} = \bar{w}_{j, 1}^{-}o  = gw_{j, 1}^{-}o = gz_{l}$ (using $j > i$), the third item for $l$.
\end{proof}

For $\mathbf{s}, \mathbf{s}' \in S^{4n}$ and sequences $\mathbf{w}, \mathbf{v}$, $\bar{\mathbf{v}}$ in $G$, we say that $(\bar{\mathbf{s}}; \mathbf{w}, \bar{\mathbf{v}})$ is \emph{pivoted from} $(\mathbf{s}; \mathbf{w}, \mathbf{v})$ if: \begin{itemize}
\item $\alpha_{i} = \bar{\alpha}_{i}$, $\delta_{i} = \bar{\delta}_{i}$ for all $i \in \{1, \ldots, n\}$;
\item $(\bar{\beta}_{i}, \bar{\gamma}_{i}, \bar{v}_{i}) \in \pivotComplete$ for each $i \in P_{n}(\mathbf{s}; \mathbf{w}, \mathbf{v})$, and 
\item $(\beta_{i}, \gamma_{i}, v_{i})= (\bar{\beta}_{i}, \bar{\gamma}_{i}, \bar{v}_{i})$ for each $i \in \{1, \ldots, n\} \setminus  P_{n}(\mathbf{s}; \mathbf{w}, \mathbf{v})$. 
\end{itemize}
By Lemma \ref{lem:pivotEquiv}, being pivoted from each other is an equivalence relation.

Fixing $\mathbf{w}$ and $\mathbf{v}$, for each $\mathbf{s} \in S^{4n}$ let $\mathcal{E}_{n}(\mathbf{s})$ be the equivalence class of $s$: \[
\mathcal{E}_{n}(\mathbf{s}) := \big\{ \bar{\mathbf{s}}  \in S^{4n} : ( \bar{\mathbf{s}}; \mathbf{w}, \mathbf{v})\,\,\textrm{is pivoted from $(\mathbf{s}; \mathbf{w}, \mathbf{v} )$}\big\}.
\]
We endow $\mathcal{E}_{n}(\mathbf{s})$ with the conditional probability of the uniform measure on $S^{4n}$. We now claim that $\#P_{n+1} - \#P_{n}$ conditioned on an equivalence class $\mathcal{E}_{n}(\mathbf{s})$ till step $n$ and the choice at step $n+1$ has uniform exponential tail.

\begin{prop} \label{prop:pivotCondition}
Fix $\mathbf{w} = (w_{i})_{i=0}^{\infty}$ and $\mathbf{v} = (v_{i})_{i=1}^{\infty}$. For each $j \ge0$ and $\mathbf{s} \in S^{4n}$, \[
\Prob\Big(\# P_{n+1}(\tilde{\mathbf{s}}, \alpha_{n+1}, \beta_{n+1}, \gamma_{n+1}, \delta_{n+1}) < \# P_{n}(\mathbf{s}) - j \, \Big| \, \tilde{\mathbf{s}} \in \mathcal{E}_{n}(\mathbf{s}), \,(\alpha_{n+1}, \beta_{n+1}, \gamma_{n+1}, \delta_{n+1}) \in S^{4}\Big)
\]
is less than $(4/N_{0})^{j+1}$.
\end{prop}

On Gromov hyperbolic spaces, this corresponds to \cite[Lemma 5.8]{gouezel2022exponential}.

\begin{proof}
An element $\tilde{\mathbf{s}} \in \mathcal{E}_{n}(\mathbf{s})$ is determined by its coordinates $(\tilde{\beta}_{k}, \tilde{\gamma}_{k})_{k \in P_{n}(\mathbf{s})}$ subject to the condition $(\tilde{\beta}_{k}, \tilde{\gamma}_{k}, v_{k}) \in \pivotComplete$. We consider a finer equivalence class by additionally fixing the coordinates $\gamma_{k}$'s: for $\tilde{\mathbf{s}} \in \mathcal{E}_{n}(\mathbf{s})$, let $\mathcal{E}_{n}'(\tilde{\mathbf{s}})$ be the set of $\bar{\mathbf{s}} \in \mathcal{E}_{n}(\mathbf{s})$ such that $\bar{\gamma}_{k} = \tilde{\gamma}_{k}$ for all $k$. Then $\mathcal{E}_{n}(\mathbf{s})$ is partitioned into $\{ \mathcal{E}_{n}'(\tilde{\mathbf{s}}): \tilde{\mathbf{s}} \in \mathcal{E}_{n}(\mathbf{s})\}$, and it suffices to establish the estimates on each $\mathcal{E}_{n}'(\tilde{\mathbf{s}})$. Henceforth, we will prove that\[
\Prob\Big(\# P_{n+1}(\tilde{\mathbf{s}}, \alpha_{n+1}, \beta_{n+1}, \gamma_{n+1}, \delta_{n+1}) < \# P_{n}(\mathbf{s}) - j \, \Big| \, \tilde{\mathbf{s}} \in \mathcal{E}_{n}'(\mathbf{s}), \,(\alpha_{n+1}, \beta_{n+1}, \gamma_{n+1}, \delta_{n+1}) \in S^{4}\Big)
\]
is less than $(4/N_{0})^{j+1}$ for each $\mathbf{s} = (\alpha_{1}, \beta_{1}, \gamma_{1}, \delta_{1}, \ldots, \alpha_{n}, \beta_{n}, \gamma_{n}, \delta_{n}) \in S^{4n}$ and $j \ge 0$.

Recall that we are fixing the sequences $\mathbf{w}$ and $\mathbf{v}$ throughout the proof. Let us define \[
\pivotComplete_{k} :=\left\{ \beta \in S : \big(\bar{\Gamma}^{-}(\beta), v_{k} \Pi(\gamma_{k}) o\big)\,\,\textrm{is $K_{0}$-aligned}\right\}.
\]
Then $\mathcal{E}_{n}'(\mathbf{s})$ is parametrized by $\prod_{i \in P_{n}(\mathbf{s})} \pivotComplete_{k}$ with the uniform measure. Let\[
\mathcal{A} := \Big\{(\alpha_{n+1}, \beta_{n+1}, \gamma_{n+1}, \delta_{n+1}) \in S^{4}: \#P_{n+1}(\mathbf{s}, \alpha_{n+1}, \beta_{n+1}, \gamma_{n+1}, \delta_{n+1}) = \#P_{n}(\mathbf{s})+1 \Big\}.
\] Lemma \ref{lem:0thCasePivot} implies that $\Prob(\mathcal{A}) \ge 1-4/N_{0}$ with respect to the uniform measure on $S^{4}$. Note that for each element $(\alpha_{n+1}, \beta_{n+1}, \gamma_{n+1}, \delta_{n+1})$ of $\mathcal{A}$, we have \[
P_{n}(\mathbf{s}) \subseteq P_{n}(\mathbf{s}) \cup \{n+1\} = P_{n+1}(\mathbf{s}, \alpha_{n+1}, \beta_{n+1}, \gamma_{n+1}, \delta_{n+1}).
\]
Hence, for each $\tilde{\mathbf{s}}\in \mathcal{E}_{n}'(\mathbf{s})$, $(\tilde{\mathbf{s}}, \alpha_{n+1}, \beta_{n+1}, \gamma_{n+1}, \delta_{n+1})$ (as a choice in $S^{4(n+1)}$) is pivoted from $(\mathbf{s}, \alpha_{n+1}, \beta_{n+1}, \gamma_{n+1}, \delta_{n+1})$ and $P_{n+1}(\tilde{\mathbf{s}}) = P_{n+1}(\mathbf{s}) = P_{n}(\mathbf{s}) \cup \{n+1\} = P_{n}(\tilde{\mathbf{s}}) \cup \{n+1\}$. Thanks to this, we have\[\begin{aligned}
&\Prob\Big( \#P_{n+1}(\tilde{\mathbf{s}}, \alpha_{n+1}, \beta_{n+1}, \gamma_{n+1}, \delta_{n+1}) < \#P_{n}(\tilde{\mathbf{s}}) \, \Big| \, \tilde{\mathbf{s}} \in \mathcal{E}_{n}'(\mathbf{s}), (\alpha_{n+1}, \beta_{n+1}, \gamma_{n+1}, \delta_{n+1}) \in S^{4} \Big) \\& \le 1 - \Prob(\mathcal{A}) \le 4/N_{0}.
\end{aligned}
\]
This settles the case $j=0$.

Now let $j = 1$. The event under discussion becomes void when $\# P_{n}(\mathbf{s}) < 2$. Excluding such cases, let $l< m$ be the last 2 elements of $P_{n}(\mathbf{s})$. We now freeze the coordinates $\beta_{k}$'s except for $k=m$. Namely, for $\tilde{\mathbf{s}} \in \mathcal{E}_{n}'(\mathbf{s})$, let $E^{(m)}(\tilde{\mathbf{s}})$ be the set of $\bar{\mathbf{s}} \in \mathcal{E}_{n}'(\mathbf{s})$ such that $\bar{\beta}_{k} = \tilde{\beta}_{k}$ for $k \in P_{n}(\mathbf{s}) \setminus \{m\}$. Then $\{E^{(m)}(\tilde{\mathbf{s}}) : \tilde{\mathbf{s}} \in \mathcal{E}_{n}'(\mathbf{s}) \}$ becomes a partition of $\mathcal{E}_{n}'(\mathbf{s})$, and $E^{(m)}(\tilde{\mathbf{s}})$ is parametrized by $\bar{\beta}_{m} \in \pivotComplete_{m}$ with the uniform measure. Note that $\pivotComplete_{m}$ has at least $\#S - 1$ elements. 

Fixing $(\alpha_{n+1}, \beta_{n+1}, \gamma_{n+1}, \delta_{n+1}) \in S^{4}$, and let $F^{(m)}(\tilde{\mathbf{s}})$ be the set of $\bar{\mathbf{s}} \in E^{(m)}(\tilde{\mathbf{s}})$ such that $\left(\varGam(\bar{\beta}_{m}), \bar{w}_{n+2, 2}^{-} o \right)$ $K_{0}$-aligned, or more precisely, \begin{equation}\label{eqn:addCondPivot}\begin{aligned}
&\diam\left(\pi_{\Gamma^{-1}(\bar{\beta}_{m})}((\tilde{w}_{m, 0}^{-})^{-1} \tilde{w}_{n, 2}^{-} a_{n+1} b_{n+1} v_{n+1} c_{n+1}d_{n+1} w_{n+1} o) \cup o\right) \\
&= \diam\left(o \cup \pi_{\Gamma^{-1}(\bar{\beta}_{m})} (v_{m} \tilde{c}_{m}\tilde{d}_{m} w_{m}  \cdots \tilde{c}_{n} \tilde{d}_{n} w_{n} \cdot a_{n+1}b_{n+1}v_{n+1} c_{n+1} d_{n+1} w_{n+1} o) \right) < K_{0}.
\end{aligned}
\end{equation}
This amounts to requiring a new Schottky condition to $\bar{\beta}_{m}$, in addition to the alignment of $\big( \bar{\Gamma}^{-1}(\bar{\beta}_{m}), v_{m} \Pi(\gamma_{m}) o\big)$; there are at least $\#S - 2$ choices that additionally satisfy this.

We now claim $\# P_{n+1}(\bar{\mathbf{s}}, \alpha_{n+1}, \beta_{n+1}, \gamma_{n+1}, \delta_{n+1}) \ge \# P_{n}(\mathbf{s}) - 1$ for $\bar{\mathbf{s}} \in F^{(m)}(\tilde{\mathbf{s}})$. First, since $l<m$ are consecutive elements in $P_{n}(\mathbf{s}) = P_{n}(\bar{\mathbf{s}})$, Lemma \ref{lem:intermediate} asserts that $\left(\varGam(\bar{\delta}_{l}),  \varGam(\bar{\alpha}_{m}) \right)$
is $D_{0}$-semi-aligned. Moreover, Lemma \ref{lem:pivotJtn} and Condition \ref{eqn:addCondPivot} imply that \[
\left( \varGam(\bar{\alpha}_{m}), \varGam(\bar{\beta}_{m})\right), \quad \left(\varGam(\bar{\beta}_{m}), \bar{w}_{n+2, 2}^{-} o\right)
\] are $D_{0}$-aligned and $K_{0}$-aligned, respectively. These together imply that \[
\left(\varGam(\bar{\delta}_{l}), \varGam(\bar{\beta}_{m})\right), \quad \left(\varGam(\bar{\beta}_{m}), \bar{w}_{n+2, 2}^{-} o\right)
\]
are $D_{0}$-semi-aligned and $K_{0}$-aligned, respectively: the pair $(l, m)$ qualifies Criterion (B) at step $n+1$. Hence, $P_{n+1}(\bar{\mathbf{s}}, \alpha_{n+1}, \beta_{n+1}, \gamma_{n+1}, \delta_{n+1}) \supseteq P_{n}(\bar{\mathbf{s}}) \cap \{1, \ldots, l\}$.

As a result, for each $\tilde{\mathbf{s}} \in \mathcal{E}_{n}(\mathbf{s})$ and $(\alpha_{n+1}, \ldots, \delta_{n+1}) \in S^{4}$ we have \[\begin{aligned}
&\Prob\Big(\#P_{n+1}(\bar{\mathbf{s}}, \alpha_{n+1}, \beta_{n+1}, \gamma_{n+1}, \delta_{n+1}) < \#P_{n}(\mathbf{s}) - 1 \, \Big| \, \bar{\mathbf{s}} \in E^{(m)}(\tilde{\mathbf{s}})\Big) \\
&\le  \frac{\# \left[ E^{(m)}(\tilde{\mathbf{s}}) \setminus F^{(m)}(\tilde{\mathbf{s}})\right]}{\# E^{(m)}(\tilde{\mathbf{s}})} \le \frac{2}{\# S - 1} \le \frac{3}{N_{0}}.
\end{aligned}
\]
Since $\{ E^{(m)}(\tilde{\mathbf{s}}): \tilde{\mathbf{s}} \in \mathcal{E}_{n}(\mathbf{s})\}$ partitions $\mathcal{E}_{n}(\mathbf{s})$, we deduce \[
\Prob\Big(\# P_{n+1}(\tilde{\mathbf{s}}, \alpha_{n+1}, \beta_{n+1}, \gamma_{n+1}, \delta_{n+1}) < \# P_{n}(\mathbf{s}) - 1 \, \Big| \, \tilde{\mathbf{s}} \in \mathcal{E}_{n}(\mathbf{s})\Big) \le \frac{3}{N_{0}}.
\]
for each $(\alpha_{n+1}, \beta_{n+1}, \gamma_{n+1}, \delta_{n+1}) \in S^{4}$. Moreover, this probability vanishes when $(\alpha_{n+1}, \ldots, \delta_{n+1}) \in \mathcal{A}$. Since $\Prob\big((\alpha_{n+1}, \ldots, \delta_{n+1}) \in \mathcal{A} \,\big|\, (\alpha_{n+1}, \ldots, \delta_{n+1}) \in S^{4}\big) \ge 1-4/N_{0}$, we deduce that \begin{equation}\label{eqn:firstPivotEst}\begin{aligned}
&\Prob\Big(\#P_{n+1}(\tilde{\mathbf{s}}, \alpha_{n+1}, \beta_{n+1}, \gamma_{n+1}, \delta_{n+1}) < \#P_{n}(\mathbf{s}) - 1 \, \Big| \, \tilde{\mathbf{s}} \in \mathcal{E}_{n}(\mathbf{s}), (\alpha_{n+1}, \beta_{n+1}, \gamma_{n+1}, \delta_{n+1}) \in S^{4}\Big) \\
&\le \frac{4}{N_{0}} \cdot \frac{4}{N_{0}} \le \left(\frac{4}{N_{0}}\right)^{2}.
\end{aligned}
\end{equation}

Now let $j =2$. Excluding the void case, we assume that $\#P_{n}(\mathbf{s}) \ge 3$; let $l' < l<m$ be the last 3 elements. To ease the notation, for $\beta \in S$ we define \[
\mathbf{s}'(\beta) := (\alpha_{1}, \beta_{1}, \gamma_{1}, \delta_{1}, \ldots, \alpha_{m}, \beta, \gamma_{m}, \delta_{m}, \ldots, \alpha_{n}, \beta_{n}, \gamma_{n}, \delta_{n}).
\]
In other words, $\mathbf{s}'(\beta)$ is obtained from $\mathbf{s}$ by replacing $\beta_{m}$ with $\beta$. Now let \[
\mathcal{A}_{1} := \left\{ \left(  \beta, \alpha_{n+1}, \beta_{n+1}, \gamma_{n+1}, \delta_{n+1} \right) \in \pivotComplete_{m} \times S^{4} : \#P_{n+1}\left(\mathbf{s}'(\beta), \alpha_{n+1}, \ldots, \delta_{n+1}\right) \ge \#P_{n}(\mathbf{s}) - 1 \right\}.
\]
Equivalently, we are requiring \[
P_{n}(\mathbf{s}) \cap \{1, \ldots, l\} \subseteq P_{n+1}\left(\mathbf{s}', \alpha_{n+1}, \beta_{n+1}, \gamma_{n+1}, \delta_{n+1}\right).
\]
(This equivalence relies on the fact $P_{n}(\mathbf{s}') = P_{n}(\mathbf{s})$ due to Lemma \ref{lem:pivotEquiv}.)

\begin{obs}\label{obs:pivotLater2nd}
For each
 \[
\tilde{\mathbf{s}} = (\tilde{\alpha}_{k}, \tilde{\beta}_{k}, \tilde{\gamma}_{k}, \tilde{\delta}_{k})_{i=1}^{n}  \in \mathcal{E}_{n}'(\mathbf{s}), \quad (\alpha_{n+1}, \beta_{n+1}, \gamma_{n+1}, \delta_{n+1}) \in S^{4},
\] 
 $(\tilde{\beta}_{m},  \alpha_{n+1}, \beta_{n+1}, \gamma_{n+1}, \delta_{n+1}) \in \mathcal{A}_{1}$ if and only if $\#P_{n+1}(\tilde{\mathbf{s}}, \alpha_{n+1}, \ldots, \delta_{n+1}) \ge \# P_{n}(\mathbf{s}) - 1$.
\end{obs}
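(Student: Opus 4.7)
My plan is to derive the Observation as a consequence of the set equality
\[
P_k(\tilde{s}, \alpha_k, \beta_k, \gamma_k, \delta_k) = P_k(s'(\tilde{\alpha}_m, \tilde{\beta}_m, \tilde{\gamma}_m), \alpha_k, \beta_k, \gamma_k, \delta_k),
\]
from which both directions of the ``iff'' follow at once by comparing $\#P_k$ against the threshold $\#P_{k-1}(s) - 1$. Note that $(\tilde{\alpha}_m, \tilde{\beta}_m, \tilde{\gamma}_m) \in \tilde{S}_m(s)$ automatically since $m \in P_{k-1}(s)$ and $\tilde{s}$ is pivoted from $s$, so membership in $\mathcal{A}_1$ is precisely the LHS bound.

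To prove this equality, I first observe that both sequences lie in the equivalence class $\mathcal{E}_{k-1}(s)$: $\tilde{s}$ by hypothesis, and $s'(\tilde{\alpha}_m, \tilde{\beta}_m, \tilde{\gamma}_m)$ because it differs from $s$ only at position $m$, with an admissible $\tilde{S}_m(s)$-triple. Since pivoting is an equivalence relation (Lemma \ref{lem:pivotEquiv}), the two sequences can be connected by a chain of single-position pivotings at positions in $P_{k-1}(s) \setminus \{m\}$. It therefore suffices to show that each such single-position pivoting preserves $P_k(\cdot, \alpha_k, \beta_k, \gamma_k, \delta_k)$, and then iterate.

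For invariance under a single pivot at $i \in P_{k-1}(s)$, I would extend the inductive argument in the proof of Lemma \ref{lem:pivotEquiv} one step further. The pivot produces an isometric shift $g$ acting uniformly on all data at positions past $i$, so that $\bar{z}_{k-1} = g\hat{z}_{k-1}$, $\bar{y}_{k+1, 2}^{-} = g\hat{y}_{k+1, 2}^{-}$, and every Schottky axis $\bar{\varGam}(\cdot)$ at positions after $i$ equals $g$ times its $\hat{s}$-version. Criterion (A) at step $k$, being an isometry-invariant alignment condition on these shifted quantities, therefore holds for $\bar{s}$ iff for $\hat{s}$, and yields the same $P_k = P_{k-1} \cup \{k\}$.

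The main obstacle is criterion (B) at step $k$: a candidate sequence $\{i(1) < \cdots < i(N)\} \subseteq P_{k-1}$ may involve indices both below and above $i$, so that $g$ does not act uniformly on the axes $\varGam(\delta_{i(j)}), \varGam(\alpha_{i(j+1)}), \varGam(\beta_{i(j+1)})$ while still acting on $y_{k+1,2}^{-}$. To resolve this, I would exploit $i \in P_{k-1}(s)$ (so that $i$ itself is an admissible bridging index) together with Lemma \ref{lem:intermediate}, enlarging any candidate sequence into one that passes through $i$; the tail-piece lying entirely in $[i, k-1]$ then transforms uniformly under $g$ and carries the same alignment status on both sides, while the head-piece in $[1, i-1]$ is unchanged on both sides. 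Combining this with the maximality principle for $i(1)$ used in the original argument of Lemma \ref{lem:pivotEquiv}, one deduces that the maximal $i(1)$ coincides for $\bar{s}$ and $\hat{s}$, hence $P_k(\bar{s}, \alpha_k, \beta_k, \gamma_k, \delta_k) = P_k(\hat{s}, \alpha_k, \beta_k, \gamma_k, \delta_k)$, completing the induction.
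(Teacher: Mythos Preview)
Your strategy is to prove the unconditional set equality
\[
P_k(\tilde{s}, \alpha_k, \beta_k, \gamma_k, \delta_k) = P_k\bigl(s'(\tilde{\alpha}_m, \tilde{\beta}_m, \tilde{\gamma}_m), \alpha_k, \beta_k, \gamma_k, \delta_k\bigr)
\]
by pushing the inductive proof of Lemma~\ref{lem:pivotEquiv} one step further, to step $k$, while pivoting at a position $i \in P_{k-1}(s) \setminus \{m\}$. This is a stronger claim than the Observation requires, and your handling of Criterion~(B) at step $k$ has a real gap. The original proof of Lemma~\ref{lem:pivotEquiv} relies crucially on the fact that $i \in P_l(s)$, which forces the maximal $i(1)$ in Criterion~(B) to satisfy $i(1) \ge i$; only then do all relevant axes shift uniformly by $g$. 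At step $k$ you only know $i \in P_{k-1}(s)$, not $i \in P_k$, so the maximal $i(1)$ may well fall below $i$. Your proposed fix---``enlarge any candidate sequence to pass through $i$ via Lemma~\ref{lem:intermediate}''---does not address this: enlarging a sequence does not control its starting index $i(1)$, and in the mixed regime $i(1) < i \le i(N)$ the axes $\varGam(\alpha_i), \varGam(\beta_i)$ at the pivot position change \emph{non-isometrically} (since $\alpha_i, \beta_i$ themselves are replaced), so the alignment status of a sequence passing through $i$ is not obviously invariant.

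The paper avoids this difficulty entirely by \emph{not} attempting the unconditional equality. Instead it argues each direction of the ``iff'' separately, using the hypothesis of that direction to guarantee that the positions at which $\tilde{s}$ and $s'(\tilde{\alpha}_m, \tilde{\beta}_m, \tilde{\gamma}_m)$ differ---namely $P_{k-1}(s) \cap \{1, \ldots, l\}$---already lie in $P_k$ of the appropriate sequence. Lemma~\ref{lem:pivotEquiv} then applies \emph{at step $k$} with no extension needed. Concretely: if $(\tilde{\alpha}_m, \tilde{\beta}_m, \tilde{\gamma}_m, \alpha_k, \beta_k, \gamma_k, \delta_k) \in \mathcal{A}_1$, then by definition $P_{k-1}(s) \cap \{1,\ldots,l\} \subseteq P_k(s',\ldots)$, so $(\tilde{s}, \ldots)$ is pivoted from $(s',\ldots)$ in the sense of step $k$, giving $P_k(\tilde{s},\ldots) = P_k(s',\ldots)$; the converse is symmetric with the roles of $\tilde{s}$ and $s'$ swapped. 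This is both shorter and avoids the Criterion~(B) issue you ran into.
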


To see this, suppose first that $(\tilde{\beta}_{m}, \alpha_{n+1}, \beta_{n+1}, \gamma_{n+1}, \delta_{n+1}) \in \mathcal{A}_{1}$. Then $(\tilde{\mathbf{s}}, \alpha_{n+1}, \beta_{n+1}, \gamma_{n+1}, \delta_{n+1})$ is pivoted from $\left(\mathbf{s}'( \tilde{\beta}_{m}), \alpha_{n+1}, \beta_{n+1}, \gamma_{n+1}, \delta_{n+1}\right)$, as they differ only at entries $\beta_{k}$'s for $k\in P_{n}(\mathbf{s}) \cap \{1, \ldots, l\} \subseteq P_{n+1}\left(\mathbf{s}', \alpha_{n+1}, \beta_{n+1}, \gamma_{n+1}, \delta_{n+1}\right)$. Lemma \ref{lem:pivotEquiv} then implies that \[
P_{n}(\mathbf{s}) \cap \{1, \ldots, l\} \subseteq P_{n+1}(\mathbf{s}', \alpha_{n+1}, \beta_{n+1}, \gamma_{n+1}, \delta_{n+1}) = P_{n+1}(\tilde{\mathbf{s}}, \alpha_{n+1}, \beta_{n+1}, \gamma_{n+1}, \delta_{n+1})
\]
and $\#P_{n+1}(\tilde{\mathbf{s}}, \alpha_{n+1}, \beta_{n+1}, \gamma_{n+1}, \delta_{n+1}) \ge \# P_{n}(\mathbf{s}) - 1$. 

Conversely, suppose $\#P_{n+1}(\tilde{\mathbf{s}}, \alpha_{n+1}, \beta_{n+1}, \gamma_{n+1}, \delta_{n+1}) \ge \# P_{n}(\mathbf{s}) - 1$. ($\ast$) Recall that $P_{n}(\tilde{\mathbf{s}}) = P_{n}(\mathbf{s})$, and recall that  $P_{n+1}(\tilde{\mathbf{s}}, \alpha_{n+1}, \ldots, \delta_{n+1})$ is either an initial section of $P_{n}(\tilde{s})$ or contains $P_{n}(\tilde{s})$. Considering these, the assumption ($\ast)$ implies\[
P_{n}(\mathbf{s}) \cap \{1, \ldots, l\} \subseteq P_{n+1}(\tilde{\mathbf{s}}, \alpha_{n+1}, \beta_{n+1}, \gamma_{n+1}, \delta_{n+1}).
\]
Then $\left(\mathbf{s}'( \tilde{\beta}_{m}), \alpha_{n+1}, \beta_{n+1}, \gamma_{n+1}, \delta_{n+1}\right)$ is pivoted from $(\tilde{\mathbf{s}}, \alpha_{n+1}, \beta_{n+1}, \gamma_{n+1}, \delta_{n+1})$, as the former choice differs from the latter choice only at entries $(\tilde{\alpha}_{k}, \tilde{\beta}_{k}, \tilde{\gamma}_{k}$)'s for $k \in P_{n}(\mathbf{s}) \cap \{1, \ldots, l\} \subseteq P_{n+1}\left(\tilde{\mathbf{s}}, \alpha_{n+1}, \beta_{n+1}, \gamma_{n+1}, \delta_{n+1}\right)$. Lemma \ref{lem:pivotEquiv} then implies that \[
P_{n}(\mathbf{s}) \cap \{1, \ldots, l\} \subseteq P_{n+1}(\tilde{\mathbf{s}}, \alpha_{n+1}, \ldots, \delta_{n+1}) = P_{n+1}(\mathbf{s}', \alpha_{n+1}, \ldots, \delta_{n+1})
\]
and $(\tilde{\beta}_{m},  \alpha_{n+1}, \beta_{n+1}, \gamma_{n+1}, \delta_{n+1}) \in \mathcal{A}_{1}$.

Combining Observation \ref{obs:pivotLater2nd} and Inequality \ref{eqn:firstPivotEst}, we deduce \[\begin{aligned}
&\Prob\big(\mathcal{A}_{1}\, \big| \, \pivotComplete_{m} \times S^{4}\big) \\
&= \Prob\Big(  (\tilde{\beta}_{m}, \alpha_{n+1}, \beta_{n+1}, \gamma_{n+1}, \delta_{n+1}) \in \mathcal{A}_{1}\, \Big| \, \tilde{\mathbf{s}} \in \mathcal{E}_{n}'(\mathbf{s}), (\alpha_{n+1}, \beta_{n+1}, \gamma_{n+1}, \delta_{n+1}) \in S^{4}\Big) \\
& =\Prob\Big(\#P_{n+1}(\tilde{\mathbf{s}}, \alpha_{n+1}, \beta_{n+1}, \gamma_{n+1}, \delta_{n+1}) \ge \#P_{n}(\mathbf{s}) - 1 \, \Big| \, \tilde{\mathbf{s}} \in \mathcal{E}_{n}'(\mathbf{s}), (\alpha_{n+1}, \beta_{n+1}, \gamma_{n+1}, \delta_{n+1}) \in S^{4}\Big) \\
&\ge 1 - \left(\frac{4}{N_{0}}\right)^{2}.
\end{aligned}
\]

This time, we freeze the coordinates $\beta_{k}$'s except for $k=l$: for $\tilde{\mathbf{s}} \in \mathcal{E}_{n}'(\mathbf{s})$, let $E^{(l)}(\tilde{\mathbf{s}})$ be the set of $\bar{\mathbf{s}} \in \mathcal{E}_{n}'(\mathbf{s})$ such that $\bar{\beta}_{k} = \tilde{\beta}_{k}$ for $k \in P_{n}(\mathbf{s}) \setminus \{l\}$.  Then $\{E^{(l)}(\tilde{\mathbf{s}}) : \tilde{\mathbf{s}} \in \mathcal{E}_{n}'(\mathbf{s})\}$ partitions $\mathcal{E}_{n}'(\mathbf{s})$ and  $E^{(l)}(\tilde{\mathbf{s}})$ is parametrized by $\bar{\beta}_{l} \in \pivotComplete_{l}$ with the uniform measure; note that $\# \pivotComplete_{l} \ge \#S - 1$.

Fixing $\tilde{\mathbf{s}}$, now pick $(\alpha_{n+1}, \beta_{n+1}, \gamma_{n+1}, \delta_{n+1}) \in S^{4}$ and let $F^{(l)}(\tilde{\mathbf{s}})$ be the set of $\bar{\mathbf{s}} \in E^{(m)}(\tilde{\mathbf{s}})$ such that $\left(\varGam(\bar{\beta}_{l}), \bar{w}_{n+2, 2}^{-} o \right)$ $K_{0}$-aligned, i.e., \begin{equation}\label{eqn:addCondPivot2}\begin{aligned}
&\diam\left(\pi_{\Gamma^{-1}(\bar{\beta}_{l})}((\tilde{w}_{l, 0}^{-})^{-1} \tilde{w}_{n, 2}^{-} a_{n+1} b_{n+1} v_{n+1} c_{n+1}d_{n+1} w_{n+1} o) \cup o\right)  < K_{0}.
\end{aligned}
\end{equation}
This amounts to requiring another Schottky condition to $\bar{\beta}_{l}$; there are at least $\#S - 2$ choices that additionally satisfy this.

We now claim that $\#P_{n+1}(\bar{\mathbf{s}}, \alpha_{n+1}, \beta_{n+1}, \gamma_{n+1}, \delta_{n+1}) \ge \# P_{n}(\mathbf{s}) - 2$ for $\bar{\mathbf{s}} \in F^{(l)}(\tilde{\mathbf{s}})$. First, since $l' < l$ are consecutive elements in $P_{n}(\bar{\mathbf{s}})$, Lemma \ref{lem:intermediate} asserts that\[
\left(\varGam(\bar{\delta}_{l'}),  \varGam(\bar{\alpha}_{l}) \right)
\]
is $D_{0}$-semi-aligned. Moreover, Lemma \ref{lem:pivotJtn} and Condition \ref{eqn:addCondPivot2} imply that \[
\left( \varGam(\bar{\alpha}_{l}), \varGam(\bar{\beta}_{l}) \right), \quad \left(\varGam(\bar{\beta}_{l}),  \bar{w}_{n+2, 2}^{-}o\right)
\] is $D_{0}$-aligned and $K_{0}$-aligned, respectively. Combining these, we observe that the pair $(l', l)$ qualifies Criterion (B) at step $n+1$. This implies $P_{n+1}(\bar{\mathbf{s}}, \alpha_{n+1}, \beta_{n+1}, \gamma_{n+1}, \delta_{n+1}) \supseteq P_{n}(\bar{\mathbf{s}}) \cap \{1, \ldots, l'\}$, hence the claim.

As a result, for each $\tilde{\mathbf{s}} \in \mathcal{E}_{n}(\mathbf{s})$ and $(\alpha_{n+1}, \ldots, \delta_{n+1}) \in S^{4}$ we have \[\begin{aligned}
&\Prob\Big(\# P_{n+1}(\bar{\mathbf{s}}, \alpha_{n+1}, \beta_{n+1}, \gamma_{n+1}, \delta_{n+1}) < \# P_{n}(\mathbf{s}) - 2 \, \Big| \,  \bar{\mathbf{s}}  \in E^{(l)}(\tilde{\mathbf{s}})\Big) \\
&\le \frac{\# \left[ E^{(l)}(\tilde{\mathbf{s}}) \setminus F^{(l)}(\tilde{\mathbf{s}})\right]}{\# E^{(l)}(\tilde{\mathbf{s}})}\le \frac{2}{\# S - 1}  \le \frac{3}{N_{0}}.
\end{aligned}
\]
Moreover, Observation \ref{obs:pivotLater2nd} asserts that the above probability vanishes for those equivalence classes $E^{(l)}(\tilde{\mathbf{s}})$ such that $(\tilde{\beta}_{m}, \alpha_{n+1}, \ldots, \delta_{n+1}) \in \mathcal{A}_{1}$. Since $\Prob[\mathcal{A}_{1} | \pivotComplete_{m} \times S^{4}] \le (4/N_{0})^{2}$, we conclude \begin{equation}\label{eqn:2ndPivotEst}\begin{aligned}
&\Prob\Big(\# P_{n+1}(\tilde{\mathbf{s}}, \alpha_{n+1}, \beta_{n+1}, \gamma_{n+1}, \delta_{n+1}) < \# P_{n}(\mathbf{s}) - 2 \, \Big| \, \tilde{\mathbf{s}} \in \mathcal{E}_{n}'(\mathbf{s}), (\alpha_{n+1}, \beta_{n+1}, \gamma_{n+1}, \delta_{n+1}) \in S^{4}\Big) \\
&\le \left(\frac{4}{N_{0}}\right)^{2} \times \frac{4}{N_{0}} \le \left(\frac{4}{N_{0}}\right)^{3}.
\end{aligned}
\end{equation}
We repeat this procedure for $j< \# P_{n}(\mathbf{s})$. The case $j \ge \# P_{n}(\mathbf{s})$ is void.
\end{proof}

\begin{cor}\label{cor:pivotDistbn}
Let us fix $\mathbf{w}$ and $\mathbf{v}$. When $\mathbf{s} = (\alpha_{i}, \beta_{i}, \gamma_{i}, \delta_{i})_{i=1}^{n}$ is chosen from $S^{4n}$ with the uniform measure, $\# P_{n}(\mathbf{s})$ is greater in distribution than the sum of $n$ i.i.d. $X_{i}$, whose distribution is given by \begin{equation}\label{eqn:expRV}
\Prob(X_{i}=j) = \left\{\begin{array}{cc} (N_{0} - 4)/N_{0} & \textrm{if}\,\, j=1,\\ (N_{0} - 4)4^{-j}/N_{0}^{-j+1}& \textrm{if}\,\, j < 0, \\ 0 & \textrm{otherwise.}\end{array}\right.
\end{equation}

Moreover, we have $\Prob(\#P_{n}(\mathbf{s}) \le (1-10/N_{0}) n) \le e^{-n/K}$ for some $K>0$.
\end{cor}

\begin{proof}
Let $\{X_{i}\}_{i}$ be the family of i.i.d. as in Equation \ref{eqn:expRV} that is also assumed to be independent from the choice $\mathbf{s}$. Lemma \ref{lem:0thCasePivot} and Proposition \ref{prop:pivotCondition} together imply the following for each $0\le k <n$:
\begin{equation}\label{eqn:probDistbn}
\Prob\left( \# P_{k+1}(\mathbf{s})\ge i + j \, \Big| \, \#P_{k}(\mathbf{s}) = i\right) \ge  \left\{\begin{array}{cc} 1 - \frac{4}{N_{0}} & \textrm{if}\,\, j=1,\\ 1 - \left(\frac{4}{N_{0}}\right)^{-j+1} & \textrm{if}\,\, j < 0.\end{array}\right. \quad (i=0, 1, 2, \ldots)
\end{equation}
Hence, there exists a nonnegative random variable $U_{k}$ such that $\# P_{k+1} - U_{k}$ and $\# P_{k} + X_{k+1}$ have the same distribution.

For each $1 \le k \le n$, we claim that $\Prob(\#P_{k} \ge i) \ge \Prob (X_{1} + \cdots + X_{k} \ge i)$ for each $i$.  For $k=1$, we have $\#P_{k-1}= 0$ and the claim follows from Inequality \ref{eqn:probDistbn}. Given the claim for $k$, we have \[\begin{aligned}
\Prob(\#P_{k+1}\ge i) &\ge \Prob(\#P_{k} + X_{k+1} \ge i) = \sum_{j} \Prob(\#P_{k}\ge j) \Prob(X_{k+1} = i - j)\\
 & \ge \sum_{j} \Prob(X_{1} + \cdots + X_{k} \ge j) \Prob(X_{k+1} = i-j) \\
 &= \Prob(X_{1} + \cdots + X_{k} + X_{k+1} \ge i).
 \end{aligned}
\]

The second claim holds since $X_{i}$'s have finite exponential moment and $\E[X_{i}] \ge 1-9/N_{0}$. 
\end{proof}

We now describe a simpler situation when $v_{i} = id$ for all $i$, i.e., we study \[
w_{0} a_{1}b_{1}c_{1}d_{1} w_{1} \cdots a_{n}b_{n} c_{n}d_{n} w_{n} \cdots.
\]
Before defining the pivoting, note that the conditions \[
\left(\bar{\Gamma}^{-} (\beta), \prodSeq(\gamma) o\right), \,\, \left(o, \Gamma^{+} (\gamma) \right) \,\,\textrm{are $K_{0}$-aligned}
\]
are satisfied by every pair of Schottky sequences $(\beta, \gamma) \in S^{2}$, as we proved in Lemma \ref{lem:pivotJtn}. Hence, Criterion (A) while defining the set of pivotal times is simplified as follows:  
\begin{enumerate}[label=(\Alph*')]
\item When $\left(z_{n-1}, \varGam(\alpha_{n})\right)$ and $\left(\varGam(\delta_{n}), w_{n+1, 2}^{-}o\right)$ are $K_{0}$-aligned, we set $P_{n} = P_{n-1} \cup \{n\}$ and $z_{n} = w_{n, 1}^{+}o$ (see Figure \ref{fig:0thCasePivot}).
\end{enumerate}
Moreover, $\pivotComplete$ contains all of $\{(\beta, \gamma, id) : \beta, \gamma \in S\}$. Hence, when $\mathbf{w}$ and $\mathbf{v}= (id)_{i=1}^{\infty}$ are fixed, the previous definition reads as follows: given a choice $\mathbf{s} = (\alpha_{1}, \beta_{1}, \ldots, \gamma_{n}, \delta_{n})$ in $S^{4n}$, we say that $\bar{\mathbf{s}} \in S^{4n}$ is \emph{pivoted from} $\mathbf{s}$ if: \begin{itemize}
\item $\alpha_{i} = \bar{\alpha}_{i}$, $\delta_{i} = \bar{\delta}_{i}$ for all $i \in \{1, \ldots, n\}$;
\item $(\beta_{i}, \gamma_{i})= (\bar{\beta}_{i}, \bar{\gamma}_{i})$ for each $i \in \{1, \ldots, n\} \setminus  P_{n}(\mathbf{s})$.
\end{itemize}
Therefore, for $\mathbf{s} \in S^{4n}$, $\bar{\mathbf{s}} \in \mathcal{E}_{n}(\mathbf{s})$ is parametrized by their choices $(\bar{\beta}_{i}, \bar{\gamma}_{i})_{i \in P_{n}(\mathbf{s})}$ distributed according to the uniform measure on $S^{2\#P_{n}(\mathbf{s})}$.

\subsection{Pivotal times in random walks} \label{subsection:pivotRW}

In this subsection, we define pivotal times for random walks and prove Proposition \ref{prop:gouezelRW1}. Let $\mu$ be a non-elementary probability measure on $G$ and $S \subseteq (\supp \mu)^{M_{0}}$ be a large enough $K_{0}$-Schottky set with cardinality $N_{0} \ge 400$. We also fix an integer $n\ge0$.

Let $\mu_{S}$ be the uniform measure on $S$. By taking suitably small $\alpha$, we can decompose $\mu^{4M_{0}}$ as \[
\mu^{4M_{0}} = \alpha \mu_{S}^{4} + (1-\alpha) \nu 
\]
for some probability measure $\nu$. We then consider Bernoulli RVs $(\rho_{i})_{i}$ with $\Prob(\rho_{i} = 1) = \alpha$ and $\Prob(\rho_{i} = 0) = 1-\alpha$, $(\eta_{i})_{i}$ with the law $\mu_{S}^{4}$ and $(\nu_{i})_{i}$ with the law $\nu$, all independent, and define \[
(g_{4M_{0}k+1}, \,\ldots, \,g_{4M_{0}k + 4M_{0}}) = \left\{\begin{array}{cc} \nu_{k} & \textrm{when}\,\, \rho_{k} = 0, \\ \eta_{k} & \textrm{when} \,\, \rho_{k} = 1. \end{array}\right.
\]
Then $(g_{i})_{i =1}^{\infty}$ has the law $\mu^{\infty}$. Since we need to prove Proposition \ref{prop:gouezelRW1} by fixing the choice of $g_{1}, \ldots, g_{\lfloor n/2\rfloor + 1}$ and $g_{n+1}$, we slightly modify $\rho_{i}$'s, namely, \[
\rho_{i}^{(n)} := \left\{\begin{array}{cc} 0 & \textrm{if $i \le n/8M_{0}$ or $i = \lfloor n/4M_{0} \rfloor$}, \\ \rho_{i} & \textrm{otherwise}.\end{array}\right.
\]
Let $\Omega$ be the ambient probability space on which the above RVs are all measurable. We denote by  $\sumRho(k) := \sum_{i=0}^{k} \rho_{i}^{(n)}$ the number of the Schottky slots till $k$ and by $\stopping(i) := \min \{j \ge 0 : \sumRho(j) = i\}$ the $i$-th Schottky slot. We also set $\vartheta(0) = -1$. Note that $\{j \ge 0 : \rho_{j}^{(n)} = 1\}= \{\vartheta(1) < \vartheta(2) < \ldots\}$.

For each $\w \in \Omega$ and $i \ge 1$ we define \[\begin{aligned}
w_{i-1} &:= g_{4M_{0}[\stopping(i-1) + 1] + 1} \cdots g_{4M_{0} \stopping(i)},\\
\alpha_{i} &:= (g_{4M_{0} \stopping(i) + 1}, \,\ldots, \,g_{4M_{0} \stopping(i) +M_{0}} ),\\
\beta_{i} &:= (g_{4M_{0} \stopping(i) + M_{0}+1}, \,\ldots, \,g_{4M_{0} \stopping(i) +2M_{0}} ),\\
\gamma_{i} &:= (g_{4M_{0} \stopping(i) + 2M_{0}+1}, \,\ldots, \,g_{4M_{0} \stopping(i) +3M_{0}} ),\\
\delta_{i} &:= (g_{4M_{0} \stopping(i) + 3M_{0}+1}, \,\ldots, \,g_{4M_{0} \stopping(i) +4M_{0}} ).
\end{aligned}
\]
In other words, $\eta_{\stopping(i)}$ becomes $(\alpha_{i}, \beta_{i}, \gamma_{i}, \delta_{i})$ (with $M_{0}$ steps each) and $w_{i}$ is the product of intermediate steps between $\eta_{\stopping(i-1)}$ and $\eta_{\stopping(i)}$. As in Subsection \ref{subsection:pivotDiscrete}, we write $a_{i} := \prodSeq(\alpha_{i})$, $b_{i} := \prodSeq(\beta_{i})$ and so on. We then have \begin{equation}\label{eqn:wnRepresent}
\w_{4M_{0} \stopping(l+1)} = w_{0} a_{1}b_{1}v_{1}c_{1}d_{1} w_{1} \cdots a_{l} b_{l} v_{l}c_{l} d_{l} w_{l}
\end{equation}
for each $l>0$. Following the discussion in Subsection \ref{subsection:pivotDiscrete}, we define \begin{equation}\label{eqn:pivotalTimeModel1}\begin{aligned}
P_{1}(\w) &= P_{1}\left((a_{1}, b_{1}, c_{1}, d_{1});(w_{i})_{i=0}^{1}, (id)_{i=1}^{1}\right), \\
P_{2}(\w) &= P_{2}\left((a_{i}, b_{i}, c_{i}, d_{i})_{i=1}^{2}; (w_{i})_{i=0}^{2},(id)_{i=1}^{2}\right), \\
&\vdots \\
\end{aligned}
\end{equation}
We finally define \[
\diffPivot(\w) := \left\{4M_{0} \stopping(i) + 2M_{0} : i \in \liminf_{k} P_{k}(\w)\right\}.
\]

Recall that $P_{k}$ is formed from $P_{k-1}$ by adjoining a new element $k$ or taking an initial section of $P_{k-1}$. Hence, any initial section $\{i(1) < \ldots < i(N)\}$ of $\liminf_{k} P_{k}(\w)$ is an initial section of some $P_{m}(\w)$ (in fact, for all sufficiently large $m$). Proposition \ref{prop:extremal} then tells us that: 
\begin{obs}\label{obs:pivotRandom1}
Let $\diffPivot(\w) = \{i(1) < i(2) < \ldots\}$. Then
\[
\left(o, \varGam(\alpha_{i(1)}), \varGam(\beta_{i(1)}), \varGam(\gamma_{i(1)}), \varGam(\delta_{i(1)}), \ldots, \varGam(\alpha_{i(k)}), \varGam(\beta_{i(k)}), \varGam(\gamma_{i(k)}), \varGam(\delta_{i(k)}), \ldots\right)
\]
is $D_{0}$-semi-aligned.
\end{obs}

Note that $(w_{i})_{i}$'s and $(\alpha_{i}, \beta_{i}, \gamma_{i}, \delta_{i})_{i >0}$ are independent, the latter being i.i.d. with the uniform distribution on $S^{4}$. By Corollary \ref{cor:pivotDistbn}, $P_{k}$ linearly increases: 

\begin{obs}\label{obs:pivotRandom2}There exists $K>1$ such that
\begin{equation}\label{eqn:pivotInnocentGrowth}
\Prob\Big(\# P_{k}(\w) \le k/K  \,\Big| \,w_{0}, w_{1}, \ldots \Big) \le K e^{-k/K}
\end{equation}
for every $k>0$ and every choice of $(w_{i})_{i}$.
\end{obs}
Here, the growth rate is independent of $g_{i}$'s that are not involved in $(\alpha_{i}, \beta_{i}, \gamma_{i}, \delta_{i})$'s. In particular, it is independent of $g_{1}, \ldots, g_{\lfloor n/2 \rfloor + 1}$ and $g_{n+1}$.

To couple the words $w_{k, 2}^{-}$'s and the actual random walk $Z_{n}$'s, we need to control $\stopping(i)$'s. For each $k \ge \lfloor n/4M_{0}\rfloor$ and $L>0$, we have \[
\begin{aligned}
\Prob( \sumRho(k) \le L) \cdot e^{-L} &\le \E[e^{-\sumRho(k)}]  = \prod_{i=1}^{k} \E[ \operatorname{exp}(-\rho_{i}^{(n)})] & (\because \textrm{Markov's inequality})\\
&= \prod_{n/8M_{0} < i \le k, i \neq \lfloor n/4M_{0} \rfloor} \E[ \operatorname{exp}(-\rho_{i}^{(n)})]  \\
&= \left( 1 - \alpha(1-e^{-\alpha}) \right)^{k - \lceil n/8M_{0} \rceil - 1}\\
&\le (1- \alpha^{2}/2)^{n/3M_{0}-4}. & (\because \textrm{$e^{-\alpha} \le \alpha/2$ for $0 \le \alpha \le 1$})
\end{aligned}
\]
By plugging in $L = \frac{\log(1+\alpha^{2}/2)}{3M_{0}} k$, we obtain \begin{equation}\label{eqn:gouezelSumRho1}
\Prob\left(\sumRho(k) \le k/K' \right) \le K' e^{-k/K'} \quad(k \ge n/4M_{0})
\end{equation}
for some $K' =K'(\alpha, M_{0})>1$ (independent of $\alpha$). 

Let us now combine the ingredients and prove Proposition \ref{prop:gouezelRW1}. Given the measure $\mu$, integers $k \ge n$, and the choices of $g_{1}, \ldots, g_{\lfloor n/2 \rfloor + 1}$ and $g_{n+1}$, we do the above construction. Then we have \[
\Prob\left( A_{k} := \big\{\sumRho(\lfloor k/4M_{0} \rfloor) \le \lfloor k/4K' M_{0}\rfloor \big\}\right) \le K' e^{-\lfloor k/4K'M_{0}\rfloor}.
\]
Let us fix a combination of the values of $\{\rho_{i}^{(n)} : i > 0\}$ in $A_{k}^{c}$, which determines $\vartheta(i)$'s. Furthermore, we fix a combination of the values of $\{\eta_{i} : i>0\}$. These choices determine
 \begin{equation}\label{eqn:condition1Model}
\big\{g_{j} : j \notin \cup_{i> 0} \{4M_{0} \vartheta(i)+1, \ldots, 4M_{0} \vartheta(i)+4M_{0} \} \big\}.
\end{equation}
and consequently $w_{i}$'s. Note also that $\{(\alpha_{i}, \beta_{i}, \gamma_{i}, \delta_{i}) = \eta_{\vartheta(i)} : i>0\}$ are i.i.d.s distributed according to $\mu_{S}^{4}$. Hence, conditioned on choices of $\{\rho_{i}^{(n)} : i > 0\} \in A_{k}^{c}$ and  $\{\eta_{i} : i>0\}$, we are now reduced to the combinatorial model. From Observation \ref{obs:pivotRandom2}, we deduce that \[\begin{aligned}
&\Prob\Big( \# P_{l}(\w) \ge \lfloor k/4KK'M_{0}\rfloor \,\textrm{ for all $l \ge \lfloor k/4K'M_{0} \rfloor$}\, \Big| \, w_{0}, w_{1}, \ldots \Big)\\
 &\ge 1 - K \sum_{l \ge \lfloor k / 4K'M_{0} \rfloor } e^{-l/K} \ge 1 - \frac{K}{1-e^{-1/K}} e^{-\lfloor k/4KK'M_{0} \rfloor}.
 \end{aligned}
\]
In other words, except for probability $\frac{K}{1-e^{-1/K}} e^{-\lfloor k/4KK'M_{0} \rfloor}$ (under the conditioning), the initial $\lfloor k/4KK'M_{0} \rfloor$-sections of $P_{\lfloor k/4K'M_{0} \rfloor}(\w)$ remains the same in $P_{l}(\w)$ for $l \ge \lfloor k/4M_{0} \rfloor$. Hence, it becomes an initial section of $\liminf_{l} P_{l}(\w)$. This means that \begin{equation}\label{eqn:whatObs}
\#\Big( \mathcal{P}(\w) \cap \{ 4M_{0} \vartheta(i) + 2M_{0} : i \in P_{\lfloor k/4K'M_{0} \rfloor}(\w) \} \Big) \ge \lfloor k/4KK'M_{0} \rfloor.
\end{equation}
Meanwhile, since $\{\rho_{i}^{(n)} : i > 0\}$ is determined in $A_{k}^{c}$, we have $\sumRho(\lfloor k/4M_{0} \rfloor) > \lfloor k/4K'M_{0}\rfloor$ and \[
P_{\lfloor k/4K' M_{0} \rfloor}(\w) \subseteq \{\vartheta(1), \ldots, \vartheta(\lfloor k/4K'M_{0} \rfloor)\} \subseteq \{1, \ldots,  \lfloor k/4M_{0} \rfloor -1\}.
\]
This implies $\{4M_{0} \vartheta(i) + 2M_{0} : i\in P_{\lfloor k/4K'M_{0} \rfloor}(\w) \}\subseteq \{1, \ldots, k - 2M_{0}\}$. Combines with Display \ref{eqn:whatObs}, this implies  \begin{equation}\label{eqn:whatObs2}
\#\Big( \mathcal{P}(\w) \cap \{ 1, \ldots, k\} \Big) \ge \lfloor k/4KK'M_{0} \rfloor.
\end{equation}
Summing up the conditional probabilities, we have \[
\Prob\Big(\#\big( \mathcal{P}(\w) \cap \{ 1, \ldots, k\} \big) \ge \lfloor k/4KK'M_{0} \rfloor \, \Big| A_{k}^{c} \Big) \ge 1 - \frac{K}{1-e^{-1/K}} e^{-\lfloor k/4KK'M_{0} \rfloor}.
\]
Since $\Prob(A_{k})$ decays exponentially, we conclude Inequality \ref{eqn:gouezelRWDecay}.

It remains to partition the probability space $\Omega$ into pivotal equivalence classes that satisfy Definition \ref{dfn:pivotalEquiv}, with $\mathcal{P}(\w)$ as the set of pivotal times.  We say that $\bar{\w} \in \Omega$ is pivoted from $\w$ if they only differ in the value of $\beta_{i}$'s for $i \in \liminf_{l} P_{l}(\w)$. Then being pivoted from each other is an equivalence relation. On an equivalence class $\mathcal{E}$, all random paths have the same set of pivotal times $\diffPivot(\mathcal{E}) = \{j(1) < j(2) < \ldots \} \subseteq M_{0}\Z$ that avoids $1, \ldots, \lfloor n/2 \rfloor$ and $n$. Moreover, the steps $g_{i}$'s are uniform across $\mathcal{E}$ except for \[
s_{k} := \big(g_{j(k)-M_{0}+1},\, g_{j(k) -M_{0}+ 2},\,\ldots, \,g_{j(k)}\big) \quad (k = 1, 2, \ldots),
\] which are i.i.d.s chosen from $S$ according to $\mu_{S}$. Lastly, observe that \[\begin{aligned}
\varGam(\beta_{i(k)}) &= (Z_{(4M_{0} + M') \stopping(i(k)) + M_{0}} o, \ldots, Z_{(4M_{0} + M') \stopping (i(k)) +  2M_{0}} o) \\
&= \big(Z_{j(k) - M_{0}} o, \ldots, Z_{j(k)} o\big) = \axes_{j(k)}.
\end{aligned}
\]
By Observation \ref{obs:pivotRandom1}, $(o, \mathbf{Y}_{j(1)}, \mathbf{Y}_{j(2)}, \ldots)$ is always $D_{0}$-semi-aligned. Proposition \ref{prop:gouezelRW1} is now proved.

\section{Large deviation principles} \label{section:LDP}

In this section, we consider a more delicate pivoting that leads to the large deviation principle. Definition \ref{dfn:pivotalEquivLDP} and Proposition \ref{prop:gouezelRWLDP} rephrases Gou{\"e}zel's result in \cite[Section 5A]{gouezel2022exponential} in terms of strongly contracting isometries.

\begin{definition}\label{dfn:pivotalEquivLDP}
Let $\mu$ and $\nu$ be non-elementary probability measures on $G$ and $(\Omega, \Prob)$ be a probability space for $\mu$. Let $0<\epsilon<1$, let $K_{0}, N > 0$ and let $S \subseteq (\supp \mu)^{M_{0}}$ be a long enough and large $K_{0}$-Schottky set for $\mu$.

A subset $\mathcal{E}$ of $\Omega$ is called an \emph{$(n, N, \epsilon, \nu)$-pivotal equivalence class for $\mu$}, associated with the \emph{set of pivotal times} \[
\diffPivot^{(n, N, \epsilon, \nu)}(\mathcal{E}) = \big\{j(1) < j'(1) <  \ldots < j(\#\diffPivot/2) < j'(\#\diffPivot/2)\big\} \subseteq M_{0} \Z_{>0},
\]  if the following hold: \begin{enumerate}
\item for each $\w \in \mathcal{E}$ and $k \ge 1$, \[\begin{aligned}
s_{k}(\w) &:= \big(g_{j(k) - M_{0} + 1}(\w), \,\,g_{j(k) - M_{0} + 2}(\w), \,\, \ldots, \,\, g_{j(k)}(\w) \big),\\
s_{k}'(\w) &:=  \big(g_{j'(k) - M_{0} + 1}(\w), \,\,g_{j'(k) - M_{0} + 2}(\w), \,\, \ldots, \,\, g_{j'(k)}(\w) \big)
\end{aligned}
\]
are Schottky sequences;
\item for each $\w \in \mathcal{E}$, $\big(o, \axes_{j(1)}, \axes_{j'(1)}, \ldots, \axes_{j(\#\diffPivot/2)}, \axes_{j'(\#\diffPivot/2)}, Z_{n} o\big)$ is $D_{0}$-semi-aligned;
\item for the RV defined as \[
r_{k} := g_{j(k) + 1} g_{j(k) + 2} \cdots g_{j'(k) - M_{0}},
\]
$(s_{k}, s_{k}', r_{k})_{k > 0}$ on $\mathcal{E}$ are i.i.d.s and $r_{k}$'s are distributed almost according to $\mu^{\ast2 M_{0} N} \ast \nu^{\ast \frac{j'(k) - j(k)}{2M_{0}} - N-0.5}$ in the sense that the following holds for every $g \in G$:\[
 (1-\epsilon) \big(\mu^{\ast 2M_{0} N} \ast \nu^{\ast \frac{j'(k) - j(k)}{2M_{0}} - N-0.5}\big)(g) \le  \Prob(r_{k} = g) \le (1+\epsilon) \big(\mu^{\ast 2M_{0} N} \ast \nu^{\ast\frac{ j'(k) - j(k)}{2M_{0}} -N-0.5}\big)(g)
\]
for each $g \in G$.
\end{enumerate}
\end{definition}

\begin{prop}\label{prop:gouezelRWLDP}
Let $M_{0}> 0$, $\mu$ be a non-elementary probability measure on $G$, let $0<\epsilon < 1$ and let $S \subseteq (\supp \mu)^{M_{0}}$ be a long enough and large Schottky set for $\mu$ with cardinality greater than $100/\epsilon$. Then there exists a non-elementary probability measure $\nu$ on $G$ such that the following holds.

For each sufficiently large integer $N$, there exists $K>0$ such that for each $n$ we have a probability space $(\Omega, \Prob)$ for $\mu$ and its measurable partition $\mathscr{P}_{n, N, \epsilon, \nu} = \{\mathcal{E}_{\alpha}\}_{\alpha}$ into $(n, N, \epsilon, \nu)$-pivotal equivalence classes that satisfies\begin{equation}\label{eqn:gouezelRWLDP}
\Prob\left( \w :\frac{1}{2}\# \diffPivot^{(n, N, \epsilon, \nu)}(\w) \le (1-\epsilon) \frac{n}{2M_{0} N} \right) \le K e^{-n/K}.
\end{equation}
\end{prop}

We will in fact prove a statement that is more explicit than Proposition \ref{prop:gouezelRWLDP}:

\begin{prop}\label{prop:gouezelRWLDPAdv}
Let $0<\epsilon < 1$, let  $K_{0}, M_{0} > 0$ and let $S \subseteq G^{M_{0}}$ be a long enough and large $K_{0}$-Schottky set with $\#S \ge 100/\epsilon$. Let $\mu$ be a probability measure on $G$ such that $m := \min \{ \mu^{M_{0}}(s) : s \in S\}$ is positive. Let $N > 40/m^{2} \epsilon$ and let $\nu$ be the measure defined by \[
\nu = \frac{1}{1-0.5m^{2}} \big( \mu^{\ast 2M_{0}} - 0.5m^{2} \cdot (\textrm{uniform measure on $\{\Pi(s) \Pi(s') : s, s' \in S\}$})\big).
\]
Then $\nu$ is a non-elementary probability measure. Moreover, there exists $K >0$ depending only on $S$, $m, N$ and $\epsilon$ (but not on $\mu$) such that, for each $n$, we have a probability space $(\Omega, \Prob)$ for $\mu$ and its measurable partition $\mathscr{P}_{n, N, \epsilon, \nu} = \{\mathcal{E}_{\alpha}\}_{\alpha}$ into $(n, N, \epsilon, \nu)$-pivotal equivalence classes, associated with the set of pivotal times $\mathcal{P}^{(n, N, \epsilon, \nu)}$, that satisfies\[
\Prob\left( \w :\frac{1}{2}\# \diffPivot^{(n, N, \epsilon, \nu)}(\w) \le (1-\epsilon) \frac{n}{2M_{0} N} \right) \le K e^{-n/K} \quad (\forall n \in \Z_{>0}).
\]
\end{prop}

Gou{\"e}zel proved Proposition \ref{prop:gouezelRWLDP} for random walks on a Gromov hyperbolic space in \cite[Section 5C]{gouezel2022exponential}. We adapt his proof to our setting here.

\begin{proof}
Let us denote the uniform measure on $S$ by $\mu_{S}$. In this proof, when a probability measure $\tau$ on $G^{k}$ is given, we denote by $\tau^{\ast}$ the pushforward measure by convolution: \[
\tau^{\ast}(g) := \sum_{(g_{1}, \ldots, g_{k}) \in G^{k}, \,\,g_{1} \cdots g_{k} = g} \tau(g_{1}, \ldots, g_{k}).
\]

Let $N_{0} = \#S$ be the cardinality of $S$. Note that $10/N_{0} \le \epsilon/10$. Consider the decomposition \begin{equation}\label{eqn:3rdDecomp}
\mu^{2M_{0}} = 0.5 m^{2} \mu_{S}^{2} + (1- 0.5m^{2}) \tau,
\end{equation}
where $\tau$ is a probability measure on $G^{2M_{0}}$ with $\tau^{\ast} = \nu$. Recall that $S$ is a long enough and large $K_{0}$-Schottky set, so there exists $a, b \in S$ such that $\Pi(a)$ and $\Pi(b)$ are independent strongly contracting isometries. Since $\tau$ has the same support with $\mu^{2M_{0}}$, $\nu$ puts nonzero weights on $a^{2}$ and $b^{2}$. Hence $\nu$ is non-elementary.

Given the decomposition as in Equation \ref{eqn:3rdDecomp}, we consider Bernoulli RVs $(\rho_{i})_{i\ge0}$ with $\Prob(\rho_{i} = 1) = 0.5m^{2}$ and $\Prob(\rho_{i} = 0) = 1-0.5m^{2}$, $(\eta_{i})_{i\ge0}$ with the law of $\mu_{S}^{2}$, $(\tau_{i})_{i}$ with the law of $\tau$ and $(\xi_{i})_{i\ge 0}$ with the law of $\mu^{2M_{0}}$, all independent. We define RVs $\{t_{j}, t_{j}'\}_{j =1}^{\infty}$. First, $t_{1}$ is the smallest $i>0$ with $\rho_{i} = 1$, and $t_{1}' := \min \{ i > t_{1} + N : \rho_{i} = 1\}$. Inductively, we define \[
t_{k} := \min \{i > t_{k-1}' : \rho_{i} = 1\}, \quad t_{k}' := \min\{i > t_{k} + N : \rho_{i} = 1\}.
\]
For convenience, we set $t_{0}' := 0$. We then define \[
(g_{2M_{0}(k-1) + 1}, \,\ldots,\, g_{2M_{0}(k-1) + 2M_{0}}) := \left\{ \begin{array}{cc} \eta_{k} & \textrm{when}\,\, k \in \{t_{j}, t_{j}'\}_{j=1}^{\infty} \\ \xi_{k} & \textrm{when}\,\, t_{j} +1 \le k \le t_{j} + N\,\, \textrm{for some} \,\, j \\
\tau_{k} & \textrm{otherwise}.\end{array}\right.
\]
Then $(g_{i})_{i=1}^{\infty}$ is distributed according to the product measure $\mu^{\infty}$ \cite[Claim 5.11]{gouezel2022exponential}. 
We let  $\sumRho(k) :=\#\{j \ge 1 : t_{j}' < k\}$. Now define \[\begin{aligned}
w_{i-1} &:= g_{2M_{0}t_{i-1}' + 1} \cdots g_{2M_{0}(t_{i}- 1)},\\
\alpha_{i} &:= (g_{2M_{0} t_{i} - 2M_{0} + 1}, \,\ldots, \,g_{2M_{0} t_{i}-M_{0}} ),\\
\beta_{i} &:= (g_{2M_{0} t_{i} - M_{0}+1}, \,\ldots, \,g_{2M_{0} t_{i}} ),\\
v_{i} &:= g_{2M_{0} t_{i} +1} \cdots g_{2M_{0} t_{i}' - 2M_{0}},\\
\gamma_{i} &:= (g_{2M_{0} t_{i}'  - 2M_{0}+1},\, \ldots,\, g_{2M_{0}t_{i}' -M_{0}}),\\
\delta_{i} &:= (g_{2M_{0} t_{i}'- M_{0}+1}, \,\ldots, \,g_{2M_{0}t_{i}' } )
\end{aligned}
\]
for $i=1, \ldots, \sumRho(\lfloor n/2M_{0} \rfloor)$ and define $w_{\sumRho(\lfloor n/2M_{0} \rfloor)} = g_{2M_{0}t_{\sumRho(\lfloor n/2M_{0} \rfloor)}' + 1} \cdots g_{n}$. Using these data, we define the set of pivotal times \[
P_{\sumRho(\lfloor n/2M_{0} \rfloor)}(\w) = P_{\sumRho(\lfloor n/2M_{0} \rfloor)}\left((\alpha_{i}, \beta_{i}, \gamma_{i}, \delta_{i})_{i=1}^{\sumRho(\lfloor n/2M_{0} \rfloor)}; (w_{i})_{i=0}^{\sumRho(\lfloor n/2M_{0} \rfloor)}, (v_{i})_{i=1}^{\sumRho(\lfloor n/2M_{0} \rfloor)}\right)
\]
as in Subsection \ref{subsection:pivotDiscrete}.

We first determine the values of $\rho_{j}$'s. Observe that $\sumRho(\lfloor n/2M_{0} \rfloor)$ and $\{t_{j}, t_{j}'\}_{j}$ depend solely on $\{\rho_{j}\}_{j}$ and counts the renewal times in $[0, n/2M_{0}]$ formed with a geometric distribution after a delay $N$. More explicitly, if we `omit' $\rho_{t_{k} + i}$'s for $k > 0$ and $i = 1, \ldots, N$ and define\[
(\rho_{1}', \rho_{2}', \rho_{3}', \ldots) := ( \rho_{1}, \ldots, \rho_{t_{1}}, \rho_{t_{1} + N + 1}, \rho_{t_{1} + N+ 2}, \ldots, \rho_{t_{2}}, \rho_{t_{2} + N+1}, \ldots),
\]
then $\{\rho_{i}'\}_{i}$ are i.i.d. Bernoulli RVs and $t_{k}' = kN + \min\{ j : \sum_{i=1}^{j} \rho_{i}' = 2k\}$. Hence, we have \[\begin{aligned}
&\Prob \left(\sumRho(\lfloor n/2M_{0} \rfloor) < (1-\epsilon/10) \frac{n}{2M_{0} N} \right)= \Prob \left(t_{ \lceil(1-\epsilon/10) \frac{n}{2M_{0} N}\rceil }' >\lfloor n/2M_{0}\rfloor \right) \\
&= \Prob \left( \left\lceil(1-\epsilon/10) \frac{n}{2M_{0} } \right\rceil+ \min\left\{ j : \sum_{i=1}^{j} \rho_{i}' = \left\lceil(1-\epsilon/10) \frac{n}{M_{0} N}\right\rceil\right\} \ge\left\lfloor \frac{n}{2M_{0}}\right\rfloor\right) \\
&\le \Prob\left(\sum_{i=1}^{\lceil\epsilon n/20M_{0}\rceil + 3} \rho_{i}' < (1-\epsilon/10) \frac{n}{M_{0} N}\right),
\end{aligned}
\]
which decays exponentially because  $\E[\rho_{i}'] = 0.5 m^{2}> 20/ \epsilon N$. Hence, there exists $K_{1}>0$ that depends on $m, \epsilon$ and $N$ such that:
 \begin{equation}\label{eqn:3rdModel1st}
\Prob\left(\sumRho(\lfloor n/2M_{0} \rfloor)< \left(1-\epsilon / 10 \right) \frac{n}{2M_{0}N} \right) \le K_{1}e^{-n/K_{1}}
\end{equation} 
Let us fix the choices of $(\rho_{i})_{i\ge 0}$. This determine $(t_{i}, t_{i}')_{i>0}$ and $\sumRho(\lfloor n/2M_{0} \rfloor)$. We then fix the data $(\tau_{i}, \xi_{i})_{i>0}$ and $\{\eta_{i}: i > t_{\sumRho(\lfloor n/2M_{0} \rfloor)}'\}$. These in turn determine   $(w_{i})_{i=0}^{\sumRho(\lfloor n/2M_{0} \rfloor)}$ and $(v_{i})_{i=1}^{\sumRho(\lfloor n/2M_{0} \rfloor)}$. Furthermore, \[
(\alpha_{i}, \beta_{i})_{i=1}^{\sumRho(\lfloor n/2M_{0} \rfloor)} = (\eta_{t_{i}})_{i=1}^{\sumRho(\lfloor n/2M_{0} \rfloor)}, \quad (\gamma_{i}, \delta_{i})_{i=1}^{\sumRho(\lfloor n/2M_{0} \rfloor)} = (\eta_{t'_{i}})_{i=1}^{\sumRho(\lfloor n/2M_{0} \rfloor)}
\]
are all independent and identically distributed according to $\mu_{S}^{2}$. Hence, the situation is reduced to the combinatorial model in Section \ref{section:pivoting}. Corollary \ref{cor:pivotDistbn} asserts the following for some $K_{2} >0$: \begin{equation}\label{eqn:3rdModel2nd}
\Prob\Big(\#P_{\sumRho(\lfloor n/2M_{0} \rfloor)} \le (1-10/N_{0}) \sumRho(\lfloor n/2M_{0} \rfloor)\, \Big| \, (\rho_{i}, \tau_{i}, \xi_{i})_{i\ge 0} \Big) \le K_{2}e^{- \sumRho(\lfloor n/ 2M_{0} \rfloor)/2M_{0} K_{2}}.
\end{equation}
Combining Inequality \ref{eqn:3rdModel1st} and \ref{eqn:3rdModel2nd}, we can conclude that $\Prob\left( \# P_{\sumRho(\lfloor n/2M_{0} \rfloor)} \le (1-\epsilon) \frac{n}{2M_{0} N} \right)$ decays exponentially.

Now, given $\w \in \Omega$ with $P_{\sumRho(\lfloor n/2M_{0} \rfloor)(\w) } (\w) = \{i(1) < i(2) < \ldots\}$, we define: \[
\begin{aligned}
\mathcal{P}^{(n, N, \epsilon, \nu)}(\w) &= \{j(1) < j'(1) < j(2) < j'(2) < \ldots\} \\
&:= \Big\{2M_{0} t_{i(1)}, \,2M_{0} t_{i(1)}' - M_{0}, \,2M_{0} t_{i(2)},\, 2M_{0} t_{i(2)}' - M_{0}, \ldots\Big\}.
\end{aligned}
\]
We just established the estimate is Display \ref{eqn:gouezelRWLDP} for this $\mathcal{P}^{(n, N, \epsilon, \nu)}$. Furthermore, note that \[\begin{aligned}
\varGam(\beta_{i(k)}) &= (Z_{2M_{0} t_{i(k)} - M_{0}} o, \ldots, Z_{2M_{0} t_{i(k)}} o) = (Z_{j(k) - M_{0}} o, \ldots, Z_{j(k)} o) = \mathbf{Y}_{j(k)},\\
\varGam(\gamma_{i(k)}) &= (Z_{2M_{0} t'_{i(k)} - 2M_{0}} o, \ldots, Z_{2M_{0} t'_{i(k)} - M_{0}}o) = (Z_{j'(k) - M_{0}} o, \ldots, Z_{j'(k)} o) = \mathbf{Y}_{j'(k)},
\end{aligned}
\]
are Schottky axes, and that $w_{\sumRho(\lfloor n/2M_{0} \rfloor)+1, 2}^{-} = Z_{n}$. Proposition \ref{prop:extremal} tells us that $(o, \mathbf{Y}_{j(1)}, \mathbf{Y}_{j'(1)},$ $ \mathbf{Y}_{j(2)}, \mathbf{Y}_{j'(2)}, \ldots, Z_{n} o)$ is always $D_{0}$-semi-aligned. This settles Item (i) and (ii) in Definition \ref{dfn:pivotalEquivLDP}.

It remains to realize the partition as in Definition \ref{dfn:pivotalEquivLDP} and check Item (iii) in Definition \ref{dfn:pivotalEquivLDP}. We declare the equivalence by pivoting. More precisely, given $\w \in \Omega$ with $P_{\sumRho(\lfloor n/2M_{0} \rfloor}(\w)= \{i(1) <i(2) < \ldots\}$, we declare that another element $\w' \in \Omega$ is equivalent to $\w$ if it has the same values of $(\rho_{i})_{i \ge 0}$ (hence the same values of $(t_{i}, t_{i}')_{i> 0}$) with $\w$, and if it has the same values of $(\eta_{i}, \tau_{i}, \xi_{i})_{i \ge 0}$ with $\w$, possibly except for \[
\Big\{ \eta_{i} : i  \in \cup_{k} \big\{ t_{i(k)}, t_{i(k)}' \big\}\Big\}, \,\, \Big\{ \xi_{i} : i \in \cup_{k} [t_{i(k)} + 1, t_{i(k)} + N] \Big\}, \,\, \Big\{ \tau_{i} : i \in \cup_{k} [t_{i(k)} + N, t_{i(k+1)}' - 1] \Big\}.
\]
Further, we require that $\big(\alpha_{i(k)}(\w'), \delta_{i(k)}(\w') \big) = \big(\alpha_{i(k)}(\w), \delta_{i(k)}(\w) \big)$ for each $k$ and \[
\big(\beta_{i(l)}(\w'), \gamma_{i(l)}(\w'), v_{i(l)}(\w') \big) \in \tilde{S}\quad (l = 1, \ldots, \#P_{\sumRho(\lfloor n/2M_{0} \rfloor}).
\]Note that under this requirement, $\w'$ has the same values of $(w_{i})_{i=0}^{\sumRho(\lfloor n/2M_{0} \rfloor)}$ and $\{v_{i}: i \neq i(1), \ldots, i(\#P_{\sumRho(\lfloor n/2M_{0} \rfloor)})\}$ with $\w$. By Lemma \ref{lem:pivotEquiv}, we have $P_{\sumRho(\lfloor n/2M_{0} \rfloor)}(\w) = P_{\sumRho(\lfloor n/2M_{0} \rfloor)}(\w')$, and the above relation becomes an equivalence relation.

Recall that conditioned on the data $(\rho_{i})_{i \ge0}$, $(\beta_{j}, \gamma_{j}, v_{j})$ is distributed according to $\mu_{S}^{2} \times \big( \mu^{\ast 2M_{0}} \ast (\tau^{\ast})^{\ast( t_{j}' - t_{j}- N - 0.5)} \big) = \mu_{S}^{2} \times \big( \mu^{\ast 2M_{0}} \ast \nu^{\ast( t_{j}' - t_{j}- N - 0.5)} \big)$. Now let $\mathcal{E}$ be a pivotal equivalence class that has pivotal times $P_{\sumRho(\lfloor n/2M_{0} \rfloor)} = \{i(1) < \ldots < i(m)\}$. Then $(\beta_{i(l)}, \gamma_{i(l)}, v_{i(l)})$'s are independent and distributed according to the restriction of $\mu_{S}^{2} \times \left(\mu^{\ast 2M_{0} N} \ast \nu^{\ast( t_{j}' - t_{j}- N - 1)} \right) $ onto the set of ``legitimate choices'' $\pivotComplete$. To describe this, let us define a (not necessarily probability) measure \[
\mu^{(1)}(s', s'', r) :=  \left\{ \begin{array}{cc} \mu_{S}(s') \mu_{S}(s'') (\mu^{\ast 2M_{0} N} \ast \nu^{\ast (t_{j}' - t_{j} - N-1)} ) (r) & \textrm{if $(s', s'', r) \in \pivotComplete$,} \\ 0 & \textrm{otherwise.}\end{array}\right.
\]
then $(\beta_{i(l)}, \gamma_{i(l)}, v_{i(l)})$ is distributed according to the normalized version $\mu^{(0)}$ of $\mu^{(1)}$, namely, $\mu^{(0)}(A) := \frac{1}{\mu^{(1)}(S^{2} \times G)} \mu^{(1)}(A)$ for each $A \subseteq S^{2} \times G$.

For each $r \in G$, among $N_{0}^{2}$ choices of $s'$ and $s''$ in $S$ at least $N_{0}^{2} - 2N_{0}$ choices qualify the criterion and make $(s', s'', r) \in \pivotComplete$ by the Schottky property. (See the discussion in Display \ref{eqn:pivotingCond2} and \ref{eqn:pivotingCond3}.) This implies the bound for each $r \in G$:\[
 \left(1 - \frac{2}{N_{0}} \right) (\mu^{\ast 2M_{0} N} \ast \nu^{\ast (t_{j}' - t_{j} - N-1)} ) (r) \le \mu^{(1)}(S^{2} \times \{r\}) \le (\mu^{\ast 2M_{0} N} \ast \nu^{\ast (t_{j}' - t_{j} - N-1)} ) (r).
\]
Summing this up for all $r \in G$, we obtain $1-2/N_{0} \le \mu^{(1)}(S^{2} \times G) \le 1$. Combining these two estimates, we conclude the following for every $g \in G$:\[\begin{aligned}
 \left(1 - \frac{2}{N_{0}} \right) (\mu^{\ast 2M_{0} N} \ast \nu^{\ast (t_{j}' - t_{j} - N-1)} ) (r)& \le \Prob( v_{i(l)} = r) = \mu^{(0)} (S^{2} \times \{r\}) \\
 &\le \left( 1 +\frac{3}{N_{0}} \right) (\mu^{\ast 2M_{0} N} \ast \nu^{\ast (t_{j}' - t_{j} - N-1)} ) (r).
 \end{aligned}
\]
This settles Item (iii) in Definition \ref{dfn:pivotalEquivLDP} as desired.
\end{proof}

We now establish the large deviation principle for random walks.

\begin{thm}\label{thm:LDPStrong}
Let $(X, G, o)$ be as in Convention \ref{conv:strong} and let $(Z_{n})_{n}$ be the random walk generated by a non-elementary probability measure $\mu$ on $G$. Let $\lambda(\mu) = \lim_{n} \frac{1}{n} \E[d(o, Z_{n} o)]$ be the drift of $\mu$. Then for each $0 < L <\lambda(\mu)$, the probability $\Prob(d(o, Z_{n} o) \le Ln)$ decays exponentially as $n$ goes to infinity.
\end{thm}

Recall that $\lambda(\mu) = +\infty$ when $\mu$ has infinite first moment, by Corollary \ref{cor:SLLNInfinite}.

\begin{proof}
Due to the subadditivity, we have $\E_{\mu^{\ast N}} [d(o, go)] \ge \lambda(\mu) N$ for each $N>0$. Since $L$ is smaller than $\lambda(\mu)$, there exists $\epsilon > 0$ such that \[
(1-\epsilon)^{3}  \lambda(\mu)> L+\epsilon.
\] For this $\epsilon>0$, let $S$ be a long enough Schottky set for $\mu$ with cardinality greater than $100/\epsilon$. By Proposition \ref{prop:gouezelRWLDP}, there exists a non-elementary probability measure $\nu$, and for each sufficiently large $N$,  a partition $\mathscr{P}_{n, N, \epsilon}$ into $(n, N, \epsilon, \nu)$-pivotal equivalence classes for each $n$ such that \[
\Prob\left( \w : \frac{1}{2} \# \diffPivot^{(n, N, \epsilon, \nu)}(\w) \le (1- \epsilon) \frac{n}{2M_{0} N} \right)
\]decays exponentially in $n$. Let $C > 0$ be a constant for $\nu$ provided by Corollary \ref{cor:gouezelRW1Cor}: we have \[
\Prob_{\nu^{\ast m}}( h: d(o, gho) \ge d(o, go) - C) \ge 1-\epsilon/2
\]
for each $g \in G$ and each $m > 0$. We now fix an $N$ such that $N > \frac{C}{2M_{0} \lambda(\mu) \epsilon}$. 

Let $\mathcal{E}$ be an equivalence class such that $\frac{1}{2} \# \diffPivot^{(n, N, \epsilon, \nu)}(\mathcal{E}) \ge (1-\epsilon) \frac{n}{2M_{0} N}$. Then for each $\w \in \mathcal{E}$, $(o, \axes_{j(1)}, \ldots, \axes_{j'(\#\diffPivot/2)}, Z_{n} o)$ is $D_{0}$-semi-aligned. The second inequality in Item (ii) of Lemma \ref{lem:SchottkyAlign} tells us that \[\begin{aligned}
d(o, Z_{n} o) \ge \sum_{i=1}^{\#\diffPivot / 2} d(Z_{j(i)}o, Z_{j'(i) - M_{0}} o) = \sum_{i=1}^{\#\diffPivot / 2} d(o, r_{i} o) \quad (r_{i} := g_{j(i) +1} \cdots g_{j'(i) - M_{0}}).
\end{aligned}
\]
Since $r_{i}$'s are non-negative i.i.d. with \[
\E[d(o, r_{i} o)] \ge (1-\epsilon) \E_{\mu^{\ast 2M_{0} N}}[d(o, go) - C] \ge (1-\epsilon)^{2}  \cdot 2M_{0}N\lambda(\mu),
\]
we can apply the classical theory of large deviation. As a result, there exists $K'>0$ such that \[
\Prob\left( \left.d(o, Z_{n}o) \le(1-\epsilon)^{3} \lambda(\mu) n  \, \right| \, \mathcal{E}\right)\le K' e^{-n/K'} \quad (\forall n > 0).
\]
Summing up this conditional probability, we obtain the desired exponential bound.
\end{proof}

We now connect Theorem \ref{thm:LDPStrong} with the large deviation principle. In \cite[Proposition 2.3, Theorem 2.8]{boulanger2022large}, Boulanger, Mathieu, Sert and Sisto presented a general theory of large deviation principles on metric spaces with Schottky sets. Combining their result with Theorem \ref{thm:expBdMod}, we establish the large deviation principle for random walks on the mapping class group.

\begin{cor}[Large deviation principle] \label{cor:LDPMod}
Let $(X, G, o)$ be as in Convention \ref{conv:strong} and let $(Z_{n})_{n\ge0}$ be the random walk generated by a non-elementary probability measure $\mu$ on $G$. Then there exists a proper convex function $I : \mathbb{R} \rightarrow [0, +\infty]$, vanishing only at the drift $\lambda(\mu)$, such that  \[\begin{aligned}
- \inf_{x \in \operatorname{int}(E)} I(x) &\le \liminf_{n \rightarrow \infty} \frac{1}{n} \log \Prob\left( \frac{1}{n}d(id, Z_{n}) \in E\right), \\
 -\inf_{x \in \bar{E}} I(x) & \ge\limsup_{n \rightarrow \infty} \frac{1}{n} \log \Prob\left(\frac{1}{n} d(id, Z_{n}) \in E\right) 
\end{aligned}
\]
holds for every measurable set $E \subseteq \mathbb{R}$.
\end{cor}
We note the work of Corso \cite{corso2021large}, who proved that the rate function exists and is proper for random walks involving strongly contracting isometries. Our Corollary \ref{cor:LDPMod} strengthens Corso's result by showing that $I(x) \neq 0$ for $x \in [0, \lambda(\mu))$, which is a consequence of Theorem \ref{thm:expBdMod}.

\part{Random walks with weakly contracting isometries} \label{part:weak}

In this part, we deal with groups acting on a space $X$ and another space $\tilde{X}$ equivariantly, where the action on $X$ involves strong contraction and the action on $\tilde{X}$ involves weak contraction: see Convention \ref{conv:weak}. After studying alignment of weakly contracting directions in Section \ref{section:weakAlign}, we establish limit theorems for mapping class groups in Section \ref{section:limitHHG}.

\section{Mapping class groups and HHGs} \label{section:HHG}

Let $\Sigma$ be a finite-type hyperbolic surface, let $(\tilde{X}, \tilde{d})$ be the Cayley graph of the mapping class group $G = \Mod(\Sigma)$ of $\Sigma$, and let $(X, d)$ be the curve complex of $\Sigma$ or the Teichm{\"u}ller space of $\Sigma$. The action of $G$ on $(X, d)$ satisfies Convention \ref{conv:strong}: $G$ contains independent pseudo-Anosov mapping classes that have strongly contracting orbits on $X$ (\cite[Contraction Theorem]{minsky1996quasi-projections}, \cite[Proposition 4.6]{masur1999curve}).

Let $\Proj : \tilde{X} \rightarrow X$ be the orbit map: $\Proj(g) = go$, where $o \in X$ is the basepoint. Since $G$ is finite generated and acts on $X$ by isometries, the map $\Proj$ is coarsely Lipschitz and is $G$-equivariant. We will denote by $\tilde{A}$ the object ``in the upper space'' corresponding to an object $A$ ``in the lower space". For example, we fix basepoints $\tilde{o} = id \in \tilde{X}$ and $o \in X$ that satisfy $\Proj(\tilde{o}) = o$. 

For each subset $\tilde{A} \subseteq \tilde{X}$, we define the projection $\tilde{\pi}_{\tilde{A}}$ from $\tilde{X}$ onto $\tilde{A}$ by referring to the closest point projection at the lower space $X$. Namely, for $\tilde{x} \in \tilde{X}$ and its projection $x := \Proj(\tilde{x})$, we define $\tilde{\pi}_{\tilde{A}}(\tilde{x}) := \Proj^{-1} \circ \pi_{A} \circ \Proj$ by $\tilde{a} \in \tilde{\pi}_{\tilde{A}}(\tilde{x}) \,\,\Leftrightarrow \,\, a \in \pi_{A}(x).$

\begin{lem}\label{lem:strongWeakCouple}
For each $C>1$ there exists $D>1$ such that if a $C$-quasigeodesic $\tilde{\gamma} : I \rightarrow \tilde{X}$ on $\tilde{X}$ has projection $\gamma$ onto $X$ that is a $C$-contracting axis, then  $\tilde{\gamma}$ is $D$-weakly contracting with respect to the map $\tilde{\pi}_{\tilde{\gamma}} := \Proj^{-1} \circ \pi_{\gamma} \circ \Proj$.
\end{lem}

\begin{proof}
Let us first consider the case that $\tilde{X}$ is the Cayley graph of $\Mod(\Sigma)$ and $(X, d)$ is the curve complex $\mathcal{C}(\Sigma)$ of $\Sigma$. Recall that there are coarsely Lipschitz projections $\Proj_{U} : \tilde{X} \rightarrow \{\textrm{uniformly bounded subsets of $\mathcal{C}U$}\}$ from $\tilde{X}$ to the curve complex $\mathcal{C}U$ of subsurfaces $U \subseteq \Sigma$, and $\rho_{U}^{V} : \mathcal{C}U \rightarrow \{\textrm{uniformly bounded subsets of $\mathcal{C}V$}\}$ for every pair of nested subsurfaces $V \subseteq U \subseteq \Sigma$. Further, $\Proj_{U}$ and $\rho_{\Sigma}^{U} \circ \Proj$ are uniformly coarsely equivalent.

Since $\tilde{\gamma}$ and $\gamma = \Pr \circ \tilde{\gamma}$ are $C$-quasigeodesics, $\{ \Proj_{U}(\tilde{\gamma}) : U \subsetneq \Sigma\}$ have uniformly bounded diameter (depending on $C$). This is due to the bounded geodesic image property. Namely, given a proper subsurface $U \subsetneq \Sigma$, there exists a uniformly bounded  neighborhood $N$ of $\partial U \subseteq \mathcal{C}(\Sigma)$ such that $\gamma \setminus N$ is uniformly close to a geodesic on $\mathcal{C}(\Sigma)$ that is disjoint from $\partial U$. By \cite[Theorem 3.1]{masur2000curve}, $\rho_{\Sigma}^{U} (\gamma \setminus N)$ has bounded diameter. Since $N$ is bounded, $\rho_{\Sigma}^{U}(\gamma \cap N)$ is also bounded.

Given the uniform boundedness of $\Proj_{U}(\tilde{\gamma})$'s, i.e., the coboundedness of $\tilde{\gamma}$, the weakly contracting property of $\tilde{\gamma}$ follows from \cite[Theorem 4.2]{duchin2009divergence} (cf. \cite[Lemma 5.6]{behrstock2006asymptotic}). More explicitly, \cite[Theorem 4.2]{duchin2009divergence} guarantees a constant $E = E(C)$ such that, for each $\tilde{x} \in \tilde{X}$, we have \[
\diam_{X} \Big( \pi_{\gamma}\circ \Proj \Big(\big\{ \tilde{p} \in \tilde{X}: d(\tilde{x}, \tilde{p}) < \frac{1}{E} \tilde{d}(\tilde{x}, \tilde{\gamma}) \Big\}\Big)\Big) < E.
\]
Since the $d$-diameter along $\gamma$ and $\tilde{d}$-diameter along $\tilde{\gamma}$ are coarsely equivalent (as $\gamma$ is a quasigeodesic), we conclude that $\tilde{\gamma}$ is weakly contracting with respect to $\tilde{\pi}_{\tilde{\gamma}}$.

When $(X, d)$ is the Teichm{\"u}ller space of $\Sigma$, the strongly contracting property of $\gamma$ implies that the Teichm{\"u}ller geodesics $[\gamma(t), \gamma(s)]$ for $t<s$ are contained in a uniform neighborhood of $\gamma$ (Corollary \ref{cor:BGIPSelfNbd}) and are hence uniformly thick. This in turn implies that $\Pr_{U}(\tilde{\gamma}(t), \tilde{\gamma}(s))$'s for $t<s$ and proper subsurfaces $U \subsetneq \Sigma$ are uniformly bounded (\cite[Theorem 1.1]{rafi2005short}, \cite[Theorem 5.5]{rafi2014hyperbolicity}, \cite[Theorem 4.1]{rafi2009covers}, \cite[Lemma 5.1]{durham2015convex}). Then we similarly deduce the weakly contracting property of $\tilde{\gamma}$ by \cite[Theorem 4.2]{duchin2009divergence}.
\end{proof}

In general, Lemma \ref{lem:strongWeakCouple} can be generalized to the setting where $G$ is a hierarchically hyperbolic group (HHG), $(\tilde{X}, \tilde{d})$ is its Cayley graph and $(X, d)$ is the top curve graph for $G$. This follows from \cite[Corollary 6.2]{abbott2021largest} ((3) $\Rightarrow$ (2)) and \cite[Theorem 4.4]{abbott2021largest}. Note that even though Corollary 6.2 and Theorem 4.4 assumes the unbounded products of the HHG structure for $G$, which is not granted in general, the directions we need do not require such an assumption.

In particular, the pseudo-Anosov axes on $\tilde{X}$ are weakly contracting. Hence, our setting is:

\begin{conv}\label{conv:weak}
We fix $B>0$ and assume that:
\begin{enumerate}
\item $(\tilde{X}, \tilde{d})$, $(X, d)$ are geodesic metric spaces;
\item $\Proj : \tilde{X} \rightarrow X$ is a coarsely Lipschitz map, i.e., 
for all $\tilde{x}, \tilde{y} \in \tilde{X}$\[
d(\Proj(\tilde{x}), \Proj(\tilde{y})) \le B\tilde{d}(\tilde{x}, \tilde{y})+ B;
\]
\item $G$ is a countable group of isometries acting on $\tilde{X}$ and $X$ equivariantly;
\item $\tilde{o} \in \tilde{X}$ and $o \in X$ are basepoints that satisfy $\Proj(\tilde{o}) = o$;
\item For each $C > 1$ there exists $D> 1$ such that a path $\tilde{\gamma}$ on $\tilde{X}$ is $D$-weakly contracting with respect to $\tilde{\pi}_{\tilde{\gamma}}$ whenever its projection $\Proj(\tilde{\gamma})$ is a $C$-contracting axis;
\item $G$ contains two independent strongly contracting isometries of $X$.
\end{enumerate}
For each object $\tilde{A} \subseteq \tilde{X}$, we denote by $A$ its projection $\Proj(\tilde{A}) \subseteq X$.
\end{conv}

When Item (iii) is replaced with the coarse equivariance condition, this setting also covers HHGs acting on the top curve graph. For simplicity, we denote the word norm of $g \in G$ by $|g|$. In the general case, one can replace $|g|$ with $\tilde{d}(\tilde{o}, g \tilde{o})$.

\section{Alignment II: weakly contracting axes} \label{section:weakAlign}

Throughout, we adopt Convention \ref{conv:weak}. We define the alignment among paths $\tilde{\kappa}_{1}, \ldots, \tilde{\kappa}_{n}$ on $\tilde{X}$ based on Definition \ref{dfn:alignment} \emph{with respect to the projections $\tilde{\pi}_{\tilde{\kappa}_{i}} := \Proj^{-1} \circ \pi_{\kappa_{i}} \circ \Proj$}. 

\begin{lem}\label{lem:alignCoupled}
For each $K > 1$ there exists $K' > K$ such that the following hold. Let $\tilde{x}, \tilde{y} \in \tilde{X}$ and let $\kappa$ be a path on $\tilde{X}$ whose projection $\kappa$ on $X$ is a $K$-contracting axis. Then $\tilde{\kappa}$ is a $K'$-quasigeodesic that is $K'$-weakly contracting with respect to $\tilde{\pi}_{\tilde{\kappa}}$. Moreover, for each $C>1$ we have the implication \[\begin{aligned}
\textrm{$(\tilde{x}, \tilde{\kappa}, \tilde{y})$ is $C$-aligned} \Rightarrow \textrm{$(x, \kappa, y)$ is $K'C$-aligned}, \\
\textrm{$(x, \kappa, y)$ is $C$-aligned} \Rightarrow \textrm{$(\tilde{x}, \tilde{\kappa}, \tilde{y})$ is $K'C$-aligned}.
\end{aligned}
\]
\end{lem}

\begin{proof}
Let $\tilde{\kappa} : I \rightarrow \tilde{X}$ and $\kappa := \Proj \circ \tilde{\kappa} : I \rightarrow X$. The weakly contracting property of $\tilde{\kappa}$ is given by Lemma \ref{lem:strongWeakCouple}. If we denote by $F$ the coarse inverse of $\kappa$, $\Proj$ and $\tilde{\kappa} \circ F$ are maps between $\kappa$ and $\tilde{\kappa}$, and are coarse inverses of each other. This implies the coarse comparison\[
\frac{1}{K''} \tilde{d}(\tilde{p}, \tilde{q}) - K'' \le d(p, q) \le K''\tilde{d}(\tilde{p}, \tilde{q}) + K''
\]
for all points $\tilde{p}, \tilde{q}$ on $\tilde{\kappa}$, for some $K'' = K''(K)$. This implies the remaining items.
\end{proof}

We now prove the main proposition of this section.

\begin{prop}\label{prop:weakBGIPConcat}
For each $K, D > 1$, there exist $E, L'> K, D$ such that the following holds.

Let $L \ge L'$, let $\tilde{x}, \tilde{y} \in \tilde{X}$ and let $\tilde{\kappa}_{1}, \ldots, \tilde{\kappa}_{n}$ be paths on $\tilde{X}$ whose domains are longer than $L$ and such that their projections are $K$-contracting axes. Suppose that $(x, \kappa_{1}, \ldots, \kappa_{n}, y)$ is $D$-aligned. Then there exist points $\tilde{p}_{1}, \ldots, \tilde{p}_{n}$ on $[\tilde{x}, \tilde{y}]$, in order from left to right, such that \begin{equation}\label{eqn:weakBGIPQuasi}
\tilde{d}(\tilde{p}_{i}, \tilde{\kappa}_{i}) \le \sum_{j=1}^{n+1} e^{-|j-i-0.5| L/ E} \diam_{\tilde{X}}(\tilde{\kappa}_{j-1} \cup \tilde{\kappa}_{j}) + E.
\end{equation}
Here, we plug in $\tilde{\kappa}_{0} = \tilde{x}$ and $\tilde{\kappa}_{n+1} = \tilde{y}$.
\end{prop}

\begin{proof}
Let $B$ be the coarse Lipschitzness constant for $\Proj$, and define the constants: \begin{itemize}
\item let $K_{2} = K'(K)$ be as in Lemma \ref{lem:alignCoupled}, which is larger than $K>1$;
\item let $K_{4} = K'(K)$ be as in Lemma \ref{lem:weakContractingConvex}, which is larger than $K>1$;
\item let $E_{1}= E(K, D)$ be as in Lemma \ref{lem:1segmentVar}, which is larger than $K>1$;
\item let $E_{2} = E(K, D)$, $L_{0} = L(K, D)$ be as in Proposition \ref{prop:BGIPConcat}.
\end{itemize}
Now we define constants \[
\begin{aligned}
E&= 16KK_{2} K_{4} (1 + \log 2K_{4}) +E_{1} + E_{2} + B\\
L' &= L_{0} + 4KK_{2}(B+5K+K_{2} E).
\end{aligned} 
\]Then the following hold for all $L \ge L'$: \[
\begin{aligned}
\frac{L}{K} - E - K &\ge L/2K \ge  E_{1} + 2E_{2} + B + 4K, \\
L &\ge 4KK_{2} \left( \frac{B + 5K}{K_{2}} + K_{2} E \right), \\
\frac{1}{2} e^{-2L/E} &\ge K_{4}e^{-\frac{L}{4KK_{2} K_{4}}}.
 \end{aligned}
\]
Note that
 \begin{fact}\label{fact:weakBGIPConcat}
Let $C>0$, let $x \in X$ and let $\kappa$ be a $K$-contracting axis on $X$ with $L$-long domain.   If $(x, \kappa)$ is $C$-aligned, then $(\kappa, x)$ is not $(\frac{L}{K} - C-K)$-aligned.
\end{fact}

Let $L \ge L'$, let $\tilde{x}, \tilde{y} \in \tilde{X}$ and let $\tilde{\kappa}_{1}, \ldots, \tilde{\kappa}_{n}$ be paths on $\tilde{X}$ whose domains are longer than $L$ and whose projections $\kappa_{i}$'s onto $X$ are $K$-contracting axes. Recall our convention that, whenever we define $\tilde{A} \subseteq \tilde{X}$, we use the notation $A := \Proj(\tilde{A})$.

\emph{Step 1}. We prove the following for $n \ge 2$:\begin{quote}
if $(x, \kappa_{1})$ is $E$-aligned and $(\kappa_{1}, \ldots, \kappa_{n}, y)$ is $D$-aligned, then $[\tilde{x}, \tilde{y}]$ has points $\tilde{z}_{2}, \ldots, \tilde{z}_{n}$, in order from left to right, such that $(\kappa_{i-1}, z_{i})$ and $(z_{i}, \kappa_{i})$ are $E$-aligned ($z_{i} = \Proj \tilde{z}_{i}$).
\end{quote}
We induct on the number $n$ of the contracting axes. First, Proposition \ref{prop:BGIPConcat} implies that $(x, \kappa_{i}, y)$ is $E_{2}$-aligned for each $i$. In view of Fact \ref{fact:weakBGIPConcat}, $(\kappa_{1}, x)$ is not $(E_{1} + E_{2} +  B+4K)$-aligned but $(\kappa_{1}, y)$ is $E_{2}$-aligned. Now note that: \begin{itemize}
\item $\pi_{\kappa_{1}}$ is $(1, 4K)$-coarsely Lipschitz (Lemma \ref{lem:projBdd}),
\item $\Proj$ is $B$-coarsely Lipschitz and hence $\pi_{\kappa_{1}}\circ \Proj$ is $(B, B+4K)$-coarsely Lipschitz, and
\item the geodesic $[\tilde{x}, \tilde{y}]$ is connected.
\end{itemize}
Pick the rightmost point $\tilde{z}_{2} \in [\tilde{x}, \tilde{y}]$ such that $(\kappa_{1}, z_{2})$ is not $(E_{1} + E_{2})$-aligned. Then $(\kappa_{1}, z_{2})$ is $(E_{1} + E_{2} + B+4K)$-aligned, and hence $E$-aligned, since $\pi_{\kappa_{1}} \circ \Proj$ is $B$-coarsely Lipschitz. Since $(\kappa_{1}, \kappa_{2})$ is $D$-aligned and  $(\kappa_{1}, z_{2})$ is not $E_{1}$-aligned, Lemma \ref{lem:1segmentVar} implies that $(z_{2}, \kappa_{1})$ is $E_{1}$-aligned.

When $n = 2$, the proof ends here. Otherwise, note that $(z_{2}, \kappa_{2})$ is $E$-aligned and $(\kappa_{2}, \ldots, \kappa_{n}, y)$ is $D$-aligned. By the induction hypothesis, there exist $\tilde{z}_{3}, \ldots, \tilde{z}_{n}$ on $[\tilde{z}_{2}, \tilde{y}]$, in order from left to right, such that $(\kappa_{i-1}, z_{i})$ and $(z_{i}, \kappa_{i})$ are $E$-aligned for each $i\ge 3$. The claim now follows. 

\emph{Step 2: Construction of $\tilde{p}_{j}$'s}.
We now assume that $(x, \kappa_{1}, \ldots, \kappa_{n}, y)$ is $D$-aligned. By Step 1, we obtain points $\tilde{z}_{2}, \ldots, \tilde{z}_{n}$ on $[\tilde{x}, \tilde{y}]$, in order from left to right. We let $\tilde{z}_{1} := \tilde{x}$ and $\tilde{z}_{n+1} := \tilde{y}$.

Pick $j \in \{1, \ldots, n\}$. Then $(z_{j}, \kappa_{j})$ is $E$-aligned, and hence $L/2K$-aligned. Meanwhile, $(\kappa_{j}, z_{j+1})$ is $E$-aligned, so $(z_{j+1}, \kappa_{j})$ is not $(L/K - E-K)$-aligned, and hence not $L/2K$-aligned. Now let $\tilde{p}_{j}$ to be the rightmost point on $[\tilde{z}_{j}, \tilde{z}_{j+1}]$ such that $(p_{j}, \kappa_{j})$ is not $L/2K$-aligned. Then by the $(B, B+4K)$-Lipschitzness of $\pi_{\kappa_{j}} \circ \Proj$, we have that \[
L/2K - (B+4K) \le \diam_{X} \big( \textrm{beginning point of $\kappa_{j}$} \cup \pi_{\kappa_{j}}(p_{j}) \big) \le L/2K +(B+4K).
\]
In particular, $(p_{j}, \kappa_{j})$ is not  $(L/2K - (B+4K))$-aligned. Moreover, $(\kappa_{j} ,p_{j})$ is not $(L/2K - (B+5K))$-aligned by Fact \ref{fact:weakBGIPConcat}. Denoting the beginning point of $\tilde{\kappa}_{j}$ by $q_{j}$, Lemma \ref{lem:alignCoupled} implies \[
\begin{aligned}
\diam_{\tilde{X}}\Big(\tilde{\pi}_{\tilde{\kappa}_{j}}(\tilde{z}_{j}) \cup \tilde{\pi}_{\tilde{\kappa}_{j}}(\tilde{p}_{j}) \Big) &\ge \diam_{\tilde{X}}\Big(\tilde{\pi}_{\tilde{\kappa}_{j}}(\tilde{z}_{j}) \cup q_{j} \Big) - \diam_{\tilde{X}}\Big(q_{j}\cup \tilde{\pi}_{\tilde{\kappa}_{j}}(\tilde{p}_{j}) \Big) \\
&\ge \frac{1}{K_{2}} \left(\frac{L}{2K} -B-4K\right) - K_{2} E \ge \frac{L}{4KK_{2}}.
\end{aligned}
\]
For a similar reason, $\diam_{\tilde{X}}\Big(\tilde{\pi}_{\tilde{\kappa}_{j}}(\tilde{z}_{j+1}) \cup \tilde{\pi}_{\tilde{\kappa}_{j}}(\tilde{p}_{j}) \Big)$ is at least $ \frac{1}{K_{2}} \left(\frac{L}{2K} -B-5K\right) - K_{2} E \ge \frac{L}{4KK_{2}}$.
Now Lemma \ref{lem:weakContractingConvex} implies \begin{equation}\label{eqn:pjDistEstimate} \begin{aligned}
\tilde{d}(\tilde{p}_{j}, \tilde{\kappa}_{j}) &\le K_{4} e^{-\frac{L}{4KK_{2}K_{4}}} \tilde{d}(\tilde{z}_{j}, \tilde{\kappa}_{j}) + K_{4} e^{-\frac{L}{4KK_{2}K_{4}}} \tilde{d}(\tilde{z}_{j+1}, \tilde{\kappa}_{j})  + K_{4} \\
&\le \frac{1}{2} e^{-2L/E}  \tilde{d}(\tilde{z}_{j}, \tilde{\kappa}_{j}) + \frac{1}{2}e^{-2L/E}   \tilde{d}(\tilde{z}_{j+1}, \tilde{\kappa}_{j}) +K_{4}.
\end{aligned}
\end{equation}

\emph{Step 3: Estimating $\tilde{d}(\tilde{p}_{j}, \tilde{\kappa}_{j})$}.

Given Inequality \ref{eqn:pjDistEstimate}, it now suffices to prove: \begin{equation}
\label{eqn:weakBGIPQuasiZ}
\tilde{d}(\tilde{z}_{i}, \tilde{\kappa}_{i-1}) + \tilde{d}(\tilde{z}_{i}, \tilde{\kappa}_{i}) \le \sum_{j=1}^{n+1} 2 e^{-|j-i| L/ E} \diam_{\tilde{X}}(\tilde{\kappa}_{j-1} \cup \tilde{\kappa}_{j}) + E
\end{equation}
for $i = 2, \ldots, n$. To prove this, we collect indices $i$  that violates Inequality \ref{eqn:weakBGIPQuasiZ}. Let $I = \{m, m+1, \ldots, m'\}$ be a maximal 1-connected set of such indices. We aim to show that $I$ is empty.

Suppose to the contrary that $I$ is nonempty. Note first that $\tilde{z}_{1}$ and $\tilde{z}_{n+1}$ satisfy Inequality \ref{eqn:weakBGIPQuasiZ}, i.e., $1, n+1 \notin I$; hence $m \ge 2$ and $m' \le n$. We now compute $\tilde{d}(\tilde{z}_{m-1}, \tilde{z}_{m'+1})$ in two different ways. First, using Inequality \ref{eqn:pjDistEstimate} we deduce
\[\begin{aligned}
\tilde{d}(\tilde{z}_{m-1}, \tilde{z}_{m'+1}) &= \sum_{j=m}^{m'+1} \tilde{d}(\tilde{z}_{j-1}, \tilde{z}_{j}) \ge \sum_{j=m}^{m'+1}\Big(\tilde{d}(\tilde{z}_{j-1}, \tilde{p}_{j-1}) + \tilde{d}(\tilde{p}_{j-1}, \tilde{z}_{j}) \Big)\\
&\ge \sum_{j=m}^{m' + 1} \Big(\tilde{d}(\tilde{z}_{j-1}, \tilde{\kappa}_{j-1}) + \tilde{d}( \tilde{\kappa}_{j-1}, \tilde{z}_{j}) -2 \tilde{d}(\tilde{p}_{j-1}, \tilde{\kappa}_{j-1})\Big) \\
&\ge \sum_{j=m}^{m' + 1}\left( (1-e^{-2L/E})\left(\tilde{d}(\tilde{z}_{j-1}, \tilde{\kappa}_{j-1}) + \tilde{d}(\tilde{\kappa}_{j-1}, \tilde{z}_{j})  \right) - 2K_{4}\right).
\end{aligned}
\]
Recall that $\tilde{d}(\tilde{\kappa}_{j-1}, \tilde{z}_{j}) + \tilde{d}(\tilde{z}_{j}, \tilde{\kappa}_{j}) \ge \sum_{k} 2e^{-|k - j| L/E} \diam_{\tilde{X}} (\tilde{\kappa}_{k-1} \cup \tilde{\kappa}_{k}) + E$ holds for $m \le j \le m'$. Moreover, $E \cdot (\# I) \ge 2K_{4}  \cdot (\# I + 1) + 0.5E$ because $\# I \ge 1$ and  $E \ge 8K_{4}$. Hence, we obtain \[\begin{aligned}
\tilde{d}(\tilde{z}_{m-1}, \tilde{z}_{m'+1}) &\ge (1-e^{-2L/E})\Big(\tilde{d}(\tilde{z}_{m-1}, \tilde{\kappa}_{m-1}) + \tilde{d}(\tilde{\kappa}_{m'}, \tilde{z}_{m'+1})\Big) \\
& \quad + (1-e^{-2L/E}) \sum_{j=m}^{m'} \sum_{k=1}^{N+1} 2e^{-|k - j| L/E} \diam_{\tilde{X}} (\tilde{\kappa}_{k-1} \cup \tilde{\kappa}_{k}) + (1-e^{-2L/E}) \cdot 0.5E.
\end{aligned}
\]
If we rearrange the double summation with respect to $k$, the RHS is at least \[\begin{aligned}
&(1-e^{-2L/E})\Big( \tilde{d}(\tilde{z}_{m-1}, \tilde{\kappa}_{m-1}) + \tilde{d}(\tilde{z}_{m'+1}, \tilde{\kappa}_{m'}) + 0.5E\Big) + 2(1-e^{-2L/E})\sum_{j= m}^{m'}  \diam(\tilde{\kappa}_{j-1} \cup \tilde{\kappa}_{j}) \\
&+2(1-e^{-2L/E})\bigg( \sum_{1 \le k < m}  e^{ -(m - k)L/E} \diam_{\tilde{X}} (\tilde{\kappa}_{k-1} \cup \tilde{\kappa}_{k}) + \sum_{m'<k \le N+1} e^{ -(k - m')L/E} \diam_{\tilde{X}} (\tilde{\kappa}_{k-1} \cup \tilde{\kappa}_{k}) \bigg).
\end{aligned}
\]
Next, we will obtain an upper bound of $\tilde{d}(\tilde{z}_{m-1} , \tilde{z}_{m'+1})$: \[\begin{aligned}
\tilde{d}(\tilde{z}_{m-1}, \tilde{z}_{m' + 1} ) &\le \tilde{d}(\tilde{z}_{m-1}, \tilde{\kappa}_{m-1}) + \sum_{j=m}^{m'} \diam(\tilde{\kappa}_{j-1} \cup \tilde{\kappa}_{j}) + \tilde{d}(\tilde{z}_{m'+1}, \tilde{\kappa}_{m'}) \\
&\le (1-e^{-2L/E}) \Big(\tilde{d}(\tilde{z}_{m-1}, \tilde{\kappa}_{m-1}) + \tilde{d}(\tilde{z}_{m'+1}, \tilde{\kappa}_{m'}) \Big) + \sum_{j=m}^{m'} \diam(\tilde{\kappa}_{j-1} \cup \tilde{\kappa}_{j})  \\
&\quad + 2e^{-2L/E} \cdot E + e^{-2L/E}  \Big( \tilde{d}(\tilde{\kappa}_{m-2}, \tilde{z}_{m-1}) + \tilde{d}(\tilde{z}_{m-1}, \tilde{\kappa}_{m-1}) - E \Big)\\
&\quad + e^{-2L/E}  \Big( \tilde{d}(\tilde{\kappa}_{m'}, \tilde{z}_{m'+1}) + \tilde{d}(\tilde{z}_{m'+1}, \tilde{\kappa}_{m'+1}) - E \Big).
\end{aligned}
\]
We know that $6e^{-L/E} \le 1$ because $L >8E$. Having this in mind, we now make use of the fact that $m-1$ and $m'+1$ is not contained in $I$: $\tilde{d}(\tilde{z}_{m-1}, \tilde{z}_{m' + 1} )$ is bounded from above by \[\begin{aligned}
 &\,\, (1-e^{-2L/E}) \Big(\tilde{d}(\tilde{z}_{m-1}, \tilde{\kappa}_{m-1}) + \tilde{d}(\tilde{z}_{m'+1}, \tilde{\kappa}_{m'}) + 0.5E\Big)+ \sum_{j=m}^{m'} \diam_{\tilde{X}}(\tilde{\kappa}_{j-1} \cup \tilde{\kappa}_{j})  \\
&\,\,+ e^{-2L/E} \left( 2\sum_{k=1}^{N+1} \left(e^{-|k - m+1|L/E} + e^{-|k-m'-1|L/E}\right) \diam_{\tilde{X}} (\tilde{\kappa}_{k-1} \cup \tilde{\kappa}_{k}) \right) \\
&\le (1-e^{-2L/E})\Big( \tilde{d}(\tilde{z}_{m-1}, \tilde{\kappa}_{m-1}) + \tilde{d}(\tilde{z}_{m'+1}, \tilde{\kappa}_{m'}) + 0.5E\Big)  \\
&\,\,+ \sum_{j= m}^{m'} (1 + 4 e^{-2L/E}) \diam_{\tilde{X}}(\tilde{\kappa}_{j-1} \cup \tilde{\kappa}_{j})  + \sum_{1 \le k < m} 4e^{-L/E} \cdot  e^{ -(m - k)L/E} \diam_{\tilde{X}} (\tilde{\kappa}_{k-1} \cup \tilde{\kappa}_{k}) \\
&\,\,+ \sum_{m'<k \le N+1} 4 e^{-L/E} \cdot e^{ -(k - m')L/E} \diam_{\tilde{X}} (\tilde{\kappa}_{k-1} \cup \tilde{\kappa}_{k}),
\end{aligned}
\]
which is a contradiction. Hence, $I = \emptyset$ and Inequality \ref{eqn:weakBGIPQuasiZ} is established.
\end{proof}

\section{Limit laws for mapping class groups} \label{section:limitHHG}

We continue to employ the notion of Schottky sets defined in Definition \ref{dfn:Schottky}. Once a Schottky set $S$ and its element $s$ is understood, the translates of $\Gamma^{\pm}(s)$ are now called \emph{Schottky axes on $X$}, whereas the translates of $\tilde{\Gamma}^{\pm}(s)$ are called \emph{Schottky axes on $\tilde{X}$}. 

\begin{definition}\label{dfn:SchottkyLongWeak}
Given a constant $K_{0} > 0$, we define: \begin{itemize}
\item $K_{1} = K'(K_{0})$ be as in Lemma \ref{lem:alignCoupled},
\item $D_{0 } = D(K_{0}, K_{0})$ be as in Lemma \ref{lem:1segment},
\item $E_{0} = E(K_{0}, D_{0})$, $L_{0} = L(K_{0}, D_{0})$ be as in Proposition \ref{prop:BGIPWitness}.
\item $E_{1} = K'(K_{0}, E_{0})$, $L_{1} = L'(K_{0}, E_{0})$ be as in Proposition \ref{prop:weakBGIPConcat}.
\end{itemize}
Let $0 < \epsilon < 1$. If a $K_{0}$-Schottky set $S \subseteq G^{M_{0}}$ consists of sequences of length \[
M_{0} > \max\big(L_{0}, L_{1}, 2K_{1}E_{1}, (-\log (\epsilon^{2}/4)) \cdot E_{1}\big),
\]
then we call $S$ an \emph{$\epsilon$-constricting $K_{0}$-Schottky set}.
\end{definition}

Thanks to Proposition \ref{prop:Schottky}, for every non-elementary probability measure $\mu$ on $G$ and $N, \epsilon > 0$, there exists an $\epsilon$-long enough Schottky set for $\mu$ with cardinality $N$. We are ready to state:

\begin{prop}\label{prop:weakDeviDist}
Let $\mu$ be a non-elementary probability measure on the mapping class group $G$ and $((\check{Z}_{n})_{n}, (Z_{n})_{n})$ be the (bi-directional) random walk generated by $\mu$, with step sequences $((\check{g}_{n})_{n}, (g_{n})_{n})$. Then there exists $K'>0$ such that \[
\Prob\left(\textrm{$d( id, [\check{Z}_{m}, Z_{n}]) \le K'D_{k}$ for all $n, m \ge 0$} \, \Big| \, \check{g}_{k+1}, g_{k+1} \right) \le K' e^{-k/K'}
\]
holds for all $k$, where  \begin{equation}\label{eqn:weakDeviDistDK}
D_{k} :=\sum_{i=1}^{k} |g_{i}|+\sum_{i=1}^{k} |\check{g}_{i}|+ \sum_{i=1}^{\infty} e^{-i/K'}  |g_{i}|+\sum_{i=1}^{\infty} e^{-i/K'} |\check{g}_{i}|+1.
\end{equation}
\end{prop}

\begin{proof}
Let $S$ and $\check{S}$ be a long enough, large and $(2/e)$-constricting $K_{0}$-Schottky sets for $\mu$ and $\check{\mu}$, respectively, for some $K_{0}>0$. Proposition \ref{prop:gouezelRW1} determines a constant $K>0$ (not depending on $k$ but only on $\mu$), a probability space $(\Omega, \Prob)$ for $\mu$ and a partition of $\Omega$ into pivotal equivalence classes that is independent of the backward steps $(\check{g}_{n})_{n>0}$ and such that \[
\Prob(\#\diffPivot(\w) \cap \{1, \ldots, n\} \ge n/K \, | \, g_{k+1}) \ge 1 - K e^{-k/K}, \,\, (n \ge k)
\]
and also another partition into (backward) pivotal equivalence classes that is independent of the forward steps $(g_{n})_{n>0}$ and such that \[
\Prob(\#\diffPivot(\check{\w}) \cap \{1, \ldots, n\} \ge n/K \, | \, \check{g}_{k+1}) \ge 1 - K e^{-k/K}. \,\, (n \ge k)
\]
We enumerate $\diffPivot(\w)$ by $\{j(1) < j(2) < \ldots\}$ and $\diffPivot(\check{\w})$ by $\{\check{j}(1) < \check{j}(2) < \ldots\}$. Let us now define the event $B_{k}$ in $\Omega$; $(\check{\w}, \w) \in B_{k}$ if: \begin{enumerate}
\item $\#\diffPivot (\w)  \cap \{1, \ldots, n\}\ge n/K$ for all $n \ge k/3$;
\item $\#\diffPivot(\check{\w}) \cap \{1, \ldots, n\} \ge n/K$ for all $n \ge k/3$;
\item for each $n \ge k$ and $m \ge k$, the following are $D_{0}$-semi-aligned: \[\begin{aligned}
&\big(o, \,\axes_{j(1)}(\w), \, \axes_{j(2)}(\w),  \, \ldots, \, \axes_{j(\lfloor 2n/3K\rfloor)}(\w), \,Z_{n} o\big),\\
&\big(o, \,\axes_{\check{j}(1)}(\check{\w}), \, \axes_{\check{j}(2)}(\check{\w}),  \, \ldots, \, \axes_{\check{j}(\lfloor 2m/3K\rfloor )}(\check{\w}), \,\check{Z}_{m} o\big);
\end{aligned}
\]
\item $\big(\bar{\axes}_{\check{j}(i)}(\check{\w}),\, \axes_{j(i)}(\w) \big)$ is $D_{0}$-aligned for some $i \le k/3K$.
\end{enumerate}
In the proof of Lemma \ref{lem:Devi} we proved that $\Prob(B_{k})$ decays exponentially in $k$. It remains to prove that $d(id, [\check{Z}_{m}, Z_{n}]) \le K'D_{k}$ for any $n, m>0$ and $(\check{\w}, \w) \in B_{k}$, where we set $K' \ge 8+ 1.5K + E_{1}$. From now on, we fix $k$. When $n \le k$, we automatically have \[
d(id, [\check{Z}_{m}, Z_{n}]) \le d(id, Z_{n}) \le \sum_{i=0}^{k} |g_{i}| \le K'D_{k}.
\]
Similarly, the desired inequality holds when $m \le k$. Now assume $n, m \ge k$. The sequence \[
\big(\check{Z}_{m}o, \overline{\axes}_{\check{j}(\lfloor2m/3K\rfloor)}(\check{\w}), \ldots, \overline{\axes}_{\check{j}(\lfloor k/3K\rfloor)}(\check{\w}), \axes_{j(\lfloor k/3K\rfloor)}(\w), \ldots, \axes_{j(\lfloor2n/3K\rfloor)}(\w), Z_{n}o \big)
\]
is $D_{0}$-semi-aligned, and hence $E_{0}$-aligned by Proposition \ref{prop:BGIPConcat}. Here, recall that the involved Schottky set is $(2/e)$-long enough and that $-\log \left( \frac{4}{e^{2}} \cdot \frac{1}{4} \right) = 2$. Hence, $M_{0}/E_{1} \ge 2$. By Proposition \ref{prop:weakBGIPConcat}, there exists $\tilde{p} \in [\check{Z}_{m}, Z_{n} ]$ whose distance to $\tilde{\axes}_{j(\lfloor k/3K \rfloor)}(\w)$ is at most
 \[
\begin{aligned}
&\sum_{l=\lfloor k/3K \rfloor+1}^{\lfloor2n/3K\rfloor} e^{-l- \lfloor k/3K \rfloor} \diam\big(\tilde{\axes}_{j(l-1)} (\w)\cup \tilde{\axes}_{j(l)}(\w)\big) + \sum_{l=\lfloor k/3K \rfloor+1}^{\lfloor2m/3K\rfloor} e^{-l -\lfloor k/3K \rfloor}  \diam\big(\tilde{\axes}_{\check{j}(l-1)}(\check{\w}) \cup \tilde{\axes}_{\check{j}(l)}(\check{\w})\big) \\
&+ e^{-(\lfloor2n/3K\rfloor -   \lfloor k/3K\rfloor )} \diam \big(\tilde{\axes}_{j(\lfloor2n/3K\rfloor)}(\w) \cup Z_{n} ) + e^{-(\lfloor 2m/3K\rfloor- \lfloor k/3K\rfloor )} \diam \big(\tilde{\axes}_{j(\lfloor2m/3K\rfloor)}(\check{\w}) \cup  \check{Z}_{m} \big) \\
&+ e^{-1} \diam \big(\tilde{\axes}_{j(\lfloor k/3K \rfloor)}(\w) \cup \tilde{\axes}_{\check{j}(\lfloor k/3K \rfloor)}(\check{\w})\big)+ E_{1}.
\end{aligned}
\]

Here, note that \[\begin{aligned}
\diam\big(\tilde{\axes}_{j(k-1)}(\w) \cup \tilde{\axes}_{j(k)})(\w)\big) \le \sum_{i = j(k-1) - M_{0}+1}^{j(k)} | g_{i}|, \\
\diam\big(\tilde{\axes}_{j(\lfloor2n/3K\rfloor)}(\w) \cup Z_{n}\big) \le \sum_{i=j(\lfloor2n/3K\rfloor) - M_{0}+1}^{n} |g_{i}|.
\end{aligned}
\]
Note also that $l - \lfloor k/3K \rfloor \ge \frac{1}{2} l$ for $l > \lfloor2 k/3K \rfloor$. Using these, we deduce \[
\begin{aligned}
\tilde{d}\big(\tilde{p}, \tilde{\axes}_{j(\lfloor k/3K \rfloor)}(\w)\big) 
&\le  \sum_{i = j(\lfloor 2k/3K \rfloor)+1}^{j(\lfloor2n/3K\rfloor)} 2e^{- \frac{1}{2}\min \{l >0: j(l) \ge i\} } |g_{i}| + \sum_{i = \check{j}(\lfloor 2k/3K \rfloor)+1}^{\check{j}(\lfloor2m/3K\rfloor)} 2e^{- \frac{1}{2} \min \{l>0 : \check{j}(l) \ge i\} } |\check{g}_{i}| \\ 
&+\sum_{i = j(\lfloor2n/3K\rfloor)+1}^{n}2 e^{-\frac{1}{2}\lfloor 2n/3K\rfloor} |g_{i}|  + \sum_{i=j(\lfloor2m/3K\rfloor)+1}^{m} 2e^{-\frac{1}{2}\lfloor 2m/3K\rfloor} |\check{g}_{i}|\\
&+ \sum_{i=1}^{j(\lfloor 2k/3K \rfloor)} 2|g_{i}|  +  \sum_{i=1}^{\check{j}(\lfloor 2k/3K \rfloor)} 2|\check{g}_{i}| + E_{1}.
\end{aligned}
\]
Since we have $j(\lceil i/K\rceil) \le i$ for each $i \ge k/3$, this is bounded by \[
\begin{aligned}
&2 \sum_{i=1}^{k} |g_{i}| + 2\sum_{i=1}^{k} |\check{g}_{i}| + 2\sum_{i=k+1}^{\lfloor2n/3\rfloor} e^{-i/2K } |g_{i}| + 2e^{-\lfloor n/3K\rfloor} \sum_{i=\lfloor 2n/3\rfloor + 1}^{n} |g_{i}| \\
 &+2\sum_{i=k+1}^{\lfloor2m/3\rfloor} e^{-i/2 K } |\check{g}_{i}| + 2e^{-\lfloor m/3K\rfloor} \sum_{i=\lfloor m/3 \rfloor+ 1}^{m} |\check{g}_{i}| + E_{1}.
 \end{aligned}
\]
Moreover, since $\diam(id \cup \tilde{\axes}_{j(\lfloor k/ 3K \rfloor)})$ is bounded by $\sum_{i=1}^{j(\lfloor k/3K \rfloor)} |g_{i}| \le \sum_{i=1}^{k} |g_{i}|$, we conclude \[
\tilde{d}(id, \tilde{p}) \le 4 \sum_{i=1}^{k}  \left(|g_{i}| + |\check{g}|_{i} \right) +2 \sum_{i=1}^{\infty} e^{-\frac{1}{3K}i} \left( |g_{i}| + |\check{g}|_{i} \right) + E_{1}. \qedhere
\]
\end{proof}

\begin{prop}\label{prop:weakDeviation}
Let $p>0$ and let $((\check{Z}_{n})_{n}, (Z_{n})_{n})$ be the (bi-directional) random walk generated by a non-elementary probability measure $\mu$ on $G$ with finite $p$-th moment. Then there exists $K>0$ such that \[
\E_{\mu}\left[ \sup_{n, m} \check{d}(id, [\check{Z}_{m}, Z_{n}])^{p}\right] < K.
\]
In particular, for almost every sample path $(\check{\w}, \w)$, every geodesic in $\{[\check{Z}_{m}, Z_{n}] : m, n > 0\}$ intersects the same bounded metric ball centered at $id$.
\end{prop}

\begin{proof}
Let $K' >0$ be as in Proposition \ref{prop:weakDeviDist}. Let $D_{k}$ be as defined by Equation \ref{eqn:weakDeviDistDK} and let \[
A_{k} := \Big\{ (\check{\w}, \w) : \textrm{$d(id, [\check{Z}_{n}, Z_{m}]) \le D_{k}$ for all $n, m \ge 0$} \Big\}.
\]
Then we have \begin{equation}\label{eqn:weakDeviBound}
\frac{1}{K'} \sup_{n, m} \check{d}(id, [\check{Z}_{m}, Z_{n}] ) \le \sum_{k=1}^{\min\{ m : (\check{\w}, \w) \in A_{m}\}} (|g_{k}|+ |\check{g}_{k}|) + \sum_{k=1}^{\infty} e^{-k/K'} |g_{k}| + \sum_{k=1}^{\infty} e^{-k/K'} |\check{g}_{k}|+1.
\end{equation}
Noting that $|x + y|^{p} \le (2 \max(|x|, |y|))^{p} \le |2x|^{p} + |2y|^{p}$ for $x, y > 0$, it suffices to bound  $\E[I_{i}^{p}]$ for:\[
\begin{aligned}
I_{1} &:= \sum_{k=1}^{\min\{ m : (\check{\w}, \w) \in A_{m}\}} |g_{k}|, \quad &I_{2} &:= \sum_{k=1}^{\min\{ m : (\check{\w}, \w) \in A_{m}\}}  |\check{g}_{k}|, \\
I_{3} &:= \sum_{k=1}^{\infty} e^{-k/K'} |g_{k}|, \quad &I_{4} &:= \sum_{k=1}^{\infty} e^{-k/K'} |\check{g}_{k}|.
\end{aligned}
\]
Observe the following: when $|g_{k}| e^{-k/2K'}$ is bounded by $M$ for all $k$, we have \[
I_{3} = \sum_{k=1}^{\infty} e^{-k/K'} |g_{k}| \le M \sum_{k=1}^{\infty} e^{-k/2K'} \le MC
\]
for $C = (1-e^{-1/2K'})^{-1}$. This means
\[\begin{aligned}
\E[I_{3}^{p} ] &\le C^{p} \E\left[ \left(\max_{k} e^{-k/2K'} |g_{k}|\right)^{p}\right] \le C^{p} \E\left[\sum_{k} (e^{-k/2K'} |g_{k}|)^{p}\right] \\
&= C^{p} \E_{\mu} |g|^{p} \cdot \sum_{k} e^{-kp/2K'} < + \infty.
\end{aligned}
\]
For $I_{1}$, recall the inequality $|t^{p} - s^{p}|\le 2^{p} (|t-s|^{p} + s^{p- n_{p}} |t-s|^{n_{p}})$ for each $t, s \ge 0$ and $p > 0$, where $n_{p} = p$ for $0 \le p \le 1$ and $n_{p} = 1$ otherwise. From this, we have \[
\E[ I_{1}^{p}] \le 2^{p}\sum_{k=0}^{\infty} \E \bigg[\bigg(|g_{k+1}|^{p} + \bigg( \sum_{i=1}^{k} |g_{i}| \bigg)^{p-n_{p}} |g_{k+1}|^{n_{p}} \bigg) 1_{A_{k}^{c}}\bigg].
\]
Since $\Prob\big (A_{k}^{c} \, \big| \, g_{k+1}\big) \le K' e^{-k/K'}$ by Proposition \ref{prop:weakDeviDist}, $\E \left(|g_{k+1}|^{p} 1_{A_{k}^{c}} \right) \le\left( \E_{\mu}|g|^{p}\right) \cdot K'e^{-k/K'}$ is summable. Moreover,  \[\begin{aligned}
 \E \bigg[ \bigg( \sum_{i=1}^{k} |g_{i}| \bigg)^{p-n_{p}} |g_{k+1}|^{n_{p}}  1_{A_{k}^{c}} \bigg] &\le \E \bigg[\bigg( \bigg( \sum_{i=1}^{k} |g_{i}| \bigg)^{p} c^{-n_{p}} + c^{p - n_{p}} \bigg)|g_{k+1}|^{n_{p}}  1_{A_{k}^{c}}\bigg]\\
 &\le c^{-n_{p}} \cdot k^{p+1} (\E_{\mu}|g|^{p})^{2} + K'c^{p-n_{p}} e^{-k/K'} \E_{\mu}|g|^{p} 
 \end{aligned}
 \]
 holds for $c = e^{-k/2pK'}$, which is summable for $k$. Hence $\E[I_{1}^{p}]$ is finite. The remaining terms $\E[I_{2}^{p}]$ and $\E[I_{4}^{p}]$ can be handled in a similar way.
\end{proof}

We obtain an analogous estimate for random walks with finite exponential moment. Because it follows from the proof of Corollary \ref{cor:minDeviExp} given Inequality \ref{eqn:weakDeviBound}, we omit the proof.

\begin{prop}\label{prop:weakDeviationExp}
Let $((\check{Z}_{n})_{n>0}, (Z_{n})_{n>0})$ be the (bi-directional) random walk generated by a non-elementary probability measure $\mu$ on $G$ with finite exponential moment. Then there exists $K>0$ such that \[
\E_{\mu}\left[ \operatorname{exp} \left(\frac{1}{K} \sup_{n, m} \check{d}(id, [\check{Z}_{m}, Z_{n}])\right)\right] < K.
\]
\end{prop}

Using Proposition \ref{prop:weakDeviation}, we obtain the uniform second moment deviation inequality for non-elementary probability measures on the mapping class group. Now employing Theorem 4.1 and 4.2 of \cite{mathieu2020deviation} and the proof of Theorem \ref{thm:LILStrong}, we establish Theorem \ref{thm:CLT}.

To prove Theorem \ref{thm:tracking}, for each $k \ge 0$ and $(\check{\w}, \w) \in \check{\Omega} \times \Omega$ we define the infinite geodesic $\Gamma_{k}(\check{\w}, \w)$ to be a subsequential limit of $\{[\check{Z}_{n+k}, Z_{n-k}] : n=1, 2, \ldots\}$ (which exists by Arzela-Ascoli and the second result in Proposition \ref{prop:weakDeviation}). Note that $\tilde{d}(Z_{k}, \Gamma_{0}(\check{\w}, \w))$ are identically distributed with $\tilde{d}(id, \Gamma_{k}(\check{\w}, \w))$, which are all dominated by $\tilde{d}(id, \sup_{n, m} [\check{Z}_{n}, Z_{m}])$. It follows that \[
\Prob\Big(\tilde{d}(Z_{k}, \Gamma_{0}(\check{\w}, \w)) \ge g(k) \Big)
\]
is summable for some $o(k^{1/p})$-function $g(k)$ ($K \log k$ for some $K>0$, resp.) when the underlying measure has finite $p$-th moment (finite exponential moment, resp.). By Borel-Cantelli, we deduce \[
\lim_{n}\frac{1}{n^{1/p}} \tilde{d}(Z_{n}, \Gamma_{0})= 0 \quad \Big(\limsup_{n} \frac{1}{\log n} \tilde{d}(Z_{n}, \Gamma_{0}) < K\textrm{, resp.}\Big).
\]

We conclude this paper by establishing Theorem \ref{thm:expBdMod}. Recall that Proposition \ref{prop:gouezelRW1} guaranteed the alignment of Schottky axes along  $[o, Z_{n} o]$ \emph{on $X$}, which led to the reverse triangle inequality for distances on $X$ (Lemma \ref{lem:SchottkyAlign}). We  now record the corresponding result for distances on $\tilde{X}$.

\begin{lem}\label{lem:SchottkyAlignWeak}
Let $0< \epsilon<1$ and let $S$ be a long enough, $\epsilon$-constricting $K_{0}$-Schottky set. Let $\tilde{x}, \tilde{y} \in \tilde{X}$ and let $\tilde{\kappa}_{1}, \ldots, \tilde{\kappa}_{N}$ be Schottky axes on $\tilde{X}$. If $(x, \kappa_{1}, \ldots, \kappa_{N}, y)$ is $D_{0}$-semi-aligned, then we have \[
\tilde{d}(\tilde{x}, \tilde{y}) \ge (1-\epsilon) \left( \diam_{\tilde{X}}(\tilde{x}, \tilde{\kappa}_{1}) + \sum_{i=2}^{n} \diam_{\tilde{X}} (\tilde{\kappa}_{i-1}, \tilde{\kappa}_{i}) + \diam_{\tilde{X}}( \tilde{\kappa}_{n}, \tilde{y}) \right)- 4 \sum_{i=1}^{n} \diam_{\tilde{X}} (\tilde{\kappa}_{i}).
\]
\end{lem}

\begin{proof}
Let $M_{0}$ be such that $S \subseteq G^{M_{0}}$. Note that $4 e^{-M_{0} / 2E_{0}} \le \epsilon<1$, which implies \[
\sum_{j\in \Z} e^{-|j-0.5| M_{0}/E_{0}} \le \frac{\epsilon}{4} \cdot \frac{1}{1 - (\epsilon/4)^{2}} \le \frac{\epsilon}{2}.
\]
Now let $\tilde{\kappa}$ be an arbitrary Schottky axis on $\tilde{X}$. Because $M_{0} > 2K_{1}E_{1}$ for $E_{1}$ as in Definition \ref{dfn:SchottkyLongWeak} and $\tilde{\kappa}$ is a $K_{1}$-quasigeodesics by Lemma \ref{lem:alignCoupled}, we have $\diam_{\tilde{X}}(\tilde{\kappa}) > E_{1}$.

Since $(x, \kappa_{1}, \ldots, \ldots, \kappa_{N}, y)$ is $D_{0}$-semi-aligned, they are $E_{0}$-aligned by Proposition \ref{prop:BGIPWitness}. Consequently, $(\tilde{x}, \tilde{\kappa}_{1}, \ldots, \tilde{\kappa}_{N}, \tilde{y})$ is $K_{1}E_{0}$-aligned by Lemma \ref{lem:alignCoupled}. Since the domains of $\kappa_{i}$'s are longer than $M_{0} \ge L_{1}$, we can obtain the points $\tilde{p}_{i}$'s on $[\tilde{x}, \tilde{y}]$ as described in Proposition \ref{prop:weakBGIPConcat}. 

We now have \[\begin{aligned}
\tilde{d}(\tilde{x}, \tilde{y}) &= \tilde{d}(\tilde{x}, \tilde{p}_{1}) + \sum_{i=2}^{n} \tilde{d}(\tilde{p}_{i-1}, \tilde{p}_{i}) + \tilde{d}(\tilde{p}_{n}, \tilde{y})\\
&\ge \left(\diam_{\tilde{X}} (\tilde{x}, \tilde{\kappa}_{1}) -\diam_{\tilde{X}}(\tilde{\kappa}_{1}) - \tilde{d}(\tilde{\kappa}_{1}, \tilde{p}_{1}) \right)+\left(\diam_{\tilde{X}} (\tilde{y}, \tilde{\kappa}_{n}) -\diam_{\tilde{X}}(\tilde{\kappa}_{n}) - \tilde{d}(\tilde{\kappa}_{n}, \tilde{p}_{n})\right) \\
 &+ \sum_{i=2}^{n} \Big(\diam_{\tilde{X}} (\tilde{\kappa}_{i-1}\cup \tilde{\kappa}_{i}) -\diam_{\tilde{X}}(\tilde{\kappa}_{i-1}) - \diam_{\tilde{X}} (\tilde{\kappa}_{i}) - \tilde{d}(\tilde{\kappa}_{i-1}, \tilde{p}_{i-1}) -\tilde{d}(\tilde{\kappa}_{i}, \tilde{p}_{i}) \Big).
 \end{aligned}
 \]
 
 Here, Proposition \ref{prop:weakBGIPConcat} tells us that \[\begin{aligned}
 \sum_{i=1}^{n} \tilde{d}(\tilde{\kappa}_{i}, \tilde{p}_{i})& \le \Big( \diam_{\tilde{X}}(\tilde{x}, \tilde{\kappa}_{1}) + \sum_{i=2}^{n} \diam_{\tilde{X}} (\tilde{\kappa}_{i-1}\cup  \tilde{\kappa}_{i}) + \diam_{\tilde{X}}( \tilde{\kappa}_{n}, \tilde{y}) \Big) \cdot \sum_{j \in \Z} e^{-|j-0.5| M_{0}/E_{1}} + E_{1} n\\
 &\le \frac{\epsilon}{2}  \Big( \diam_{\tilde{X}}(\tilde{x}, \tilde{\kappa}_{1}) + \sum_{i=2}^{n} \diam_{\tilde{X}} (\tilde{\kappa}_{i-1}\cup  \tilde{\kappa}_{i}) + \diam_{\tilde{X}}( \tilde{\kappa}_{n}, \tilde{y}) \Big) + \sum_{i=1}^{n} \diam_{\tilde{X}} (\tilde{\kappa}_{i}).
 \end{aligned}
 \]
Using this, we conclude \[
\begin{aligned}
 \tilde{d}(\tilde{x}, \tilde{y})&\ge (1-\epsilon) \Big( \diam_{\tilde{X}}(\tilde{x}, \tilde{\kappa}_{1}) + \sum_{i=2}^{n} \diam_{\tilde{X}} (\tilde{\kappa}_{i-1} \cup \tilde{\kappa}_{i}) + \diam_{\tilde{X}}( \tilde{\kappa}_{n}, \tilde{y}) \Big)- 4 \sum_{i=1}^{n} \diam_{\tilde{X}} (\tilde{\kappa}_{i}).\qedhere
\end{aligned}
\]
\end{proof}

\begin{cor}[{\cite[Lemma 4.14]{gouezel2022exponential}}]\label{cor:gouezelRW1CorWeak}
Let $\nu$ be a non-elementary probability measure on $G$ and let $(Z_{n})_{n}$ be the random walk generated by $\nu$. Then for each $\epsilon > 0$, there exists $C>0$ such that  \[
\Prob\Big ( |g Z_{n}| \ge (1-\epsilon)|g| - C\,\,\textrm{for all $n\ge 0$}\Big) \ge 1-\epsilon/2 \quad( \forall g \in G).
\]
\end{cor}

\begin{proof}
Let $S$ be a large, long enough and $\epsilon$-constricting $K_{0}$-Schottky set for $\mu$ in $G^{M_{0}}$, for some suitable  $K_{0}, M_{0}>0$. (This determines the constants $K_{1}, D_{0}, \ldots$ as in Definition \ref{dfn:SchottkyLongWeak}.)

As in the proof of Corollary \ref{cor:gouezelRW1Cor}, there exists $N>0$ independent of $g$ such that  \[
\Prob \left( \begin{array}{c} \textrm{there exists $i < N$ such that $\axes_{i}$ is a Schottky axis and}\\ \textrm{ $(g^{-1}o, \axes_{i}(\w), Z_{n}o)$ is $D_{0}$-semi-aligned for each $n \ge N$}\end{array} \right) \ge 1-\epsilon/4.
\]Also, when $(g^{-1} o, \axes_{i}, Z_{n}o)$ is $D_{0}$-semi-aligned, Lemma \ref{lem:SchottkyAlignWeak} implies that \[
\begin{aligned}
\tilde{d}(g^{-1}, Z_{n}) &\ge (1-\epsilon)\left( \diam_{\tilde{X}}\left(g^{-1} \cup  \tilde{\axes}_{i}\right) + \diam_{\tilde{X}} \left( \tilde{\axes}_{i} \cup Z_{n} \right) \right) - 4 \diam_{\tilde{X}}(\tilde{\axes}_{i})\\
&\ge (1-\epsilon) \tilde{d}\big(g^{-1}, \tilde{Z}_{i-M_{0}} \big)- 4 \cdot (K_{1}M_{0} +K_{1}) \\
&\ge (1-\epsilon) |g| - (1-\epsilon)\sum_{j=1}^{i-M_{0}} |g_{j}| - C'' \ge (1-\epsilon)|g| - \sum_{j=1}^{N} |g_{j}| - C'',
\end{aligned}
\]
where $C'' = 4( K_{1} M_{0} + K_{1})$. 
This bound also holds for $n \le N$: \[
\tilde{d}\left(g^{-1}, Z_{n}\right) \ge |g^{-1}| - |Z_{n}| \ge |g| - \sum_{j=1}^{N} |g_{j}|.
\]
Given these, the proof ends by taking large enough $C'>0$ such that \[
\Prob \bigg( \sum_{j=1}^{N} |g_{j}| \ge C' - C''\bigg) \le \epsilon/4. \qedhere
\]
\end{proof}

\begin{proof}[Proof of Theorem \ref{thm:expBdMod}]
As in the proof of Theorem \ref{thm:LDPStrong}, we first take $\epsilon>0$ such that \[
(1-\epsilon)^{4} \lambda(\mu) > L+\epsilon.
\] Let $S$ be a large, long enough and $\epsilon$-constricting Schottky set for $\mu$ with cardinality greater than $10/\epsilon$, and let \[
C'' = 4 \max_{s \in S} \diam_{\tilde{X}} \tilde{\Gamma}^{+}(s).
\] By Proposition \ref{prop:gouezelRWLDP}, there exists a non-elementary probability measure $\nu$, and for each sufficiently large $N$,  a partition $\mathscr{P}_{n, N, \epsilon}$ into $(n, N, \epsilon, \nu)$-pivotal equivalence classes for each $n$ such that \[
\Prob\left( \w : \frac{1}{2} \# \diffPivot^{(n, N, \epsilon)}(\w) \le (1- \epsilon) \frac{n}{2M_{0} N} \right)
\]decays exponentially in $n$. Let $C > 0$ be a constant for $\nu$ provided by Corollary \ref{cor:gouezelRW1CorWeak}: we have \[
\Prob_{\nu^{\ast m}}( h: |gh| \ge (1-\epsilon) |g| - C) \ge 1-\epsilon/2
\]
for each $g \in G$ and each $m > 0$. We now fix an $N$ such that $N > \frac{C+C''}{M_{0} \lambda(\mu) \epsilon}$. 

Let $\mathcal{E}$ be an equivalence class such that $\frac{1}{2} \# \diffPivot^{(n, N, \epsilon)}(\mathcal{E}) \ge (1-\epsilon) \frac{n}{2M_{0} N}$. For each $\w \in \mathcal{E}$ $(o, \axes_{j(1)}, \ldots, \axes_{j'(\#\diffPivot/2)}, Z_{n} o)$ is $D_{0}$-semi-aligned. Lemma \ref{lem:SchottkyAlignWeak} tells us that \[\begin{aligned}
|Z_{n}| &\ge \sum_{i=1}^{\#\diffPivot / 2} \big( (1-\epsilon) \tilde{d} \left(Z_{j(i)}, Z_{j'(i) - M_{0}}\right) - C'' \Big) = \sum_{i=1}^{\#\diffPivot / 2} \Big(|r_{i}| - C'' \Big) \quad (r_{i} := g_{j(i) +1} \cdots g_{j'(i) - M_{0}}).
\end{aligned}
\]
Since $r_{i}$'s are i.i.d. with \[
\E\big[|r_{i}| - M\big] \ge (1-\epsilon) \E_{\mu^{\ast 2M_{0} N}}\big[(1-\epsilon)|g| - C\big]  - C''\ge (1-\epsilon)^{3}  \cdot 2M_{0}N\lambda(\mu),
\]
we can find $K'>0$ not depending on $n$ such that \[
\Prob\left( \left.|Z_{n}|\le(1-\epsilon)^{4} \lambda n  \, \right| \, \mathcal{E}\right)\le K' e^{-n/K'} \quad(\forall n > 0).
\]
Summing up this conditional probability, we obtain the desired exponential bound.
\end{proof}

\medskip
\bibliographystyle{alpha}
\bibliography{random1}

\end{document}